\definecolor{refkey}{gray}{.75}
\definecolor{labelkey}{gray}{.5}
\newtheorem{Theorem}{Theorem}[section]
\newtheorem{Fact}[Theorem]{Fact}
\newtheorem{Lemma}[Theorem]{Lemma}
\newtheorem{Proposition}[Theorem]{Proposition}
\newtheorem{Corollary}[Theorem]{Corollary}
\newtheorem{Remark}[Theorem]{Remark}
\newtheorem{Definition}[Theorem]{Definition}
\newtheorem{Warning}[Theorem]{Warning}
 \definecolor{darkgreen}{rgb}{0,0.4,0}
\definecolor{light}{gray}{0.9}
\newcommand{\cA}{\ensuremath{\mathcal A}}
\newcommand{\cB}{\ensuremath{\mathcal B}}
\newcommand{\cC}{\ensuremath{\mathcal C}}
\newcommand{\cD}{\ensuremath{\mathcal D}}
\newcommand{\cE}{\ensuremath{\mathcal E}}
\newcommand{\cG}{\ensuremath{\mathcal G}}
\newcommand{\cH}{\ensuremath{\mathcal H}}
\newcommand{\cL}{\ensuremath{\mathcal L}}
\newcommand{\cN}{\ensuremath{\mathcal N}}
\newcommand{\cS}{\ensuremath{\mathcal S}}
\newcommand{\cV}{\ensuremath{\mathcal V}}
\newcommand{\cX}{\ensuremath{\mathcal X}}
\newcommand{\bbB}{{\ensuremath{\mathbb B}} }
\newcommand{\bbE}{{\ensuremath{\mathbb E}} }
\newcommand{\bbG}{{\ensuremath{\mathbb G}} }
\newcommand{\bbI}{{\ensuremath{\mathbb I}} }
\newcommand{\bbN}{{\ensuremath{\mathbb N}} }
\newcommand{\bbP}{{\ensuremath{\mathbb P}} }
\newcommand{\bbQ}{{\ensuremath{\mathbb Q}} }
\newcommand{\bbR}{{\ensuremath{\mathbb R}} }
\newcommand{\bbV}{{\ensuremath{\mathbb V}} }
\newcommand{\bbZ}{{\ensuremath{\mathbb Z}} }
\newcommand{\lrgh}{\longleftrightarrow}
\newcommand{\rgh}{\rightarrow}
\newcommand{\mamma}{\magenta{RN}$[\b]$}
\newcommand{\mammaL}{\magenta{RN}$[\b,\ell]$}
\newcommand{\V}{V_{\b,\ell}^\o}
\newcommand{\giggio}{$\cG[\z, \b]$}
\newcommand{\giggioest}{$\cG[\z,\b](\o)$}
\let\a=\alpha \let\b=\beta   \let\d=\delta  \let\e=\varepsilon
 \let\g=\gamma     \let\k=\kappa  \let\l=\lambda
      \let\o=\omega    \let\p=\pi  
\let\r=\rho  \let\s=\sigma \let\t=\tau   
 \let\x=\xi \let\z=\zeta
\let\D=\Delta     \let\L=\Lambda 
\let\O=\Omega      
\newcommand{\rosso}{\textcolor{black}} 
\newcommand{\vvv}{\textcolor{black}} 
\newcommand{\blu}{\textcolor{black}}
\newcommand{\orc}{\textcolor{black}} 
\newcommand{\rot}{\textcolor{black}} 
\newcommand{\magenta}{\textcolor{black}} 
\newcommand{\verde}{\textcolor{black}}
\newcommand{\da}{\downarrow}
\newcommand{\be}{\begin{equation}}
\newcommand{\en}{\end{equation}}
\newcommand{\gino}{${\rm PPP}[\rho]$, ${\rm PPP}[\rho,\nu]$}
\newcommand{\cv}{\rosso{\cV_\alpha}}
\newcommand{\cvp}{\rosso{\cV^+_\alpha}}
\newcommand{\vpz}{\vvv{\bbP^{\hspace{0.013cm}0} }}
\newcommand{\hvpz}{\vvv{{\hat \bbP}^{\hspace{0.013cm}0} }}
\newcommand{\bvpz}{\vvv{{\bar\bbP}^{\hspace{0.013cm}0} }}
\newcommand{\vqz}{\vvv{\bbQ^{\hspace{0.013cm}0} }}
\newcommand{\pn}{\rosso{{\hat\bbP}^{\hspace{0.03cm}n}}}
\author[A.~Faggionato]{Alessandra Faggionato}
\address{Alessandra Faggionato.
  Dipartimento di Matematica, Universit\`a di Roma `La Sapienza'
  P.le Aldo Moro 2, 00185 Roma, Italy}
\email{faggiona@mat.uniroma1.it}
\title[Mott's  law]{Mott's law for the  \magenta{v.r.h.} random resistor network and for  Mott's random walk}
\begin{document}

\begin{abstract}
  Mott's  variable range hopping (v.r.h.) is  the phonon-induced hopping of electrons in disordered solids (\rot{such} as doped semiconductors) within the regime of strong Anderson localization. It was introduced by N.~Mott  to explain the anomalous low temperature conductivity decay in dimension $d\geq 2$, corresponding  now to the  so called Mott's law. We provide a rigorous derivation of this Physics law for two effective models of Mott v.r.h.: the  random resistor network for \magenta{v.r.h. of \cite[Section~IV]{AHL}} and Mott's random walk. We also determine the  constant multiplying the power of the inverse temperature in the exponent in Mott's law, which was an open problem also on a  heuristic level. 
  

\smallskip

\noindent {\em Keywords}: marked simple point process, 
Poisson point process,    Mott's random walk,   random resistor network \magenta{for v.r.h.}, Mott's law, percolation, stochastic homogenization. 

\smallskip

\noindent{\em AMS 2010 Subject Classification}: 
60G55, 
82B43, 
82D30 

\end{abstract}

\maketitle

\section{Introduction}

Mott's  variable range hopping is a mechanism of phonon--assisted electron transport taking place in amorphous solids (\rot{such as} doped semiconductors)  in the regime of strong Anderson localization \cite{AHL,MA,MD,POF,Sa,SE}. It has been introduced by N.~Mott in order to explain the anomalous 
 decay of the conductivity  at low temperature \cite{Mott,Mott_Nob}.  
 
Calling $x$ the  impurity positions  in the doped semiconductor, the electron Hamiltonian has exponentially localized quantum eigenstates with localization centers $x$ and corresponding energy $E_{x}$  close to the Fermi level, set equal to zero in what follows. At low temperature phonons  induce transitions between the localized eigenstates, the rate of which can be calculated by the Fermi golden rule \cite{MA,SE}. In the simplification of spinless electrons,   the resulting rate for an electron to  hop from $x$ to the unoccupied site $y$ is then given by  (cf. \cite[Eq.~(3.6)]{AHL}) 
\be\label{tunnel} \exp\Big\{-\frac{2}{\g}  |x-y| - \beta \{ E_{x} -E_{y} \}_+\Big\} \,.\en
In \eqref{tunnel} $\g$ is the localization length, $\beta=1/k T $ is the inverse temperature and $\{a\}_+:= \max\{0,a\}$.

The above set \rosso{of impurity locations} can be modeled by a   random simple point process (SPP), marked by  random variables $E_x$ (called energy marks) which are usually taken i.i.d. with some common distribution $\nu$. The physically relevant distributions for inorganic media are of the form  $\nu(dE)= c\, |E|^{\alpha} dE$ with  finite support  $[-A,A]$  for some exponent $\a\geq 0$ \cite{SE}. Mott's law \cite{Mott,Mott_Nob,MD,SE} then  predicts  that, for $d\geq 2$,  the \rot{conductivity} matrix $\s(\b)$ of the medium decays to zero when  $\b \to+ \infty$ as 
\be\label{mottino}
\s(\b) \approx  A(\b)\exp \bigl\{ -\k \, \beta ^\frac{\a+1}{\a+1+d}\bigr\}\,,
 \en
 where the prefactor matrix $A(\b)$ exhibits  a negligible $\b$--dependence (to simplify here the exposition, we restrict just now to isotropic media).
 Strictly speaking, Mott derived  the above asymptotics for $\a=0$, while  Efros and Shklovskii derived its extension. 
  For $d=1$, \rot{according to the marked SPP,  the conductivity is zero or} presents  an Arrenhius--type decay  as $\b \to+ \infty$  for all $\a\geq 0$ \cite{Kur}, i.e.\ 
 \be\label{mottino1}
 \s(\b)\approx  A(\b)\exp \bigl\{ -\k \b\}\,.
 \en

Due to   localization one can treat the above electron conduction by a hopping process of classical particles (see also \cite{ABS,BRSW}), thus \rot{leading   to} a  simple exclusion process due to the  Pauli blocking. \rot{This exclusion process takes place in a random environment, is non-gradient with non-symmetric rates, and has infinite range, thus making it technically challenging.}
  The reversible measure of the exclusion process is  the Fermi-Dirac distribution, i.e. the Bernoulli product probability measure  such that the probability of having a particle at $x$ is proportional to $e^{-\b (E_x-c)}$, $c\in \bbR$ being the chemical potential. \rot{Since we have zero Fermi energy, the relevant $c$ is given by $c=0$}. Effective simplified models   are  given by the  random resistor network  for v.r.h. \magenta{(cf.~\cite[Section~IV]{AHL} and \cite{POF,Sa,SE})}   and  by Mott's random walk \cite{FSS}.
The \magenta{v.r.h.} random resistor network has nodes $x$ and,   between any  pair of nodes $x\not=y$, it has  an electrical filament of  conductivity 
\be\label{condu}
c_{x,y}:=\exp\Big\{ - \frac{2}{\gamma} |x-y| -\frac{\b}{2} ( |E_x|+ |E_y|+ |E_x-E_y|) \Big\}\,.
\en
Mott's random walk is the continuous--time random walk with \rosso{states given by the impurity locations} and probability rate for a jump from $x$ to $y$ given by \eqref{condu}. We point out that the r.h.s. of \eqref{condu}   corresponds to the leading term for $\b$ large of \eqref{tunnel} multiplied by the probability in the Fermi-Dirac distribution that $x$ and $y$ are, respectively,   occupied and unoccupied  by an electron 
(see \rot{\cite[Section~III]{AHL} and in particular} \cite[Eq.~(3.7)]{AHL}). A more detailed discussion on the \magenta{v.r.h.} resistor network and Mott's random walk  is provided in Sections \ref{ssec_MA} and \ref{sec_mott_rw}.

The original  derivation of the  laws \eqref{mottino} and \eqref{mottino1} is rather heuristic.  More robust    arguments  have been proposed in the Physics  literature (see \cite{AHL,Po,POF,Sa,SE}), in particular for the case $d\geq 2$.  Rigorous bounds in agreement with \eqref{mottino} and \eqref{mottino1} have beed derived in \cite{CF1,Fhom2,FM,FSS} (see Section~\ref{sec_mott_rw}). The characterization of the constant $\k$ was an open problem, even at heuristic level (see \cite[Chapter~5]{POF}).

\smallskip
\rot{Let us describe our main contributions presented in Sections~\ref{MR_PPP} and~\ref{MR_univ}: Theorem~\ref{teo1}, Corollary~\ref{luna}, Theorem~\ref{parataUB}, Theorem~\ref{parataLB}, Corollary~\ref{immenso} and Theorems~\ref{paradiso}, \ref{supersanto}, \ref{santantonio}.}
  First  of all we point out that, by means of homogenization theory,  under very general assumptions in \cite{Fhom2}
we proved  that the conductivity matrix $\s(\b)$ of the \magenta{v.r.h.} random resistor network equals the asymptotic diffusion matrix $D(\b)$  of Mott's random walk times the intensity $\rho$ of the SSP  (i.e. the mean impurity density). Hence, all results presented below for $\s(\b)$ hold also for $D(\b)$. \magenta{At cost of a rescaling   but w.l.o.g., in \eqref{condu} we can take $\g=2$ and we can replace $\b/2$ by $\b$, hence in the rest $c_{x,y}:=\exp\{ - |x-y| -\b ( |E_x|+ |E_y|+ |E_x-E_y|) \}$.}

\smallskip

 In Section \ref{MR_PPP} we  take $d\geq 2$ and  consider as  marked SPP  a homogeneous Poisson point process (\magenta{briefly}, PPP)  with intensity $\rho$ and i.i.d. energy marks   with common distribution $\nu$ which, in some neighborhood of the origin,
 has density proportional to $|E|^\a$ or $E^\a \mathds{1}(E\geq 0)$, where $\a\geq 0$. In particular, for some $C_{\rot{\nu}}>0$, near the origin $\nu$
    has   the form $(\a+1) C_{\rot{\nu}}^{-\a-1}2^{-1} |E|^\a dE$ (case $\nu\in \cv$)  or $(\a+1)C_{\rot{\nu}}^{-\a-1} E^\a \mathds{1}(E\geq 0) dE$  (case $\nu\in \cvp$).

$\bullet$ {\bf Theorem~\ref{teo1}, Corollary~\ref{luna}.} We first consider 
  the critical conductance $c_c(\beta, \rho,\nu)$. According to \cite{AHL}  $c_c(\beta, \rho,\nu)$ is such that the graph obtained from the \magenta{v.r.h.} resistor network by keeping only the filaments  $\{x,y\}$ 
with conductivity $c_{x,y}>C$ a.s. does not percolate if $C>c_c(\beta, \rho,\nu)$, while it  a.s. percolates if $c_c(\beta, \rho,\nu)>C$.  In Theorem \ref{teo1} and its Corollary \ref{luna} we show that the limit 
\[ \chi(\rho, \nu):=\lim _{\b\to +\infty} \b^{- \frac{\a+1}{\a+1+d} } \ln c_c(\b,\rho,\nu )\,\] exists and compute it \magenta{(see \eqref{eccolino})}. 

$\bullet$ {\bf Theorems~\ref{parataUB} and \ref{parataLB}, Corollary~\ref{immenso}.}
We then prove Mott's law \eqref{mottino} for $\nu\in \cvp$ (\rot{cf.~ Corollary  \ref{immenso}}), showing \magenta{that} the constant $\k$ in \eqref{mottino} indeed equals $- \chi(\rho, \nu)$. It turns out that 
\be\label{eccolino}
-\k=\chi(\rho,\nu) =\rot{-}\bigl( {\l_c^+(\a)   C_{\rot{\nu}}^{\a+1}/\r}\bigr)^{\frac{1}{\a+1+d}}\,,
\en
where $\l_c^+(\a)\in (0,+\infty) $ is a constant of percolation type  depending  only from $\a$ and $d$ (although the dependence on $d$ does not appear in the notation). It has the following characterization.  Consider the \magenta{v.r.h.} resistor network on a PPP with density $\l$ and  i.i.d. energy  marks   in $[0,1]$ having  distribution $\nu_{1,\a}^+$  with  density proportional to $E^\a$ on $[0,1]$. Afterwards keep only filaments with conductivity at least $e^{-1}$ at inverse temperature $\b=1$. Then  the resulting network a.s. percolates for $\l>\l_c^+(\a)$ and a.s. does not percolate for $\l<\l_c^+(\a)$. 
\magenta{Once the preprint \cite{FH}   is validated, then, as stated in Corollary  \ref{immenso}, Mott's law \eqref{mottino} can be considered proved also for  $\nu\in \cV_\a$. In this case \eqref{eccolino} is valid with the constant $\l_c^+(\a)$ replaced by $\l_c(\a)$. The definition of $\l_c(\a)$ is the same of  $\l_c^+(\a)$  with exception that the energy marks have  value in $[-1,1]$  with  distribution $\nu_{1,\a}$ having  density proportional to $|E|^\a$ on $[-1,1]$. Finally, we point out that Corollary~\ref{immenso} comes from the asymptotically matching upper   and  lower bounds for the conductivity provided respectively by Theorem~\ref{parataUB} and Theorem~\ref{parataLB}.}

 \smallskip
 
 \rot{Experiments suggest that  Mott's law is universal, in the sense that it holds in a large class of disordered  media. One could imagine to model impurities and energies not by a marked PPP as above, which is also reasonable because impurities cannot be arbitrarily close (one could use e.g. a marked Delone process). In Section~\ref{MR_univ} we investigate  the emergence of Mott's law for more general marked SPPs, with i.i.d. marks.}
 
 $\bullet$ \rot{{\bf Theorem~\ref{paradiso}}}.
\rot{Also dealing  with other marked SPP,  PPPs play anyway a fundamental role. 
Indeed, 
one expects  that,}  at inverse temperature $\b$, just  points $x$  with marks  $E_x$ in a suitable window $[ -E(\b), E(\b)]$ give the main contribution to transport.
In Theorem \ref{paradiso} we  show the following. Let us  start with a generic 
ergodic stationary SPP with i.i.d. energy marks having distribution $\nu$  which asymptotically behaves as $\nu^+_{1,\a}$ or $\nu_{1,\a}$ while approaching  the origin. Then the rescaled set of marked points $\left( x/(\b E(\b)), E_x /E(\b)\right)$  with $|E_x|\leq  E(\b)$ 
converges to a  PPP with some intensity  $\l$ and with  i.i.d. energy marks 
having distribution respectively  $\nu^+_{1,\a}$ and $\nu_{1,\a}$ (the same random geometric structure entering in the definition of $\l_c^+(\a)$ and $\l_c(\a)$).

$\bullet$ \rot{{\bf Theorems~\ref{supersanto} and  \ref{santantonio}}}.
\rot{These theorems  give sufficient conditions to have lower and upper bounds in agreement with Mott's law for generic marked SPPs. In Section~\ref{universo} we motivate why these bounds should asymptotically match (for $\b\to+\infty$) for many marked SPPs with i.i.d. marks in view of Theorem~\ref{paradiso}.  We also derive  an expected  formula for  $\k$ (see \eqref{carezza}). We point out that Theorems~\ref{parataUB} and  Theorem~\ref{parataLB} for marked PPPs have been obtained by applying  Theorems \ref{supersanto} and  \ref{santantonio}, respectively.}\\

Our proof of the above  Mott's law relies also  on our previous results in  percolation \magenta{\cite{FH,FagMim1,FagMim2}} (obtained in collaboration \magenta{with I.~Hartarsky and with}  H.A.~Mimun) and stochastic  homogenization \cite{Fhom2}. Having expressed $\s(\b)$   in terms of the effective homogenized matrix in \cite{Fhom2}, we can use the variational characterization of the latter to obtain upper bounds on $\s(\b)$ via suitable test functions. On the other hand,  $\s(\b)$ \rot{is} the limit of the suitably rescaled conductivity of finite volume resistor networks as described in Section \ref{ssec_MA}. \rot{Hence} we can use Rayleigh's monotonicity law to lower bound 
$\s(\b)$ by thinning the finite volume resistor networks  \rot{keeping just some vertex-disjoint linear chains of electrical filaments with  enough conductance. Indeed, for the latter we can easily lower bound the effective conductivity by standard circuit theory. The procedure is  different} (and much refined) from the one in \cite{FM,FSS} and  now our upper and lower bounds on $\s(\b)$ asymptotically match as $\b\to+\infty$ (thanks also to suitable rescaling arguments). The test functions and thinned resistor networks used in \cite{FM,FSS}, respectively,  are of different nature, not optimal, while the ones presented here are inspired by the paradigm of Critical Path Analysis \cite{AHL} and require a detailed knowledge of a suitable percolation model investigated separately in   \magenta{\cite{FH,FagMim1,FagMim2}}.
The overall derivation of Mott's law  has required (and motivated)   the  results in \magenta{\cite{Fhom2,FH,FagMim1,FagMim2}} (although we treated there larger classes of models).

\smallskip

As next steps \rot{we would like to derive the scaling limit  \eqref{mottino1} for $d=1$ and investigate a larger class  of marked  SPPs (not given by PPPs) to which Theorems~\ref{supersanto} and~\ref{santantonio} can be applied to derive Mott's law}.

\smallskip

{\bf Outline of the paper}. In Section \ref{ssec_marked_SPP} we recall some basic notions of marked simple point processes on $\bbR^d$. In Section \ref{palmato} we introduce the Palm distribution 
and the effective homogenized matrix. In Section \ref{ssec_MA} we describe the \magenta{v.r.h.} random resistor network  and recall the scaling limit of its directional conductivity obtained in \cite{Fhom2}. In Section \ref{sec_mott_rw} we describe  Mott's random walk and recall some previous results  from \cite{CF1,Fhom1,Fhom2,FM,FSS}. In Section \ref{grafene} we  focus on the part of the \magenta{v.r.h} resistor network with conductances lower bounded by a given threshold and introduce the critical conductance threshold. Sections \ref{MR_PPP} and \ref{MR_univ} contains our main results. 
%
The other sections \magenta{and the appendixes} are devoted to the proofs.

We point out that several intermediate results are valid for all dimensions $d\geq 1$. We will state explicitly when we will restrict to $d\geq 2$.
%
\section{Some preliminaries on marked simple point processes on $\bbR^d$}\label{ssec_marked_SPP} 
In what follows, given a topological space $\cX$, we denote by $\cB(\cX)$ its Borel $\s$--algebra. 
 Given $A\in \cB(\bbR^d)$ we denote by $\ell(A)$ the Lebesgue measure of $A$. 
Given a measure $\mathfrak{m}$ and a function $f$ defined on the same space, we write $\mathfrak{m}[f]$ for the integral of $f$ w.r.t.~$\mathfrak{m}$.

 We denote by $\hat \O$ the space of  locally finite subsets of $\bbR^d$. Trivially, elements of $\hat \O$ are countable sets. Moreover, each $\hat \o \in \hat \O$ can be identified with the atomic measure $\sum_{x\in \hat \o }\d_x$, which  gives finite mass to bounded Borel subsets of $\bbR^d$. This allows to endow $\hat \O$ with the standard metric defined on the space of boundedly finite measures on $\bbR^d$ (cf. \cite[Eq.~(A2.6.1) in Appendix~A2.6]{DV}). The precise definition of this metric is not necessary for the rest and  we do not recall it here, but we will recall some of its properties when needed. We just mention here that a sequence $(\hat\o_n)_{n\geq 1}$  converges to $\hat \o$ in $\hat \O$ if and only if $\lim_{n\to +\infty}\hat\o_n[f]= \hat \o[f]$ for all   continuous  functions $f: \bbR^d\to \bbR$ with bounded  support (see  \cite{DV}[Thm.~A2.6.II]). 
 
 Due to  the above interpretation of $\hat \o$ as atomic measure, 
  given  $A\subset \bbR^d$,  $\hat \o (A)$ equals  the cardinality of $\hat \o \cap A$.
  One can prove that the Borel $\s$--algebra $\cB(\hat\O)$ is then generated by the sets $  \{\hat \o \in \hat \O\,:\, \hat \o (A) =n\} $ with $A\in \cB(\bbR^d)$ and $n \in \bbN$ (combine Prop.~7.1.III and  Cor.~7.1.VI in \cite{DV}).  From now on, $\hat \O$ will be thought of  as a \rot{set of measures} with $\s$--algebra of  measurable sets given by $\cB(\hat \O)$.

   Given $x\in\bbR^d$ and a  configuration $ \hat \o\in\hat \O$, we define the translated configuration $\t_x\hat \o$  as
\be\label{traslare}
\t_x\hat \o:= \{ x_i-x \}\; \text{ if }\; \hat \o=\{ x_i\}\,.
\en
\rosso{(above, and in the rest, we write $\{x_i\}$ for $\{x_i:i\in I\}$, omitting the index set).}
Each translation $\t_x:\hat\O\to \hat\O$ is a measurable map.
 A set $A\in \cB(\hat\O)$ is called \emph{translation invariant} if $A=\t_x A$ for all $x\in \bbR^d$.
 
 \begin{Definition}[SPP] \label{def_spp}
 A \emph{simple point process} (\rot{briefly} SPP) $\hat \xi$ on $\bbR^d$ is a random element of $\hat \O$. 
 $\hat \xi$ is called \emph{stationary} if its law $\hat \bbP$ satisfies $\hat \bbP( \t_x A)=\hat \bbP(A)$ for all $A\in \cB(\hat \O)$ and $x\in \bbR^d$. In this case, the \emph{intensity} of $\hat \xi$ is defined as $\rho:=\hat \bbE\bigl[ \hat\o([0,1]^d) \bigr]\in [0,+\infty]$, $\hat \bbE$ being the expectation w.r.t. $\hat \bbP$.
$\hat \xi$  is called \emph{ergodic} if it is stationary  and  $\hat \bbP(A)\in \{0,1\}$ for any translation invariant set $A\in \cB(\hat \O)$. 
\end{Definition}
For a stationary SPP with intensity $\rho$ it holds  $\hat \bbE\bigl[ \hat\o(A) \bigr]= \rho \ell(A)$ for any Borel set $A\in \cB (\bbR^d)$ \cite{DV}.

\medskip
In order to introduce the marked simple point processes,  
 we denote by $\O$ the space of subsets $\o=\{ (x_i, E_{x_i}) \}\subset\bbR^d\times \bbR$ such 
 that $\{x_i\}$ is a (countable) locally finite subset of $\bbR^d$ (i.e.~$\{x_i\}\in \hat \O$).
  The real number $E_{x_i}$ is called the \emph{energy mark} of the site $x_i$.
We will often  identify $\o=\{ (x_i, E_{x_i}) \} \in \O$ with the atomic measure $\sum_i \d_{(x_i,E_{x_i})}$. Note that, given  $A\subset \bbR^d\times \bbR$,   $\o(A)$  is the number of points of $\o$ in $A$.

If we denote by $\hat \O_{d+1}$ the metric space defined as $\hat \O$ above but with $\bbR^d$
replaced by $\bbR^{d+1}=\bbR^d\times\bbR$, one can prove that $\O$ is a Borel subset of $\hat \O_{d+1}$\footnote{$\O=\cap_{n=1}^\infty\magenta{(} \cup_{m=1}^\infty B_{n,m}\magenta{)}$ where $B_{n,m}$ is the set of 
 $\o \in \hat \O_{d+1}$ such   that $\o$ contains at most one point in the sets $(z,0) + [-1/m,1/m)^{d-1}\times \bbR$ for all $z\in [-n,n)^{d-1}\cap (\bbZ^{d-1}/m)$.}.
We consider $\O$ with the metric and topology induced from $\hat \O_{d+1}$. We  then get that  the Borel $\s$--field  $\cB(\O)$  is generated by the sets $\{ \o\in \O\,:\, \o(A)=n\}$, where    $A$ and $n$ vary respectively in $\cB( \bbR^d\times \bbR) $  and $\bbN$. Moreover,  a  sequence $(\o_n)_{n\geq 1}$ in $\O$ converges to $ \o\in \O$ if and only if $\lim_{n\to +\infty} \o_n[f]= \o[f]$ for all   continuous  functions $f: \bbR^d\times \bbR \to \bbR$ with bounded support.

%

    Given $x\in\bbR^d$ and  $ \o\in \O$, we define the translated configuration $\t_x\o$  as
\be\label{traslare_bis}
\t_x\o:= \{ (x_i-x, E_{x_i}) \}\; \text{ if }\; \o=\{ (x_i,E_{x_i})\}\,.
\en
 Finally, 
 we introduce the spatial projection
\be
\O \ni \o \mapsto \widehat{\o} \in \hat \O \text{ where } \widehat{\o}:=\{x_i\} \text{ if }
\o = \{(x_i, E_{x_i})\}\,.
\en
 In particular, we will frequently  write $\o=\{ (x, E_x)\,:\, x\in \widehat \o\}$. Both the translation  $\t_x:\O\to \O$ and the projection $\O\ni \o \mapsto \widehat{\o} \in \hat \O$ are measurable functions, as can be easily checked.

\begin{Definition}[Marked SPP] \label{def_mspp}
A \emph{marked simple point process} (\magenta{briefly}, marked SPP) $\xi$  on $\bbR^d$ is a random element of  $\O$.  $\xi$ is called \emph{stationary} if its law $\bbP$ satisfies $\bbP( \t_x A)= \bbP(A)$ for all $A\in \cB(\O)$ and $x\in\bbR^d$. In this case, the intensity of $\xi$ is defined as $\rho:=\bbE\bigl[ \widehat\o([0,1]^d) \bigr]\in [0,+\infty]$, $\bbE$ being the expectation w.r.t. $\bbP$.
$\xi$  is called \emph{ergodic} if it is stationary  and if $\bbP(A)\in \{0,1\}$ for any translation invariant set $A\in \cB(\O)$ \rosso{(i.e.~such that $\t_x A=A$ for all $x\in \bbR^d$).} 
 \end{Definition}

\begin{Warning} In general, the symbol $\; \hat{}$ will refer to SPPs. Space of configurations, generic configurations, SPPs, laws of SPPs will 
usually be denoted respectively by $\hat\O$, $\hat \o$, $\hat \xi$, $\hat \bbP$.
Note that the spatial projection of $\o\in \O$ is denoted by $\widehat{\o}$, while a generic element of $\hat \O$ is denoted by $\hat{\o}$. The context will also allow  to easily distinguish between   $\widehat{\o}$ and $\hat \o$.
\end{Warning}

The marked SPPs we are interested \rot{in} will be obtained  as
  $\nu$--randomization of a SPP \cite[\magenta{p.~225}]{Kal}. Let us recall  this procedure.

\begin{Definition}[$\nu$-randomization $\hat \bbP_\nu$] \label{def_randomization}
Consider   a SPP $\hat \xi$ on $\bbR^d$ with law  $\hat\bbP$ on $(\hat\O, \cB(\hat\O))$. Given a realization of $\hat \xi$, mark each point by i.i.d. random variables  (independently from all the rest) with common distribution $\nu$, and call $\xi$ the resulting marked SPP. 
We say that the law $\hat \bbP_\nu$ of 
  $\xi$ is the  $\nu$-randomization of $\hat \bbP$ and that   $\xi$ is the $\nu$--randomization of $\hat \xi$.  
 \end{Definition}
 
 We now recall the notion of $p$--thinning, which will be crucial in what follows:
 \begin{Definition}[$p$--thinning $\hat \bbP_p$] \label{def_thinning} Given a SPP  $\hat \xi$  on $\bbR^d$ with law $\hat \bbP$ and given $p\in [0,1]$, the $p$--thinning of $\hat \xi$ is the SPP  obtained  as follows. Given a realization $\hat\o$ of $\hat \xi$, for each point in $\hat \o$ one tosses  a biased coin with probability $p$ to give head \rot{(all coin tosses are independent)}: if the coin gives head the point is kept, otherwise it is deleted. We denote by $\hat\bbP_p$  the law of the $p$-thinning of $\hat\xi$. We will \magenta{briefly} say that $\hat\bbP_p$ is  the $p$-thinning of $\hat \bbP$.
 \end{Definition}

\subsection{Poisson point processes}
A special role in Mott's variable range hopping is played by Poisson point processes.

\begin{Definition} [\gino]
\label{def_PPP} We denote by ${\rm PPP}[\rho]$ the homogeneous Poisson point process  on $\bbR^d$ with intensity  $\rho$ and we denote by  ${\rm PPP}[\rho,\nu]$ the 
 $\nu$--randomization of ${\rm PPP}[\rho]$ (having automatically intensity $\rho$). We use the same symbols also for the associated laws, i.e.~${\rm PPP}[\rho](A)$ is the probability of the event $A\in \cB (\hat \O)$ for the homogeneous Poisson point process with intensity $\rho$, and similarly for  ${\rm PPP}[\rho,\nu](A)$ with $A\in \cB(\O)$.
 \end{Definition}
  We  recall that a SPP $\hat \xi$ on $\bbR^d$ is a {\rm PPP}$[\rho]$ whenever
  (i) for mutually disjoint Borel subsets $A_1,A_2, \dots, A_k$ in $ \bbR^d$, the random variables $\hat \xi(A_1),\hat \xi(A_2),\dots, \hat \xi(A_k)$ are independent;
  (ii) for any bounded   set  $A\in \cB(  \bbR^d)$, $\hat \xi(A)$ is a Poisson random variable with parameter $\r\, \ell(A)$.
In particular, it holds  $ E[ \xi(A)] =\hat\bbE[\hat \o(A)]=\rho \ell(A)$ for any $A \in \cB(\bbR^d)$.
 We  recall that {\rm PPP}$[\rho]$ and {\rm PPP}$[\rho,\nu]$ are stationary and ergodic \rosso{(see e.g.~\cite[Prop.~8.13]{LP}, whose proof can be generalized to the marked case)}.

The families of PPP's and marked PPP's are closed w.r.t. some fundamental operations, which play a central role in Mott's law:
\begin{Proposition}\label{cotechino}
The following holds:
\begin{itemize}
\item[(i)] The $p$--thinning of 
  ${\rm PPP}[\rho]$ is  given by 
${\rm PPP}[p \rho]$. 
\item[(ii)] By sampling $\hat\o\in \hat\O$ according to  ${\rm PPP}[\rho]$, the SPP obtained by rescaling each $x\in \hat \o $ into $x/\ell$ for some fixed $\ell>0$ has law ${\rm PPP}[\ell^d \rho]$.
\item[(iii)] Fixed $\g>0$, by sampling $\o\in \O$ according to  ${\rm PPP}[\rho,\nu]$,    the marked SPP $\o_\gamma:=\{(x,E_x)\,:\, |E_x|\leq \g \}$ has  law ${\rm PPP}[ p  \rho,\nu_\g]$, where $p:=\nu([-\g, \g])$ and $\nu_\g$ is the probability measure $\nu$ conditioned to the event $[-\g,\g]$.
    \end{itemize}
\end{Proposition}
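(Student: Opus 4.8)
The plan is to treat the three items by a common recipe: apply the stated operation to a Poisson configuration and then check the two characterizing properties recalled right after Definition~\ref{def_PPP}, namely (a) independence of the counts on pairwise disjoint Borel sets and (b) a Poisson law with the correct parameter for the count on a bounded Borel set. For each operation I would first record, as a routine preliminary, that it is a measurable map on the relevant configuration space so that its output is again a (marked) SPP: for the $p$--thinning this is built into Definition~\ref{def_thinning}, and for the dilation $\hat\o\mapsto\{x/\ell\}$ and the restriction $\o\mapsto\{(x,E_x):|E_x|\le\g\}$ it follows at once from the generators of $\cB(\hat\O)$ and $\cB(\O)$ described in Section~\ref{ssec_marked_SPP}. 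I would prove (i) and (ii) directly, and then deduce (iii) from (i).

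For (i), let $\hat\xi_p$ be the $p$--thinning of $\hat\xi\sim{\rm PPP}[\rho]$. Given pairwise disjoint $A_1,\dots,A_k$, each $\hat\xi_p(A_j)$ is a function of the pair $(\hat\xi(A_j),\,\text{coins attached to the points in }A_j)$, and these pairs are mutually independent, giving (a). For bounded $A$, conditioning on $\hat\xi(A)=n$ makes $\hat\xi_p(A)$ a $\mathrm{Binomial}(n,p)$ variable, so
\[
\bbE\bigl[z^{\hat\xi_p(A)}\bigr]=\bbE\bigl[(1-p+pz)^{\hat\xi(A)}\bigr]=\exp\bigl(p\rho\,\ell(A)(z-1)\bigr),
\]
i.e.\ $\hat\xi_p(A)\sim\mathrm{Poisson}(p\rho\,\ell(A))$; hence $\hat\xi_p\sim{\rm PPP}[p\rho]$. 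For (ii), set $\hat\xi_\ell:=\{x/\ell:x\in\hat\xi\}$ and write $\ell A:=\{\ell y:y\in A\}$, so that $\hat\xi_\ell(A)=\hat\xi(\ell A)$. Disjointness of $\{A_j\}_j$ passes to $\{\ell A_j\}_j$, whence (a) transfers from $\hat\xi$ to $\hat\xi_\ell$; and for bounded $A$ the variable $\hat\xi_\ell(A)=\hat\xi(\ell A)$ is Poisson with parameter $\rho\,\ell(\ell A)=\ell^d\rho\,\ell(A)$, since Lebesgue measure scales by $\ell^d$. Thus $\hat\xi_\ell\sim{\rm PPP}[\ell^d\rho]$.

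For (iii), I would realize $\o\sim{\rm PPP}[\rho,\nu]$ as the $\nu$--randomization of some $\hat\o\sim{\rm PPP}[\rho]$ (Definition~\ref{def_randomization}), so that every $x\in\hat\o$ carries an independent mark $E_x\sim\nu$. Assuming first $p:=\nu([-\g,\g])>0$, put $B_x:=\mathds{1}(|E_x|\le\g)$ for each $x$. Then the $B_x$ are independent biased coins with head probability $p$, the spatial projection $\widehat{\o_\g}$ equals $\{x\in\hat\o:B_x=1\}$ and is therefore exactly the $p$--thinning of $\hat\o$, and — by the elementary fact that, conditionally on $|E|\le\g$, a $\nu$--distributed mark has law $\nu_\g$ — conditionally on $\widehat{\o_\g}$ the surviving marks are i.i.d.\ with law $\nu_\g$ and independent of the rest. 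Hence $\o_\g$ is the $\nu_\g$--randomization of the $p$--thinning of $\hat\o$, and by (i) the latter has law ${\rm PPP}[p\rho]$; so $\o_\g\sim{\rm PPP}[p\rho,\nu_\g]$. The degenerate case $p=0$ gives $\o_\g=\emptyset$ a.s.\ and the statement holds trivially, the fact that $\nu_\g$ is then undefined being immaterial.

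The hardest part is really only bookkeeping: the measurability remarks of the first paragraph, the routine verification of axioms (a)--(b), and the small edge case $p=0$ in (iii); no substantive difficulty arises. As an alternative that bypasses the axiom checks, all three claims follow at once from the Laplace-functional characterization $\bbE[e^{-\hat\xi[f]}]=\exp\bigl(-\rho\int_{\bbR^d}(1-e^{-f(x)})\,dx\bigr)$ (for nonnegative continuous $f$ of bounded support): $p$--thinning replaces $1-e^{-f}$ by $p(1-e^{-f})$; dilation replaces $f$ by $f(\cdot/\ell)$, and a change of variables produces the factor $\ell^d$; and for (iii) one views ${\rm PPP}[\rho,\nu]$ as a Poisson process on $\bbR^d\times\bbR$ with intensity $\rho\,dx\otimes\nu(dE)$ and restricts it to the slab $\bbR^d\times[-\g,\g]$, whose trace intensity is $(p\rho)\,dx\otimes\nu_\g(dE)$. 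I would nonetheless present the elementary argument above as the main proof, since it does not require first establishing this equivalence.
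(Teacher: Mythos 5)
Your proposal is correct. For items (i) and (ii) your argument is the one the paper alludes to without spelling out: check the two axioms characterizing ${\rm PPP}[\rho]$ (independence of counts on disjoint Borel sets, Poisson law with the right parameter). Your generating-function computation for the thinned count and the observation that $\hat\xi_\ell(A)=\hat\xi(\ell A)$ with $\ell(\ell A)=\ell^d\ell(A)$ are exactly the standard arguments.

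For item (iii) your proof has the same structure as the paper's — first show that the energy-based truncation $\o\mapsto\o_\g$ is the composition (in law) of a $p$-thinning with $p=\nu([-\g,\g])$ and a $\nu_\g$-randomization, then invoke (i) — but you prove the intermediate fact differently. The paper isolates it as Lemma~\ref{lemma_2021}, stated for an \emph{arbitrary} SPP $\hat\bbP$, and proves it by a Laplace-transform identity, reducing the equality of laws to a criterion of Kallenberg (\cite[Lemma~12.1]{Kal}); you instead argue directly on the coin-mark construction, noting that $\{B_x=\mathds{1}(|E_x|\le\g)\}$ are i.i.d.\ $\mathrm{Bernoulli}(p)$ and that, conditionally on survival, each $E_x$ has law $\nu_\g$ independently. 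Your version is more transparent and just as general; its only informality is the final inference that this conditional structure forces the law $(\hat\bbP_p)_{\nu_\g}$, which strictly speaking needs the kind of uniqueness/determination argument that the Laplace-transform calculation supplies for free — this is precisely what the paper's appeal to Kallenberg buys. Your Laplace-functional alternative (viewing ${\rm PPP}[\rho,\nu]$ as a Poisson process on $\bbR^d\times\bbR$ with intensity $\rho\,dx\otimes\nu(dE)$ and restricting to the slab $\bbR^d\times[-\g,\g]$) is in fact the shortest fully rigorous route for the Poisson case, and the observation that the trace intensity is $(p\rho)\,dx\otimes\nu_\g(dE)$ is correct. Your handling of the degenerate case $p=0$ is a small bonus the paper does not mention.
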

\begin{proof} Properties (i) and (ii) are well known and follow easily from  the above characterization of ${\rm PPP}[\rho]$.
 Property (iii) follows from Lemma \ref{lemma_2021} in Section \ref{Tarvisio} and from Item (i).
\end{proof}
%
%
%
%
\section{Palm distribution and effective homogenized matrix $D(\b)$}\label{palmato}
In this section we introduce the effective homogenized matrix $D(\b)$. As discussed later,  it is  intimately  related to the conductivity properties of the \magenta{v.r.h.} random resistor network and to the diffusion properties of Mott's random walk. To define $D(\b)$ we  will  first need to introduce the Palm distribution.

In what follows, $\b$ is a positive number which, in the applications,  will correspond to the inverse temperature.
\begin{Definition}[Conductance field]
Given $\o\in \O$ we associate to each  \rosso{unordered} pair $\{x,y\}$ with $x,y\in \widehat{\o}$ and $x\not=y$ the \magenta{conductance}
\begin{equation}\label{conda}
c_{x,y}(\o,\b):=\exp\bigl\{ -  |x-y| -\b  ( |E_x|+ |E_y|+ |E_x-E_y|) \}\,.
\end{equation}
\end{Definition}
\rosso{Note that \eqref{conda} corresponds to \eqref{condu} by taking $\g:=2$ and by replacing $\b/2$ in \eqref{condu}  with $\b$. Trivially, by suitably rescaling distance and energy, the results obtained for \eqref{conda} can be extended to \eqref{condu}.}

Trivially, we have 
\be\label{balzetto}
|E_x|+ |E_y|+ |E_x-E_y| = \begin{cases} 
2 \max \big\{ |E_x|,|E_y|\big\} & \text{ if } E_x \cdot E_y \geq 0 \,,\\
2 |E_x-E_y| & \text{ if } E_x \cdot E_y \leq 0\,.
\end{cases}
\en

To simplify the notation below, it is convenient to set 
\[
c_{x,x}(\o,\b):=0 \qquad \forall \o \in \O\,,\; x\in \widehat{\o}\,.
\]

\begin{Definition}[Palm distribution]  \label{def_palm} Let $\bbP$ be the law of a stationary  marked SPP on $\bbR^d$ with finite and positive  intensity $\rho$. The Palm distribution $\vpz$ associated to $\bbP$  is defined  as  the unique probability measure on  $(\O,\cB(\O))$  concentrated on the Borel set $\O_0:=\{\o\in \O\,:\, 0 \in \widehat \o\}$ and given by 
\begin{equation}\label{marlena}
\vpz(A)= \frac{1}{\rot{\rho}} \int _{\O}d \bbP(\o)\int_{[0,1]^d}  d\widehat{\o} (x) \mathds{1}_A(\t_x \o)\,, \qquad \forall A \in \cB( \O) \,.
 \end{equation}
\end{Definition}
\rosso{Above, and in the rest, we denote by $\mathds{1}_A$ or $\mathds{1}(A)$ (according to better readability) the characteristic function of the event $A$.}

Equation \eqref{marlena} is a special case of Campbell's formula. A more general  Campbell's formula is the following (cf.~\cite[Eq.~(12.2.4)]{DV}, \cite[Thm.~1.2.8]{FKAS}): for any  nonnegative Borel function $f: \bbR^d\times \O \to[0,\infty) $   and for any function $f\in L^1( \bbR^d\times \O, dx \times \vpz)$ it holds 
 \begin{equation}\label{campanello}
 \int_{\bbR^d}dx  \int _{\O_0} d\vpz ( \o) f(x, \o) =\frac{1}{\rot{\rho}} \int _{\O}d \bbP(\o)\sum_{x  \in \widehat{\o}} f(x, \t_x \o) \,.
 \end{equation}
 
As discussed  in \cite[Chapter~12]{DV}, roughly $\vpz=\bbP(\cdot\,|\, 0\in \widehat \o)$.

When $\bbP=${\rm PPP}$[\rho,\nu]$, $\vpz$ can be realized as follows. Sample $\o\in \O$ with  law $\bbP$ and sample, independently from $\o$, the number    $E\in \bbR $ with law $\nu$. Then 
the configuration $\o \cup \{ (0,E)\}$ has law $\vpz$  (see \cite[Chapter~12]{DV}. \magenta{In} addition note that $\bbP( 0\in \widehat{\o})=0$, hence the probability that  in this construction  the origin appears with two different marks is zero). 

If $\bbP$ is the $\nu$--randomization of $\hat \bbP$ (i.e.~$\bbP=\hat\bbP_\nu$) and $\hat \bbP$ is the law of a stationary SPP with finite and positive  intensity $\rho$, then it is simple to check 
 that the Palm distribution $\vpz$ is the $\nu$--randomization of the Palm distribution $\hvpz$ associated to $\hat\bbP$ (i.e.~$\vpz=(\hvpz)_\nu$). \rosso{Indeed we recall that,}  similarly to \eqref{marlena}, $\hvpz$ is defined as the 
unique probability measure on  $(\hat\O,\cB(\hat \O))$  concentrated on the Borel set $\hat \O_0:=\{\hat \o\in \hat \O\,:\, 0 \in \hat \o\}$ and given by 
\begin{equation}\label{marlenabis}
\hvpz(A)= \frac{1}{\rot{\rho}} \int _{\hat\O}d \hat \bbP(\hat \o)\int_{[0,1]^d}  d\hat{\o} (x) \mathds{1}_A(\t_x \hat \o)\,, \qquad \forall A \in \cB( \hat \O) \,.
 \end{equation}

\begin{Definition}[Effective homogenized  matrix] \label{cuore} Let $\bbP$ be the law of a stationary  marked SPP on $\bbR^d$ with finite and positive  intensity $\rho$ and let $\vpz$ be the associated Palm distribution. Suppose that 
\be\label{2nd_moment}
\int_{\O_0} d\vpz(\o)\sum_{x\in \widehat \o}  c_{0,x}(\o,\b)|x|^2 <+\infty\,.
\en
Then
the \emph{effective homogenized  matrix}  $D(\b)$ is  
the unique $d\times d$ nonnegative symmetric matrix such that \rot{for all $a\in \bbR^d$ it holds}
 \begin{equation}\label{def_D}
 a \cdot D(\b) a =\inf _{ f\in L^\infty(\bbP_0) } \frac{1}{2}\int_{\O_0} d\vpz(\o)\sum_{x\in \widehat \o}  c_{0,x}(\o,\b) \left
 (a\cdot x - \nabla_x f (\o) 
\right)^2\,,
 \end{equation}
 where $\nabla_x f (\o) := f(\t_x \o) - f(\o)$.
\end{Definition}

Note that condition \eqref{2nd_moment} guarantees that the r.h.s. of \eqref{def_D} is finite (take $f\equiv 0$ there).

%
%
%
%
%
%
%
%
%
%
%

%
%
%
%
%
\section{\magenta{V.r.h.} random  resistor network}\label{ssec_MA}
In this section we recall some concepts and results from \cite{Fhom2} for what concerns the \magenta{v.r.h.} resistor network (we point out that  \cite{Fhom2} covers also many other resistor networks and uses \rot{ideas} of homogenization theory).

\smallskip

 In what follows $\b$ will denote the inverse temperature. 
\begin{Definition}[Resistor network \mamma$(\o)$] \label{def_MA}
Given  $\b>0$ and   $\o \in \O$, the  \magenta{v.r.h.}   resistor network \magenta{\rm RN}$[\b](\o)$  has node set $\widehat \o$ and has an electric  filament between any distinct nodes $x\not =y $ in $\widehat \o$ with \magenta{conductance} $c_{x,y}(\o,\b)$  \rot{defined in \eqref{conda}}.
\end{Definition}


%
%
%
%

\subsection{Finite volume  $\ell$--conductivity} \label{fumata}
Given $\ell>0$, we consider the stripe\footnote{We keep a 2-dimensional terminology, although $S_\ell$ lives in $\bbR^d$.}   $S_\ell:=\bbR\times (-\ell/2,\ell /2)^{d-1}$ and we decompose it as 
\be\label{colle1}
S_\ell=S_\ell^-\cup \L_\ell \cup S_\ell^+\,,\en
where 
\[S_\ell^-:=  \{x\in S_\ell\,:\, x_1\leq -\ell/2\}\,,\;  S_\ell^+:=  \{x\in S_\ell\,:\, x_1\geq \ell/2\}\,, \;
 \L_\ell:=(-\ell/2,\ell/2)^d\,.\]

\begin{figure}
\includegraphics[scale=0.20]{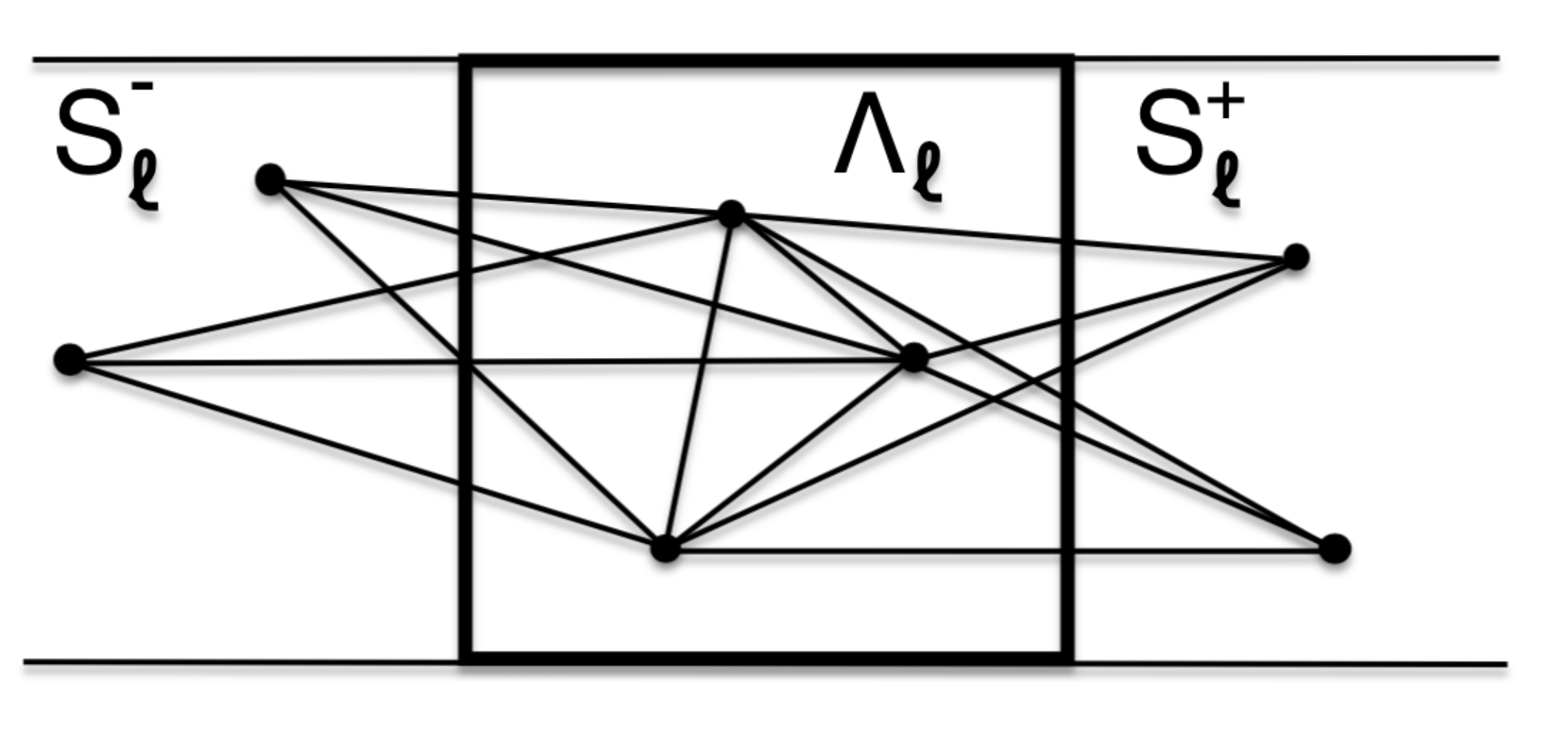}
\caption{A portion of the resistor network \mammaL. The box and the stripe correspond to  $\L_\ell$ and   $S_\ell$, respectively. }\label{messicano1}
\end{figure}

\rot{For the rest, 
we assume that 
\be\label{finito_totale}
\sum_{y\in \widehat\o }c_{x,y}(\o)<+\infty \qquad \forall x\in \widehat \o\,.
\en
Due to \cite[Lemma~4.1]{Fhom2} this holds $\bbP$--a.s. when 
 $\o$ is sampled with law $\bbP$ which is an ergodic stationary marked SPP with finite  and positive intensity satisfying  
\be\label{0_moment}
\int d\bbP^0(\o)\sum_{x\in \widehat \o}  c_{0,x}(\o,\b) <+\infty
\en
  (as in our applications).}


 \begin{Remark}\label{chitarrina} \rosso{As proved in \cite[Section~5.4]{Fhom1} conditions \eqref{2nd_moment} and \eqref{0_moment} are satisfied whenever $ \bbE[ \widehat{\o} ([0,1]^d)^2 ]  <+\infty$.} 
\end{Remark}
\begin{Definition}[Resistor network \mammaL$(\o)$] \label{def_MAelle}
Given  $\b,\ell>0$ and  $\o \in \O$,
 the v.r.h.  resistor network 
 \magenta{\rm RN}$[\b,\ell](\o)$ on the box $\L_\ell$ has node set $ \widehat \o \cap S_\ell$  and   is obtained by assigning to    each  pair  $\{x,y\}$ with   $x \in  \widehat \o \cap \L_\ell$ and $y \in \widehat \o \cap S_\ell$ 
 an electrical filament of \magenta{conductance} $c_{x,y}(\o,\b)$. 
\end{Definition}

We refer to  Figure \ref{messicano1} for a portion of \mammaL. 

\smallskip

\begin{Definition}[$\ell$--conductivity]  \label{def_ell_cond} Given  $\b,\ell>0$ and  $\o \in \O$,
we call $\s_\ell(\o,\b)$ the \emph{effective conductivity} of the resistor network \magenta{\rm RN}$[\b,\ell]$  along the first direction under a unit  electrical potential difference.  \rot{Namely},  $\s_\ell (\o,\b)$ is given by 
 \be\label{ide1}
\begin{split}
 \s_\ell(\o,\b)& :=  \sum _{x\in \widehat \o \cap S_\ell^-} \; \sum_{y \in  \widehat \o \cap \L_\ell}i^\o _{\b, \ell}(x,y) 
\end{split}
 \en
 \rot{where $ i^\o _{\b, \ell}(x,y) $   is the electric current flowing from $x$ to $y$.}
\end{Definition} 
\rot{We point out that condition \eqref{finito_totale} allows to  extend to  \mammaL$(\o)$ the standard Kirchoff's laws for electric circuits. Equivalently,  one can reduce the resistor network to a finite one as follows. Collapse all nodes in $\widehat \o \cap S_\ell^-$ (respectively, $\widehat\o\cap S_\ell^+$) into a single node with electric potential $1$ (respectively, $0$) and replace multiple filaments in parallel between any two nodes with a single filament whose conductance is the sum of the individual conductances, which  is finite due to   \eqref{finito_totale}). 
For later use
we  recall} 
a useful variational  characterization of $\s_\ell(\o, \b) $, usually called \emph{Dirichlet principle}  (cf.~\cite[Eq.~\rosso{(24)} and \rosso{Lemma~5.2}]{Fhom2} and \rosso{\cite[Exercise~1.3.11]{DS}}):
\be\label{papero}
\s_\ell (\o,\b)=\inf \bigl\{ \cD(u) \,\big{|} \, u: \widehat \o \cap S_\ell \to [0,1]\,, u_{|\widehat \o \cap S_\ell^-} \equiv 1
\,, \; u_{|\widehat \o \cap S_\ell^+}\equiv 0 \bigr\}\,,
\en
where $\cD(u)$ is the Dirichlet form
$ \cD(u) := \sum _{\{x,y\} \in \bbB^\o_\ell }c_{x,y}(\o,\b) \left( u(y)- u(x) \right)^2$.

\subsection{Infinite volume effective conductivity}
The following result is a special application of  \cite{Fhom2}[\rosso{Corollary~2.7}]\footnote{The hypotheses of Fact \ref{teo_eff_cond} assure the validity of Assumptions (A1),...,(A8) in \cite[Section~2]{Fhom2} (for (A7) and (A8) see \cite[Section~5.4]{Fhom1}), \rot{which  are at the basis of \cite{Fhom2}[Corollary~2.7]}.}.

\begin{Fact}[A. Faggionato  \cite{Fhom2}] \label{teo_eff_cond}
Let $\bbP$ be a stationary ergodic marked SPP  with finite and positive intensity $\rho$. Suppose in addition that 
\be\label{non_periodico}\bbP ( \o \in \O:  \t_x\o\not = \t_{x'} \o  \; \; \forall x\not =x' \text{ in }\rosso{\bbR^d})=1
\en
and that 
 $ \bbE[ \widehat{\o} ([0,1]^d)^2 ]  <+\infty$. 
If $e_1$ is an eigenvector of $D(\b)$, then 
\be\label{nanna}
\s(\b):=\lim _{\ell \to +\infty} \ell^{2-d} \s_\ell(\o,\b)=\rho  \rosso{D(\b)_{1,1}}\,.
\en
\end{Fact}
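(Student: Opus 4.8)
Fact \ref{teo_eff_cond} is stated as a special application of a homogenization result in \cite{Fhom2}, so the plan is essentially to explain why the hypotheses here trigger the machinery there and how the normalization matches. First I would check the integrability conditions: by Remark \ref{chitarrina}, the assumption $\bbE[\widehat\o([0,1]^d)^2]<+\infty$ implies both \eqref{2nd_moment} and \eqref{0_moment}, so $D(\b)$ is well defined via \eqref{def_D} and the finite-volume networks $\mammaL(\o)$ have a.s.\ well-posed electrical potentials $\V$ (the a.s.\ nonempty-intersection property of $\widehat\o$ with $S_\ell^\pm$ and $\L_\ell$, and the a.s.\ summability of $\sum_y c_{x,y}$, follow from ergodicity and \cite[Prop.~4.2, Lemma~4.1]{Fhom2} as recalled in Section \ref{fumata}). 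Condition \eqref{non_periodico} is the standard non-degeneracy (aperiodicity) assumption needed so that the corrector problem associated with \eqref{def_D} behaves well; it is exactly the hypothesis under which \cite[Corollary~2.7]{Fhom2} is proved.

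Granting these, the core input is the stochastic homogenization statement of \cite{Fhom2}: for a stationary ergodic marked SPP satisfying the above, the rescaled finite-volume conductivities $\ell^{2-d}\s_\ell(\o,\b)$ converge $\bbP$--a.s.\ to a deterministic limit expressed through the homogenized matrix. Concretely, the Dirichlet-principle characterization \eqref{papero} of $\s_\ell(\o,\b)$ is the finite-volume analogue of the variational formula \eqref{def_D} for $a\cdot D(\b)a$; the homogenization theorem identifies the $\Gamma$-limit (equivalently, the limit of minimal Dirichlet energies under the boundary conditions imposed on $S_\ell^\pm$, which force a macroscopic gradient in the $e_1$ direction) and yields that $\ell^{2-d}\s_\ell(\o,\b)$ converges to $\rho\, e_1\cdot D(\b)e_1$ — the factor $\rho$ entering because the Palm measure $\vpz$ in \eqref{def_D} and the spatial averaging in \eqref{campanello} differ by the intensity, and the factor $\ell^{2-d}$ being the correct scaling so that a unit potential drop across a box of side $\ell$ produces an $O(\ell^{d-2})$ effective conductance in $d$ dimensions. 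When $e_1$ is an eigenvector of the symmetric matrix $D(\b)$, one has $e_1\cdot D(\b)e_1 = D(\b)_{1,1}$ and there is no contribution from off-diagonal entries to the directional conductivity, which is exactly why the eigenvector hypothesis appears; this gives \eqref{nanna}.

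The steps, in order, would be: (1) invoke Remark \ref{chitarrina} to get \eqref{2nd_moment}–\eqref{0_moment} and hence well-posedness of $D(\b)$ and of $\V$; (2) record that ergodicity plus \eqref{non_periodico} are precisely the standing hypotheses of \cite[Corollary~2.7]{Fhom2}; (3) quote that corollary to obtain a.s.\ convergence of $\ell^{2-d}\s_\ell(\o,\b)$ to the homogenized directional conductivity along $e_1$; (4) use the identity $\s_\ell(\o,\b)=\cD(V^\o_{\b,\ell})$ from \eqref{eq_ailo} together with the Dirichlet principle \eqref{papero} to match the finite-volume variational problem with the cell problem \eqref{def_D}, tracking the intensity factor $\rho$; (5) conclude with the eigenvector reduction $e_1\cdot D(\b)e_1=D(\b)_{1,1}$.

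\textbf{Main obstacle.} Since \cite[Corollary~2.7]{Fhom2} is cited as doing the heavy lifting, the real work is bookkeeping rather than a new argument: the delicate point is verifying that the current excerpt's hypotheses (in particular the single moment condition via Remark \ref{chitarrina}, and the mild aperiodicity \eqref{non_periodico}) are literally the ones under which that corollary applies, and that the normalization $\ell^{2-d}$ together with the intensity prefactor $\rho$ comes out exactly as in \eqref{nanna} — i.e.\ correctly reconciling the two distinct but related variational formulas \eqref{papero} and \eqref{def_D}, one averaged over a box under Lebesgue measure and one over the Palm measure. If instead one had to reprove the homogenization from scratch, the hard part would be the two-scale / $\Gamma$-convergence argument showing the minimal Dirichlet energies converge, which requires a subadditive ergodic theorem and compactness for the correctors; but under the stated assumptions this is subsumed in the quoted result.
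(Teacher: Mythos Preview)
Your proposal is correct and matches the paper's own treatment: the paper does not prove this Fact but simply records that the stated hypotheses imply Assumptions (A1)--(A8) of \cite[Section~2]{Fhom2} (using Remark~\ref{chitarrina} and \cite[Section~5.4]{Fhom1} for (A7), (A8)) and then quotes \cite[Corollary~2.7]{Fhom2}. One small clarification: the identity $e_1\cdot D(\b)e_1=D(\b)_{1,1}$ holds for any symmetric matrix, so the eigenvector hypothesis is not needed for that reduction but rather is the condition under which the cited corollary applies (the paper notes that the more general \cite[Theorem~2.6]{Fhom2} covers $e_1\in\mathrm{Ker}(D(\b))$ or $e_1\in\mathrm{Ker}(D(\b))^\perp$).
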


\rot{Let us give some  comments about  the assumptions of Fact~\ref{teo_eff_cond}}:

\begin{itemize}
\item
We recall that the bound $ \bbE[ \widehat{\o} ([0,1]^d)^2 ]  <+\infty$ imply both \eqref{2nd_moment} and \eqref{0_moment} (see Remark~\ref{chitarrina}).

\item
 As discussed in \cite[Section~2]{FSS}, condition 
\eqref{non_periodico}
 is equivalent to 
\be\label{non_periodico_bis}
\vpz ( \o \in \O:  \t_x\o\not = \t_{x'} \o  \; \; \forall x\not =x' \text{ in } \widehat{\o})=1\,.
\en
If $\bbP=\hat \bbP_{\nu}$ for some non-degenerate probability $\nu$, then \eqref{non_periodico} is automatically satisfied (cf.~\cite[Section~2]{FSS}).

\item
If $\bbP$ is reflection invariant  \rot{(namely, for each  $k=1,2,\dots, d$, $\bbP$ is left invariant when flipping the sign of the $k$--coordinate of all $x\in \widehat{\o}$)}
then $D(\b)$ is diagonal, while if $\bbP$ is isotropic then $D(\b)$ is a multiple of the identity  matrix (cf.~\cite{demasi}[Thm.~4.6--(iii)]). In the above two cases, trivially $e_1$ is an eigenvector of $D(\b)$.

\item 
If $\bbP={\rm PPP}[\rho,\nu]$, then  all the  assumptions of Fact \ref{teo_eff_cond}  are satisfied  and $D(\b)$ is a multiple of the identity matrix by isotropy.
\end{itemize}

\medskip

Due to Fact \ref{teo_eff_cond}, when  $e_1$ is an eigenvector of $D(\b)$, $\s(\b)=\rho\rosso{D(\b)_{1,1}}$ can be thought of as the effective infinite volume  rescaled conductivity along the first direction of the \magenta{v.r.h.} resistor network.

\rosso{Finally, although not used below,  we mention that \cite[Theorem~2.6]{Fhom2} gives the scaling limit of $\s_\ell(\o,\b)$ assuming more generally that $e_1\in {\rm Ker} (D(\b))$ or $e_1\in {\rm Ker}(D(\b))^\perp$.}

\section{Mott's random walk}\label{sec_mott_rw}
Given $\o \in \O$, Mott's random walk (introduced in \cite{FSS}) is the continuous-time 
Markov chain with state space $\widehat\o$ and with jump rates $c_{x,y}(\o,\b)$.  As random walk in a random environment, Mott's random walk  belongs to the class of random conductance models \cite{Bis}.
 
 To have a well defined Markov chain we assume that $c_{x}(\o,\b)<+\infty$ for all $x\in \widehat\o$ and that no explosion occurs. Due to \cite[Lemma~3.5]{Fhom1} and the discussion 
in \cite[Section~5.4]{Fhom1} devoted to Mott's random walk, the above  conditions are satisfied for $\bbP$--a.a.~$\o$ if $\bbP$ is a stationary ergodic  marked SPP with finite and positive intensity $\rho$ satisfying \eqref{non_periodico} and $\bbE[ \widehat{\o} ([0,1]^d)^2 ]  <+\infty$.

The effective homogenized matrix $D(\b)$ corresponds to the asymptotic diffusion matrix of Mott's random walk as formalized \rot{in} Fact~\ref{tolmezzo} below. We first introduce some notation.
 Given $\e>0$ we write $(P^\e _{\o ,t} )_{t \geq 0}$ for the $L^2(\mu^\e _\o)$--Markov semigroup associated to the random walk $(\e X^\o _{ \e^{-2} t} )_{t\geq 0}$ on $\e \widehat \o$, where $\mu^\e_\o := \e^d \sum _{x\in \widehat \o} \d_{\e x}$. Simply $P^\e _{\o ,t}f(\rot{\e x})$ is the expectation of $f ( \e X^\o _{ \e^{-2} t})$ when the random walk starts at $\e x$.
  Similarly we write $( P_t  )_{t \geq 0} $ for the  Markov semigroup on $L^2( \rho dx)$ associated to the  (possibly degenerate)  Brownian motion on $\bbR^d$  with diffusion matrix $2 D(\b)$. 
 The following fact is   a  very special application of a much more general result given by   \magenta{\cite[Theorem~4.3]{F_x_Francis}  (which relies on \cite[Theorem~4.4]{Fhom1})}:
 \begin{Fact}[A.~Faggionato \magenta{\cite{Fhom1,F_x_Francis}}] \label{tolmezzo}
 Let $\bbP$ be a stationary ergodic marked SPP  with finite and positive intensity $\rho$. Suppose in addition that $\bbP$ satisfies \eqref{non_periodico}
and that 
 $ \bbE[ \widehat{\o} ([0,1]^d)^2 ]  <+\infty$. 
\magenta{Then for $\bbP$--a.a.~$\o$ and for any $f\in C(\bbR^d)$ decaying at infinity fast enough (namely, such that for any $a>0$ there is $C>0$ with $|f(x)|\leq C(1+|x|)^{-a}$) the following holds.
 Given   $t\geq 0$, as $\e\da 0$ we have (i)  $ L^2(\mu^\e _\o ) \ni P^\e_{\o,t} f \to P_t f \in L^2(\rho dx)$, (ii) $\int d\mu^\e_\o(z) |  
P^\e_{\o,t} f (z) -P_t f (z) |^p=0$ for $p=1,2$.}
\end{Fact}


 The precise  meaning of the convergence $ P^\e_{\o,t} f \to P_t f$  is recalled in \cite[Section~3.5]{Fhom1}, \magenta{\cite[Section~4.1]{F_x_Francis}}.
 The above assumptions on the law $\bbP$ are the same of the ones in   Fact \ref{teo_eff_cond} (with exception that here we do not require that $e_1$ is an eigenvector of $D(\b)$). Hence the comments following  Fact \ref{teo_eff_cond} can be applied to the present setting \rosso{as well}.
 
 Stronger form of CLTs or even  invariance principles (annealed or quenched) have been proved under stronger conditions (we stress that the basin of application of Fact \ref{tolmezzo} is \rosso{very large}). 
 We recall that the annealed invariance principle of Mott's random walk has been obtained in \cite{FSS}. The quenched invariance principle has been obtained in \cite{CF1} for $d=1$ and in \cite{CFP} for $d\geq 2$. \rot{In \cite[Theorem~1.2--(2)]{CF1}   we have provided a criterion equivalent to the non-degeneracy of $D(\b)$ (e.g.~for ${\rm PPP}(\rho,\nu)$ $D(\b)>0$ if and only if $\rho> 1$) while in   \cite{CFP} the hypotheses} also  imply the non-degeneracy of  \rot{$D(\b)$} \magenta{(cf.~\cite{FH} for a larger class of marked SPPs with non-degeneracy $D(\b)$)}.
 
  Bounds on the diffusion matrix  in agreement with Mott's law have been obtained  under suitable assumptions in \cite[Thm.~1]{FM} and \cite[Thm.~1]{FSS}  for $d\geq 2$, while bounds 
 in agreement with an Arrhenius-type behavior  have been obtained in 
  \cite[Thm.~1.2\rot{--(2)}]{CF1} for $d=1$ \rot{when $D(\b)$ is non-degenerate and $\nu$ gives no mass to $0$}. 
 In particular \rosso{these} bounds are of the form 
  \begin{align*}
  & c_1 \exp \bigl\{ -c_1' \, \beta ^\frac{\a+1}{\a+d +1}\bigr\}\mathbb{I} \leq D(\b)\leq 
c_2 \exp \bigl\{ -c_2' \, \beta ^\frac{\a+1}{\a+d +1}\bigr\}\mathbb{I} \qquad &(d\geq 2)\,,\\
& c_1 \exp \bigl\{ -c_1' \, \beta  \bigr\} \leq D(\b)\leq  c_2 \exp \bigl\{ -c_2' \, \beta \}\qquad & (d=1)\,,
\end{align*}
for suitable $\b$--independent positive  constants $c_1,c_1',c_2,c_2'$.
\rosso{Under Fact \ref{teo_eff_cond}}, as pointed out in \cite[Corollary~3.1]{Fhom2},  the above  bounds imply similar bounds for  $\s(\b)/\rho$ due to \eqref{nanna}.
%
%
%
%
%

\section{Graph $\cG[\z,\b](\o)$ and  critical conductance for $d\geq 2$}\label{grafene}
\magenta{In this section we} restrict to dimensions  $d\geq 2$ (this will be understood in what follows).

\medskip

The infinite volume \magenta{v.r.h.}~resistor network, i.e.~\mamma$(\o)$ introduced in Definition \ref{def_MA}, corresponds to a weighted complete graph with vertex set $\widehat{\o}$. On the other hand, at small temperature, one expects that the  electrical filaments with a conductivity lower bounded by  a suitable $\b$--dependent threshold  give the main contribution to the electron transport, apart from a negligible term. It is therefore not surprising that a crucial role in our analysis will be played by the following  graph $\cG[\z,\b](\o)$:

\begin{Definition}[Graph \giggioest] \label{giggio_bello} Given $\o\in \O$, $\b>0$, $\z>0$, the  graph \giggio$(\o)$ has vertex set $\widehat \o$ and has edges $\{x,y\}$ with $x\not =y$ in $\widehat \o$ such that 
$c_{x,y}(\o,\b) \geq e^{-\z}$, i.e. 
\be \label{connectcont}
|x-y|+\beta(|E_x|+|E_y|+|E_x-E_y)|)\leq \z\,.
\en
\end{Definition}
Note that $\cG[\z,\b](\o)$  is the graph in $\bbR^d$ with vertex set $\widehat \o$ and supporting the family of filaments in the  resistor network \magenta{\rm RN}$[\b](\o)$ with conductivity at least $e^{-\z}$. \rot{Moreover, the graph $\cG[\z,\b](\o)$ is finite range,  since all its edges have length at most $\z$.}

\medskip

We say that the graph $\cG[\z,\b](\o) $ percolates if it has  some unbounded connected component and we set 
\be
\theta (\z,\b, \rho, \nu):= {\rm PPP}[\rho, \nu]  \Big( \cG [\z,\b] \text{ percolates} \Big)\,.
\en
We recall that, according to our notation, the above r.h.s. is the probability for ${\rm PPP}[\rho, \nu]  $ of the event that $\cG [\z,\b]$ percolates (see Definition \ref{def_PPP}). 

By ergodicity w.r.t.~spatial translations and since the existence of an unbounded connected component is a translationally  invariant event,  we have  that $\theta(\z,\b, \rho, \nu) \in \{0,1\}$. 
\rosso{As derived in Appendix \ref{guizzo}}, since $d\geq 2$  there exists 
a critical threshold  $\z_c\in (0, +\infty) $  depending on $\b$, $\rho$ and  $\nu$ such that 
\be\label{critico}
\begin{cases}
\theta(\z,\b, \rho, \nu) =1 & \text{ if } \z > \z_c(\b,\rho,\nu)\,,\\
\theta(\z,\b, \rho, \nu)=0 & \text{ if } \z< \z_c(\b,\rho,\nu)\,.
\end{cases}
\en

Similarly to \cite[Section~IV.B]{AHL} for the critical conductance, we set the following definition:
\begin{Definition}[Critical \rosso{threshold} $\z$ and conductance] \label{criticone}    The value $\z_c(\b,\rho,\nu)$  in \eqref{critico}  and  the value 
\[c_c(\b,\rho,\nu):= \exp\{-  \z_c(\b,\rho,\nu) \}\]
  are  called, respectively,  \emph{critical \rosso{threshold} $\z$}   and  \emph{critical conductance} of the  \magenta{v.r.h.}~random resistor network when $\bbP={\rm PPP}[\rho,\nu]$.
\end{Definition}

\section{Main results for  ${\rm PPP}[\rho, \nu]$, $d\geq 2$}\label{MR_PPP}

\rosso{In this section we restrict to dimension  $d\geq 2$ without further mention}.

\begin{Definition}[\rosso{Probabilities $\nu^+ _{\rot{C},\a}$ and $\nu_{\rot{C},\a}$}]\label{almograve}
Given   $\rot{C}>0$ and $\a\geq 0$, we introduce the probability measures $\nu^+ _{\rot{C},\a}$ and $\nu_{\rot{C},\a}$ with support $[0,\rot{C}]$ and $[-\rot{C,C}]$ respectively as follows:
 \begin{align}
&\nu^+ _{\rot{C},\a}(dE)=\frac{(\a+1)}{ \rot{ C} ^{\a+1} }  E^\a \rosso{\mathds{1}_{[0,\rot{C}]}(E)}  dE\,,\label{speciale2}\\
&\nu_{\rot{C},\a} (dE)=\frac{(\a+1)}{ 2 \rot{C} ^{\a+1} }  |E|^\a  \rosso{\mathds{1}_{[-\rot{C},\rot{C}]}(E)}   dE\,.\label{speciale1}
\end{align}
\end{Definition}
Recall the graph $\cG[\z,\b]$ introduced in Definition \ref{giggio_bello}.  
\begin{Definition}[Critical intensities $\l^+_c(\a)$ and $\l_c(\a)$]
\label{fiordo} Given $\a\geq 0$ we  define $\l^+_c(\a) \in(0,+\infty)$  as the critical intensity such that   the graph $\cG[1, 1]$ under ${\rm PPP}[\l,\nu^+_{1,\a}] $ a.s. percolates  if $\l >\l^+_c(\a)$, while  it 
a.s.   does not percolate if $\l<\l^+_c(\a)$.
We define  similarly  $\l _c(\a)\in(0,+\infty)$  by replacing  $\nu^+_{1,\a}$ with $\nu_{1,\a}$.
  \end{Definition}
\rosso{The existence and non-triviality of the above critical intensities  $\l^+_c(\a)$ and  $\l_c(\a)$ can be easily proved by stochastic domination (in particular,  by comparing the random graph $\cG[1,1]$ with suitable  Poisson Boolean models as in  \cite{FagMim1,FagMim2}).  We sketch the derivation in   Appendix \ref{guizzo}, where we also derive another useful characterization of the critical intensities in terms of the Palm distribution and the connected component of the origin}.


\begin{Definition}[Classes $\cvp$ and $\cv$, critical density $\l_c^*(\a)$] \rosso{Let $\a\geq 0$.}
 \label{monte_pisciano} 
$\;\; $
\begin{itemize}
\item We define $\cvp$ as the family of probability measures $\nu$ such that $\nu$ restricted to a neighborhood of the origin is proportional to $E^\a \mathds{1}(E\geq 0) dE$, i.e.~for some \rot{$\e_\nu,C_\nu>0$}   we have   $\nu(B)=\nu^+_{\rot{C_\nu},\a}(B) $ for each Borel set $B\subset[\rot{-\e_\nu,\e_\nu}]$.  \rot{If $\nu=\nu^+ _{C,\a}$, then we take   $\e_\nu:=C$ and $C_\nu:=C$}.

\item  We define $\cv$ as the family of  probability measures $\nu$ such that $\nu$ restricted to a   neighborhood of the origin is proportional to $|E|^\a  dE$,  i.e.~for some \rot{ $\e_\nu,C_\nu>0$}  we have   $\nu(B)=\nu_{\rot{C_\nu},\a}(B) $ for each Borel set $B\subset[-\rot{\e_\nu,\e_\nu}]$. \rot{If $\nu=\nu _{C,\a}$, then we take   $\e_\nu:=C$ and $C_\nu:=C$}.

\item When dealing with $\nu \in \cvp$ or $\nu \in \cv$ we set
\begin{equation}\label{nuziale}
\l_c^*(\a):=
\begin{cases}
\l^+_c(\a) & \text{ if } \nu\in \cvp ,\\
\l_c(\a) & \text{ if } \nu \in \cv\,.\\
\end{cases}
\end{equation}
\end{itemize}
\end{Definition}
\rot{We point out that  $\nu\in \cV_\a^+$ iff $\nu$ around the origin  has density exactly  of the form $c_\nu E^\a \mathds{1}(E\geq 0)$. In this case $c_\nu=(\a+1)C_\nu^{-\a-1}$. Similarly, $\nu\in \cV_\a$ iff $\nu$ around the origin  has density exactly  of the form $c_\nu |E|^\a$.  In this case, $c_\nu=2(\a+1)C_\nu^{-\a-1}$.}


Recall Definition \ref{criticone} and in particular that  $c_c(\b,\rho,\nu):= \exp\{-  \z_c(\b,\rho,\nu) \}$.
\begin{Theorem}[Critical \rosso{threshold} $\z$ and conductance   of $\magenta{{\rm RN}}\text{$[\b]$}$] \label{teo1}  \rosso{Assume that $\nu \in \cvp\cup\cv$}. Then 
 the critical \rosso{threshold} $\z$ and the critical conductance  of the resistor network
$\magenta{{\rm RN}}[\b](\o)$  with $\o$ sampled according to ${\rm PPP}[\rho, \nu]$  are given by 
\begin{align}
\z_c(\b,\rho, \nu )& = \bigl( {\l^*_c(\a)/\r}\bigr)^{\frac{1}{\a+1+d}}
  (  \b \rot{C_\nu}) ^{\frac{\a+1}{\a+1+d}}\,, \label{zetacritico}\\
c_c(\b,\rho, \nu )& = \exp\big\{- \bigl( {\l^*_c(\a)/\r}\bigr)^{\frac{1}{\a+1+d}}
  (  \b \rot{C_\nu}) ^{\frac{\a+1}{\a+1+d}}\big\}\, ,\label{ccritico}
\end{align}
if  $\beta$ is large enough. More precisely, \eqref{zetacritico} and \eqref{ccritico} hold  whenever the  r.h.s.  of \eqref{zetacritico}  \orc{is smaller than} $\min\{\rot{C_\nu, \e_\nu}\}\b$. 
\end{Theorem}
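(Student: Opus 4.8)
The plan is to identify $\z_c(\b,\rho,\nu)$ by reducing the percolation question for $\cG[\z,\b]$ under ${\rm PPP}[\rho,\nu]$ to the \emph{canonical} percolation problem for $\cG[1,1]$ appearing in Definition~\ref{fiordo}, by means of a thinning on the energy coordinate followed by a joint rescaling of space and energy, and then solving the resulting equation for $\z$. First I would restrict to an energy window. By \eqref{balzetto} one has $|E_x|+|E_y|+|E_x-E_y|\geq 2\max\{|E_x|,|E_y|\}$, so every edge $\{x,y\}$ of $\cG[\z,\b](\o)$ forces $|E_x|,|E_y|\leq \z/(2\b)$. Setting $\g:=\z/\b$ and $\o_\g:=\{(x,E_x):|E_x|\leq \g\}$, the vertices of $\widehat\o$ with $|E_x|>\g$ are therefore isolated in $\cG[\z,\b](\o)$, and deleting them does not change whether there is an unbounded component; hence $\cG[\z,\b](\o)$ percolates iff $\cG[\z,\b](\o_\g)$ does. (The value $\g=\z/\b$, rather than the smaller $\z/(2\b)$ that would already suffice here, is chosen with the rescaling step in mind.)

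Next, assume $\g=\z/\b\leq \min\{C_0,\e\}$ --- this is exactly where the hypothesis ``$\b$ large enough'' enters. Then $\nu$ coincides on $[-\g,\g]$ with $\nu^+_{C_0,\a}$ if $\nu\in\cvp$ and with $\nu_{C_0,\a}$ if $\nu\in\cv$ (Definition~\ref{monte_pisciano}); in either case $\nu([-\g,\g])=(\g/C_0)^{\a+1}$, and the conditioned law $\nu(\cdot\mid[-\g,\g])$ equals $\nu^+_{\g,\a}$, resp.\ $\nu_{\g,\a}$. Proposition~\ref{cotechino}(iii) then gives $\o_\g\sim{\rm PPP}\big[\rho(\g/C_0)^{\a+1},\,\nu^*_{\g,\a}\big]$, writing $\nu^*$ for $\nu^+$ or $\nu$ according to the class. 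Now apply the bijection $\Phi(x,E):=(x/\z,\,E/\g)$. Using $\b\g=\z$, a one-line substitution shows that $\{x,y\}$ is an edge of $\cG[\z,\b](\o_\g)$ iff $\{\Phi(x),\Phi(y)\}$ is an edge of $\cG[1,1](\Phi(\o_\g))$, so $\Phi$ is a graph isomorphism and percolation is preserved. On the level of laws, the spatial rescaling $x\mapsto x/\z$ multiplies the intensity by $\z^d$ (Proposition~\ref{cotechino}(ii), in its obvious marked version), while the energy rescaling $E\mapsto E/\g$ just transports the i.i.d.\ marks by a bijection, sending $\nu^*_{\g,\a}$ to $\nu^*_{1,\a}$ and leaving the spatial intensity unchanged. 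Hence $\Phi(\o_\g)\sim{\rm PPP}[\lambda(\z),\nu^*_{1,\a}]$ with
\[
\lambda(\z):=\z^d\,\rho\,(\g/C_0)^{\a+1}=\frac{\rho\,\z^{\a+1+d}}{(\b C_0)^{\a+1}}\,.
\]

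Putting the three reductions together, for every $\z\leq \min\{C_0,\e\}\b$ one gets $\theta(\z,\b,\rho,\nu)={\rm PPP}[\lambda(\z),\nu^*_{1,\a}](\cG[1,1]\text{ percolates})$, which by Definition~\ref{fiordo} equals $1$ if $\lambda(\z)>\l^*_c(\a)$ and $0$ if $\lambda(\z)<\l^*_c(\a)$. Since $\z\mapsto\lambda(\z)$ is continuous and strictly increasing, these inequalities are equivalent to $\z$ being larger, resp.\ smaller, than $\z^\dagger:=(\l^*_c(\a)/\rho)^{1/(\a+1+d)}(\b C_0)^{(\a+1)/(\a+1+d)}$, i.e.\ the right-hand side of \eqref{zetacritico}. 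Under the standing hypothesis $\z^\dagger<\min\{C_0,\e\}\b$, this shows $\theta=0$ on $(0,\z^\dagger)$ and $\theta=1$ on the nonempty interval $(\z^\dagger,\min\{C_0,\e\}\b)$; comparing with the a priori dichotomy \eqref{critico} forces $\z_c(\b,\rho,\nu)=\z^\dagger$, which is \eqref{zetacritico}, and \eqref{ccritico} follows immediately from $c_c=e^{-\z_c}$.

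The computation is essentially bookkeeping; I expect the only points requiring genuine care to be the calibrated choice $\g=\z/\b$ --- forced by the requirement that, after rescaling, both coefficients in the edge inequality become $1$, so that one lands precisely on $\cG[1,1]$ and not on a rescaled variant --- and the careful tracking of the range $\z\leq\min\{C_0,\e\}\b$ in which the reduction is licit, so that \eqref{critico} can then be invoked to pin down $\z_c$. I do not anticipate an obstacle beyond these.
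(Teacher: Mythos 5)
Your proposal is correct and follows essentially the same route as the paper: thin to the energy window $|E_x|\le\z/\b$ (isolated vertices do not affect percolation), apply the homothety in space and energy to reduce to $\cG[1,1]$ under ${\rm PPP}[\lambda(\z),\nu^*_{1,\a}]$, and then solve $\lambda(\z)=\l^*_c(\a)$ for $\z$ on the admissible range $\z\le\min\{C_0,\e\}\b$. You re-derive in-line the rescaling machinery that the paper packages as Lemma~\ref{fadeev_new}, Corollary~\ref{funghetto} and Lemma~\ref{fissato}, and you pin down $\z_c$ from the dichotomy~\eqref{critico} in one stroke rather than checking the two sides separately; these are purely presentational differences, and the computations ($\b\g=\z$ normalizing the edge inequality, the intensity $\rho\z^d\nu(\g)=\rho\z^{\a+1+d}/(\b C_0)^{\a+1}$, and the pushforward $\nu^*_{\g,\a}\mapsto\nu^*_{1,\a}$) all check out.
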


The proof of Theorem \ref{teo1} is provided in Section \ref{sec_proof_teo1}. 
Theorem \ref{teo1} immediately implies the following:
\begin{Corollary}[Value $\chi(\rho,\nu)$]\label{luna}
Suppose that $\nu \in \cvp\cup \cv$ and consider the resistor network
$\magenta{{\rm RN}}[\b](\o)$  with $\o$ sampled according to ${\rm PPP}[\rho, \nu]$. 
Then 
 \be\label{premium}
\chi(\rho, \nu):=\lim _{\b\to +\infty} \b^{- \frac{\rosso{\a+1}}{\rosso{\a+1+d}} } \ln c_c(\b,\rho,\nu )=  - \bigl( {\l^*_c(\a)   C_{\rot{\nu}}^{\a+1}/\r}\bigr)^{\frac{1}{\a+1+d}}
     \,.
\en
\end{Corollary}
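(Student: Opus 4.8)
The proof of Corollary \ref{luna} is immediate from Theorem \ref{teo1}, so the only real content is to verify that the exponential decay rate one reads off from \eqref{ccritico} is exactly the claimed limit, and that the limit is well defined (i.e.\ the hypothesis ``$\b$ large enough'' is eventually satisfied). The plan is as follows.

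\smallskip
\noindent\textbf{Step 1: Reduce to an asymptotic statement.} Recall from Theorem \ref{teo1} that the identity
\[
c_c(\b,\rho,\nu)=\exp\Big\{-\bigl(\l^*_c(\a)/\r\bigr)^{\frac{1}{\a+1+d}}(\b C_0)^{\frac{\a+1}{\a+1+d}}\Big\}
\]
holds whenever the quantity $\bigl(\l^*_c(\a)/\r\bigr)^{\frac{1}{\a+1+d}}(\b C_0)^{\frac{\a+1}{\a+1+d}}$ is smaller than $\min\{C_0,\e\}\,\b$. Since $\frac{\a+1}{\a+1+d}<1$ (as $d\geq 2\geq 1$), the left-hand side grows like a power of $\b$ with exponent strictly less than $1$, while the right-hand side grows linearly in $\b$. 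Hence there is $\b_0=\b_0(\rho,\nu)$ such that the condition holds for all $\b\geq \b_0$, and for such $\b$ the displayed formula for $c_c(\b,\rho,\nu)$ is valid. (If $\nu=\nu^+_{C_0,\a}$ or $\nu_{C_0,\a}$ one takes $\e=C_0$, per Warning \ref{pericolo}; in all cases $\min\{C_0,\e\}>0$, which is all that is used.)

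\smallskip
\noindent\textbf{Step 2: Take the logarithm and the limit.} For $\b\geq\b_0$,
\[
\ln c_c(\b,\rho,\nu)=-\bigl(\l^*_c(\a)/\r\bigr)^{\frac{1}{\a+1+d}}(\b C_0)^{\frac{\a+1}{\a+1+d}}
=-\bigl(\l^*_c(\a)\,C_0^{\a+1}/\r\bigr)^{\frac{1}{\a+1+d}}\;\b^{\frac{\a+1}{\a+1+d}}\,,
\]
where in the last equality I have simply pulled the factor $C_0^{\frac{\a+1}{\a+1+d}}$ inside the $(\a+1+d)$-th root. Multiplying through by $\b^{-\frac{\a+1}{\a+1+d}}$ gives, for all $\b\geq\b_0$,
\[
\b^{-\frac{\a+1}{\a+1+d}}\ln c_c(\b,\rho,\nu)=-\bigl(\l^*_c(\a)\,C_0^{\a+1}/\r\bigr)^{\frac{1}{\a+1+d}}\,,
\]
a quantity independent of $\b$. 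Letting $\b\to+\infty$ the limit $\chi(\rho,\nu)$ exists and equals $-\bigl(\l^*_c(\a)\,C_0^{\a+1}/\r\bigr)^{\frac{1}{\a+1+d}}$, which is \eqref{premium}. By Definition \ref{fiordo}, $\l^*_c(\a)\in(0,+\infty)$, so $\chi(\rho,\nu)\in(-\infty,0)$, consistent with $c_c(\b,\rho,\nu)\to 0$.

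\smallskip
\noindent\textbf{Main obstacle.} There is essentially no obstacle here: all the difficulty is in Theorem \ref{teo1}, which supplies the exact closed form of $c_c(\b,\rho,\nu)$ for large $\b$. Given that, the corollary is a one-line algebraic manipulation plus the elementary observation in Step 1 that a sublinear power eventually falls below a linear function. The only point requiring a sentence of care is confirming that the ``$\b$ large enough'' proviso in Theorem \ref{teo1} is a genuine ``for all sufficiently large $\b$'' statement (so that the limit is meaningful), which follows from $\frac{\a+1}{\a+1+d}<1$.
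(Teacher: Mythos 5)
Your proof is correct and follows exactly the route the paper intends (the paper simply states that Corollary \ref{luna} is an "immediate" consequence of Theorem \ref{teo1}): you substitute the closed form \eqref{ccritico}, pull $C_0^{\frac{\a+1}{\a+1+d}}$ inside the $(\a+1+d)$-th root, and observe that the proviso of Theorem \ref{teo1} holds for all large $\b$ because $\frac{\a+1}{\a+1+d}<1$. Nothing is missing.
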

\smallskip

We now present our results for Mott's law.  We \magenta{recall} that, since the PPP is isotropic, $D(\b)$ is a multiple of the identity and $e_1$ is always an eigenvector of $D(\b)$. In particular \eqref{nanna} is satisfied.

  \begin{Theorem}[Upper bound in Mott's law] \label{parataUB} Suppose that  $\nu\in \cvp\cup \cv$ and take $\bbP={\rm PPP}[\rho,\nu]$.
Then we have 
 \begin{align}
 &  \limsup _{\b\to +\infty} \b^{- \rosso{\frac{\a+1}{\a+1+d}} } \ln  D(\b)_{1,1}  \leq
  \chi (\rho, \nu)\,, \label{preUB}\\
&  \limsup _{\b\to +\infty} \b^{- \rosso{\frac{\a+1}{\a+1+d}} } \ln \s(\b)\leq   
\chi (\rho, \nu)
\,.\label{UB}
  \end{align}
\end{Theorem}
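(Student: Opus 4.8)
The plan is to derive the upper bound by combining the variational (Dirichlet) characterization of $D(\b)$ with the critical threshold identified in Theorem \ref{teo1}. Since $\s(\b)=\rho\,D(\b)_{1,1}$ by Fact \ref{teo_eff_cond}, the two inequalities \eqref{preUB} and \eqref{UB} are equivalent, so it suffices to prove \eqref{UB} (or equivalently \eqref{preUB}). The core idea follows the Critical Path Analysis paradigm: at inverse temperature $\b$ one works with the graph $\cG[\z,\b]$ at a threshold $\z$ slightly above $\z_c(\b,\rho,\nu)$, so that this graph percolates, and one builds a test function (for the Dirichlet form defining $D(\b)$) that is essentially locally constant on the clusters of $\cG[\z,\b]$ and varies only slowly across space. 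Because every edge actually used in the Dirichlet form either lies in $\cG[\z,\b]$ (hence has conductance $\geq e^{-\z}$ but small slope) or lies outside it (hence has conductance $\leq e^{-\z}$, which is exponentially small), the total Dirichlet energy is bounded by roughly $e^{-\z}$ times a polynomially-growing combinatorial factor, plus a negligible contribution from the long/high-energy filaments controlled via the integrability assumption \eqref{2nd_moment}.

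Concretely, first I would fix $\e>0$ and set $\z=\z_\b:=(1+\e)\,\z_c(\b,\rho,\nu)$, noting that by \eqref{zetacritico} one has $\z_\b\sim(1+\e)(\l_c^*(\a)/\rho)^{1/(\a+1+d)}(\b C_0)^{(\a+1)/(\a+1+d)}$. Since $\z_\b>\z_c$, the graph $\cG[\z_\b,\b]$ percolates $\bbP$-a.s.; moreover, from the percolation results of \cite{FagMim1,FagMim2} (suitably rescaled, using the thinning/rescaling correspondence of Proposition \ref{cotechino} that reduces $\cG[\z_\b,\b]$ under ${\rm PPP}[\rho,\nu]$ to $\cG[1,1]$ under ${\rm PPP}[\l,\nu^*_{1,\a}]$ with $\l=\l^*_c(\a)(1+\e)^{d}$ up to lower-order corrections) the infinite cluster has good quantitative properties — in particular its density is positive and chemical distances are comparable to Euclidean ones. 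Next I would use these properties to construct, via a coarse-graining over boxes of a large fixed side $L$ (independent of $\b$), a potential $u_\b$ on $\widehat\o\cap S_\ell$ that equals the macroscopically linear profile $a\cdot x$ on the infinite cluster nodes and is interpolated appropriately off the cluster, with the defect gradients bounded. Plugging $u_\b$ into \eqref{def_D} (or into the finite-volume Dirichlet principle \eqref{papero} and then passing to the limit via \eqref{nanna}), the edges split into: (a) edges of $\cG[\z_\b,\b]$, contributing at most (number of such edges per unit volume) $\times\, e^{0}\times(\text{small slope})^2$, which after normalization is bounded by a constant independent of $\b$; (b) "bridging" edges not in the cluster but short, contributing at most $C e^{-\z_\b}$ times a polynomial in $\z_\b$; (c) all remaining edges, contributing a term that is $o(e^{-\z_\b})$ by \eqref{2nd_moment} and a large-deviation estimate on the number of long or high-energy filaments. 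Taking $\ell\to\infty$, then $\b\to\infty$, then $\e\downarrow0$ yields $\limsup_\b \b^{-(\a+1)/(\a+1+d)}\ln D(\b)_{1,1}\le -(1+\e)^{-\,?}\cdots$; a careful bookkeeping of the exponents (this is where the precise matching of the percolation constant $\l_c^*(\a)$ enters) gives exactly $\chi(\rho,\nu)=-(\l_c^*(\a)C_0^{\a+1}/\rho)^{1/(\a+1+d)}$ in the limit $\e\downarrow0$.

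The main obstacle I expect is the construction of the test function $u_\b$ with simultaneously (i) controlled gradients along the many short cluster edges and (ii) controlled gradients along the bridging edges connecting the infinite cluster to the stray points not on it, all while the mesh of the relevant percolation cluster is itself $\b$-dependent (it lives at scale $\b\,E(\b)$ after rescaling). This requires knowing that the infinite cluster of $\cG[\z_\b,\b]$ is, after rescaling, a "good" percolation cluster uniformly in $\b$ — which is precisely what the rescaling arguments built on \cite{FagMim1,FagMim2} provide, but transferring those statements to the exact threshold $\z_\b=(1+\e)\z_c$ and keeping the polynomial (not exponential) loss in the edge count is delicate. A secondary technical point is justifying the interchange of the $\ell\to\infty$ limit in \eqref{nanna} with the test-function bound; this is handled by noting that \eqref{papero} gives an upper bound on $\s_\ell$ for each $\ell$ from any admissible $u$, so one may first bound $\ell^{2-d}\s_\ell(\o,\b)$ uniformly and then pass to the limit, avoiding any delicate exchange. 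Finally, the negligibility in (c) should follow from Remark \ref{chitarrina} together with a standard Borel–Cantelli / ergodic-theorem argument bounding $\sum_{x\in\widehat\o\cap\L_\ell, x\notin\text{cluster}}\sum_{y}c_{x,y}(\o,\b)$ by $\ell^{d}$ times a quantity that is $o(e^{-\z_\b})$.
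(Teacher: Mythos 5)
Your proposal takes $\z_\b=(1+\e)\z_c(\b,\rho,\nu)>\z_c$, i.e.\ the supercritical phase, but for the upper bound this is the wrong side of the transition. In your own decomposition, part~(a) --- the contribution of edges of $\cG[\z_\b,\b]$ --- is ``bounded by a constant independent of $\b$''; since that constant does not vanish, the bound you obtain for $\ell^{2-d}\s_\ell$ (and hence for $D(\b)_{1,1}$) is $O(1)$, not $O(e^{-\z_\b})$. The exponentially small terms (b) and (c) are dominated by (a), so the argument never reaches \eqref{preUB}. More fundamentally, when $\cG[\z_\b,\b]$ percolates its infinite cluster spans $\L_\ell$ and contains filaments of conductance arbitrarily close to $1$, and any admissible potential $u$ satisfying the boundary conditions must drop from $1$ to $0$ \emph{across} that cluster, so its slope on the cluster edges is of order $1/\ell$ and cannot be made zero. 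The resulting Dirichlet energy is $\sim \ell^{d-2}$, giving only a constant-order upper bound on $\s(\b)$.

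The paper's proof goes in the opposite direction: it fixes $\l<\l_c^*(\a)$ and takes $\z=\ell(\b)$ as in \eqref{zoomo3}, which equals $\z_c$ with $\l_c^*(\a)$ replaced by $\l$, so $\z<\z_c$ and $\cG[\z,\b]$ is \emph{subcritical}; all its clusters are a.s.\ finite. It works with the corrector formulation \eqref{def_D} and the test function $f_n(\o)=-\min\{x_1:x\in C[\z,\b](\o)\}$ when ${\rm diam}\,C[\z,\b](\o)\leq n$ and $0$ otherwise. On an edge $\{0,x\}$ of $\cG[\z,\b]$ with small cluster this corrector satisfies $\nabla_x f_n(\o)=x_1$ exactly, so the increment $x_1-\nabla_x f_n(\o)$ \emph{vanishes}; what remains is either supported where $c_{0,x}<e^{-\z}$ or on the event of a large cluster, whose probability decays exponentially by the subcritical estimate of Fact \ref{patatine} from \cite{FagMim1}. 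This is the content of Lemma \ref{santo} combined with Lemma \ref{serafico} and the rescaling \eqref{zoomo4}. The missing idea in your proposal is that the upper bound must exhibit a separating set of slow filaments between finite clusters --- which exists only when $\z<\z_c$ --- and must use a corrector that cancels $x_1$ on the fast cluster edges \emph{exactly} rather than leaving a small but nonzero slope there.
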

\verde{The proof of Theorem \ref{parataUB} is given in Section~\ref{sec_proof_parataUB} and is  based on   a more general result (Theorem \ref{supersanto}), which is  not restricted to PPPs.}
We point out that \eqref{UB} is an immediate consequence of \eqref{preUB} since we have  \eqref{nanna}. The strategy to prove \eqref{preUB} will be to use the variational formula \eqref{def_D} for $D(\b)$ for clever choices of the test function $f$ (partially inspired by the proof of \cite[Theorem~3.12]{PR} and several scaling considerations discussed in the next sections).

\smallskip

We now move to the lower bound in Mott's law.
To this aim, first we need some definitions.
Recall first the definition of $S_\ell, S_\ell^-, S_\ell^+, \L_\ell$ given at the beginning of Section \ref{fumata}.
\begin{Definition}\label{def_LR} Let $\bbG=(\bbV,\bbE)$ be a graph with $\bbV\subset \bbR^d$.
A left-right (LR) crossing  of  the box $\L_\ell$ in the graph  $\bbG$    is any  sequence of distinct points $x_1$, $x_2$,$\dots$, $x_n \in \bbV$  with $n\geq 3$  such that 
$\{x_i ,x_{i+1}\} \in  \bbE$ for all $i=1,2,\dots, n-1$;
$x_1 \in S_\ell^{-} $ and   $x_n \in S_\ell^+ $;
$x_2, x_3,\dots, x_{n-1}\in \L_\ell$.
We also define $\cN_\ell(\bbG)$ as the  maximal number of  vertex-disjoint LR crossings of $\L_\ell$ in $\bbG$.
\end{Definition}
\rosso{By the above notation $\bbG=(\bbV,\bbE)$, we mean that $\bbV$ is the vertex set and $\bbE$ is the edge set, with $\bbE\subset \big\{ \,\{x,y\} \,:\, x,y\in \bbV,\;x\not=y \text{ in }\bbV\big\}$.}

\begin{Definition}\label{good_LR_distr}
Let $\bbG=(\bbV,\bbE)$ be a random  graph  with $\bbV\subset \bbR^d$.
We say that $\bbG$ has a   \emph{good LR crossing  distribution} if   a.s. there exists 
 $c>0$ such that  $\cN_\ell(\bbG)\geq c\, \ell^{d-1}$ for $\ell $ large enough.
 \end{Definition}
We point out that the constant $c$ in the above definition can be random.

The following fact \rosso{(valid for $d\geq 2$)} is obtained by applying   \cite[Thm.~1]{FagMim2} and by using arguments similar to the ones in  \cite[Section~8]{FagMim2} to check the hypotheses. We detail its derivation in Appendix \ref{pinco}. We mention that \cite{FagMim2} covers a large class of graphs built on \magenta{PPPs}, including the   graph $\cG[1,1]$. 
\begin{Fact}[A.~Faggionato \& A.H.~Mimun \cite{FagMim2} and Appendix \ref{pinco}] \label{cortocircuito}
\rot{For any  fixed}
 $\l > \l^+_c(\a)$ there exist  constants $c_1,c_2>0$ depending on $\l$ \rosso{and} $\a$ such that 
 \be
{\rm PPP}[\l , \nu^+_{1,\a}] \left( \cN_\ell (\cG[1,1]) \geq  c_1 \ell^{d-1}\right)\geq 1- \exp\left \{- c_2 \,\ell ^{d-1}\right\}\,,
\en
for $\ell$ large enough.
In particular, by \rot{the} Borel-Cantelli lemma, the random graph $\cG[1,1](\o)$ with $\o$ sampled according to 
${\rm PPP}[\l , \nu^+_{1,\a}] $ has a good LR crossing distribution.
\end{Fact}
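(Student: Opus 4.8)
The plan is to reduce the statement to the already-established percolation and crossing estimates of \cite{FagMim2}. First I would recall the general finite-volume result of \cite[Thm.~1]{FagMim2}: for a suitable class of stationary graphs built on a PPP that are in the supercritical percolation regime, the maximal number $\cN_\ell$ of vertex-disjoint LR crossings of the box $\L_\ell$ is, with probability exponentially close to $1$ in $\ell^{d-1}$, at least of order $\ell^{d-1}$. So the task is twofold: (a) verify that the random graph $\cG[1,1](\o)$ with $\o$ sampled from ${\rm PPP}[\l,\nu^+_{1,\a}]$ fits the hypotheses of that theorem, and (b) verify that $\l>\l^+_c(\a)$ indeed places this graph in the supercritical regime in the sense required there.

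For step (b), I would invoke Definition \ref{fiordo}: by construction $\l^+_c(\a)$ is exactly the percolation threshold for $\cG[1,1]$ under ${\rm PPP}[\l,\nu^+_{1,\a}]$, so $\l>\l^+_c(\a)$ gives a.s.\ an unbounded connected component, which is the starting point for the renormalization scheme in \cite{FagMim2}. For step (a) — which I expect to be the real content of the appendix derivation — I would follow the checklist of structural properties used in \cite[Section~8]{FagMim2}: finite range of dependence (two edges of $\cG[1,1]$ whose endpoint sets are far apart are independent, since the constraint \eqref{connectcont} forces edges to be short), insensitivity/monotonicity under adding points or shifting marks toward zero (a point with mark of small absolute value only increases the chance of being connected, matching the FKG-type monotonicity available for $\nu\in\cvp$ because all marks are nonnegative), invariance under lattice translations and the relevant reflections, and the a priori bounds needed to control the probability that a given coarse box is ``good''. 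The key point enabling the FKG inequality — and hence the whole scheme of \cite{FagMim2} — is that for $\nu^+_{1,\a}$ the energy marks have a fixed sign, as emphasized in the introduction and Appendix \ref{app_FKG}; this is why the statement is restricted to $\nu^+_{1,\a}$ (equivalently $\l^+_c(\a)$) and not to $\nu_{1,\a}$.

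Once the hypotheses are checked, \cite[Thm.~1]{FagMim2} yields directly the quantitative bound
\[
{\rm PPP}[\l,\nu^+_{1,\a}]\bigl(\cN_\ell(\cG[1,1])\geq c_1\ell^{d-1}\bigr)\geq 1-\exp\{-c_2\ell^{d-1}\}
\]
for $\ell$ large, with $c_1,c_2>0$ depending only on $\l$ and $\a$ (and on $d$). The final ``good LR crossing distribution'' claim then follows from a routine Borel--Cantelli argument: since $\sum_{\ell\in\bbN}\exp\{-c_2\ell^{d-1}\}<+\infty$ (using $d\geq 2$, so the exponent $\ell^{d-1}\to\infty$), a.s.\ only finitely many of the events $\{\cN_\ell(\cG[1,1])< c_1\ell^{d-1}\}$ with $\ell\in\bbN$ occur; interpolating to non-integer $\ell$ using monotonicity of $\ell\mapsto\cN_\ell$ (or simply enlarging the constant) gives $\cN_\ell(\cG[1,1])\geq c\,\ell^{d-1}$ for all $\ell$ large, in the sense of Definition \ref{good_LR_distr}.

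The main obstacle will be step (a): carefully matching the abstract assumptions of \cite[Thm.~1]{FagMim2} — which are stated for a general family of PPP-based graphs — to the concrete model $\cG[1,1]$, in particular checking the stochastic-domination/monotonicity and finite-dependence conditions and confirming that the FKG inequality applies in the fixed-sign case. This is precisely the verification deferred to Appendix \ref{pinco}; everything downstream (the exponential bound and Borel--Cantelli) is standard once those hypotheses are in place.
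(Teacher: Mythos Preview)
Your overall strategy is correct and matches the paper: apply \cite[Thm.~1]{FagMim2} to $\cG[1,1]$ under ${\rm PPP}[\l,\nu^+_{1,\a}]$, verify its hypotheses (A1)--(A5), and then Borel--Cantelli gives the good LR crossing distribution. However, you misidentify where the work lies. The paper checks (A1), (A3) trivially; (A5) is just the observation that on the support $[0,1]$ the structural function $h(a,b)=1-(|a|+|b|+|a-b|)=1-2\max\{a,b\}$ is decreasing in each variable; and (A4) is automatic from (A5), continuity of $h$, and boundedness of the support. The monotonicity/FKG/finite-range issues you flag as the ``main obstacle'' are not the bottleneck here.

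The actual core --- which your proposal does not address --- is assumption (A2): one must show that for some $\g\in(0,1)$ and some $\l_*<\l$ the \emph{weakened} graph $\cG[\g,1](\o)$ still percolates under ${\rm PPP}[\l_*,\nu^+_{1,\a}]$. This is a robustness statement (percolation survives a slight decrease of both the connection threshold and the intensity) and it does not follow from $\l>\l^+_c(\a)$ alone. The paper proves it via a scaling argument specific to $\nu^+_{1,\a}$: if $X\sim\nu^+_{1,\a}$ then $\g X\sim\nu_\g$, so the map $(x,E_x)\mapsto(\g x,\g E_x)$ sends $\cG[1,1]$ under ${\rm PPP}[m,\nu^+_{1,\a}]$ to $\cG[\g,1]$ under ${\rm PPP}[\g^{-d}m,\nu_\g]$; combined with the thinning $\o\mapsto\o_\g$ (which has law ${\rm PPP}[\nu(\g)\l_*,\nu_\g]$ with $\nu(\g)=\g^{\a+1}$) and monotonicity, one gets percolation of $\cG[\g,1]$ under ${\rm PPP}[\l_*,\nu^+_{1,\a}]$ provided $\g^{d}\g^{\a+1}\l_*>\l^+_c(\a)$, which holds for $\g$ close to $1$ once $\l_*\in(\l^+_c(\a),\l)$ is fixed. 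This scaling step, exploiting the homogeneity of $\nu^+_{1,\a}$, is the missing ingredient in your plan.
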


\begin{Remark} \magenta{The  results  in  \cite[Section~3]{FH} imply that Fact~\ref{cortocircuito} remains valid when replacing  $\l^+_c(\a), \nu^+_{1,\a}$ by $\l_{1,\a}, \nu_{1,\a}$, respectively. Since \cite{FH}  is not yet published at the moment, we mention this extension separately.} 
\end{Remark}
\rosso{The above fact on LR crossings is one of the main tools in the proof of the following result:}
  \begin{Theorem}[Lower bound in Mott's law] \label{parataLB} Suppose that  $\nu\in \cvp$ and take $\bbP={\rm PPP}[\rho,\nu]$.
Then we have 
 \begin{align}
 &  \liminf _{\b\to +\infty} \b^{- \rosso{\frac{\a+1}{\a+1+d}} } \ln D(\b)_{1,1}  \geq  \chi(\rho, \nu)\,, \label{preLB}\\
&  \liminf _{\b\to +\infty} \b^{- \rosso{\frac{\a+1}{\a+1+d}} } \ln \s(\b)\geq    \chi (\rho, \nu)
\,.\label{LB}
  \end{align}
  
  If in addition  the random graph $\cG[1,1](\o)$ with $\o$ sampled according to ${\rm PPP}[ \l, \nu_{1,\alpha}] $ has a good LR crossing distribution  for all $\l>\l_c(\a)$ \magenta{(as claimed in \cite{FH})}, then 
  \eqref{preLB} and \eqref{LB} hold also for  $\nu\in \cv$ and $\bbP={\rm PPP}[\rho,\nu]$.  \end{Theorem}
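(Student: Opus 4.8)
The plan is to prove the lower bound in Mott's law via a Critical Path Analysis strategy: identify a threshold $\z=\z(\b)$ slightly above the critical value $\z_c(\b,\rho,\nu)$ so that the graph $\cG[\z,\b](\o)$ is supercritical, and then use Rayleigh's monotonicity law to bound $\s_\ell(\o,\b)$ from below by the conductivity of a thinned network supported on vertex-disjoint LR crossings of $\cG[\z,\b](\o)$ in the box $\L_\ell$. Concretely, since every edge $\{x,y\}$ in $\cG[\z,\b](\o)$ has conductivity $c_{x,y}(\o,\b)\geq e^{-\z}$, a single LR crossing consisting of $\mathrm{O}(\ell/\z)$ edges carries, when viewed as resistors in series, an effective conductivity of order $e^{-\z}\z/\ell$ times a combinatorial factor; with $\cN_\ell\geq c\,\ell^{d-1}$ disjoint crossings in parallel one gets $\s_\ell(\o,\b)\gtrsim \ell^{d-2}\, e^{-\z}\,(\text{polynomial in }\ell,\z)$, and after dividing by $\ell^{d-2}$ and letting $\ell\to+\infty$ via Fact~\ref{teo_eff_cond} one obtains $\s(\b)\gtrsim e^{-\z(\b)}$ up to subexponential corrections. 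Choosing $\z(\b)=(1+\delta)\z_c(\b,\rho,\nu)$ and using the explicit formula \eqref{zetacritico} for $\z_c$, then sending $\delta\downarrow 0$, yields the desired $\liminf$ bound $\chi(\rho,\nu)$ for $\ln\s(\b)$; the statement for $D(\b)_{1,1}$ follows from \eqref{nanna}.

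The first step I would carry out carefully is the reduction to a clean percolation/geometry statement at inverse temperature $\b=1$ and a fixed density. By Proposition~\ref{cotechino}(iii) one restricts to marks in a small window $[-E(\b),E(\b)]$ (so that $\nu$ agrees with $\nu_{C_0,\a}$ or $\nu^+_{C_0,\a}$ there, using Warning~\ref{pericolo}), which produces a marked PPP with intensity $p\rho\approx \rho\, C_0^{-\a-1} E(\b)^{\a+1}$ (up to constants) and conditioned mark law $\nu_{E(\b),\a}$ (resp.\ $\nu^+_{E(\b),\a}$). Then I would apply the rescaling $x\mapsto x/(\b E(\b))$ together with the energy rescaling $E\mapsto E/E(\b)$: by Proposition~\ref{cotechino}(ii) the spatial PPP with intensity $p\rho$ becomes a PPP with intensity $(\b E(\b))^d p\rho$, the marks become distributed as $\nu_{1,\a}$ (resp.\ $\nu^+_{1,\a}$), and the connectivity condition \eqref{connectcont} at threshold $\z(\b)$ in the original scale transforms (after dividing through by $\z(\b)$) into the condition defining $\cG[1,1]$ in the new scale provided $\b E(\b)$ is comparable to $\z(\b)/E(\b)$ — this pins down $E(\b)\asymp \z(\b)/\b$ and then the resulting density is $\asymp (\b E(\b))^d p\rho \asymp \z(\b)^{d}\rho\,C_0^{-\a-1}(\z(\b)/\b)^{\a+1}\z(\b)^{-d}\cdot(\dots)$; balancing exponents recovers exactly that this density crosses $\l_c^*(\a)$ precisely when $\z(\b)$ crosses the r.h.s.\ of \eqref{zetacritico}, which is consistent with Theorem~\ref{teo1}. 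For $\z(\b)=(1+\delta)\z_c$ the rescaled density is $(1+\delta)^{\a+1+d}\l_c^*(\a)>\l_c^*(\a)$, hence strictly supercritical.

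Next I would invoke the good-LR-crossing input: for $\nu\in\cvp$ this is Fact~\ref{cortocircuito}, which gives not merely a.s.\ existence of $\geq c_1\ell^{d-1}$ vertex-disjoint LR crossings of $\cG[1,1]$ but an exponential tail, while for $\nu\in\cv$ this is exactly the stated Ansatz. Since the crossing count and the conductivity lower bound it produces are monotone and translation-covariant, I can transport the crossing statement back through the rescaling to $\cG[\z(\b),\b](\o)$ on $\L_\ell$ for each fixed $\b$, and then use the series/parallel estimate above. One subtlety to handle is that the crossings guaranteed by Fact~\ref{cortocircuito} have a length per crossing that is $\mathrm{O}(\ell)$ with high probability (again from \cite{FagMim2}-type estimates, as noted in Appendix~\ref{pinco}); this is what keeps the per-crossing conductivity from degenerating and must be stated as part of the transported statement. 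Finally I would assemble the chain of inequalities, take $\ell\to+\infty$ (using Fact~\ref{teo_eff_cond}, whose hypotheses hold for ${\rm PPP}[\rho,\nu]$ as already observed), then $\b\to+\infty$ taking logarithms and multiplying by $\b^{-\frac{\a+1}{\a+1+d}}$, and then $\delta\downarrow 0$.

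The main obstacle I expect is the bookkeeping of the thinning-and-rescaling in a way that is uniform enough: the window $E(\b)$, the density $p\rho$, and the spatial scale $\b E(\b)$ must be chosen so that after rescaling one lands on precisely the model $\cG[1,1]$ under ${\rm PPP}[\l,\nu^{(+)}_{1,\a}]$ with $\l$ strictly above $\l_c^*(\a)$, with all error terms (from the fact that $\nu$ only agrees with $\nu_{C_0,\a}$ near $0$, from the discreteness of point counts, from the subexponential prefactor $A(\b)$) being genuinely subexponential in $\b^{\frac{\a+1}{\a+1+d}}$ so that they vanish after taking the normalized logarithm. A secondary difficulty is making the Rayleigh/series-parallel argument rigorous on the random network: one must restrict to a sub-box, discard crossings that are too long or that bunch up, and control the boundary effects coming from the stripe decomposition $S_\ell=S_\ell^-\cup\L_\ell\cup S_\ell^+$ — this is where the detailed percolation estimates of \cite{FagMim1,FagMim2} (length of crossings, separation, exponential tails) are really needed, rather than just the bare existence of a good LR crossing distribution.
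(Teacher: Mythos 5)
Your overall route is the same as the paper's: fix $\z(\b)$ proportional to $\z_c$ but strictly supercritical, rescale by $x\mapsto x/\z(\b)$ and $E\mapsto\b E/\z(\b)$ to land on $\cG[1,1]$ under ${\rm PPP}[\l,\nu^{(+)}_{1,\a}]$ with $\l>\l^*_c(\a)$, use Rayleigh monotonicity to drop to the unit-weight graph $e^{-\z}\cG[\z,\b]$, invoke Fact~\ref{cortocircuito} (or the Ansatz for $\nu\in\cv$) to get $\cN_\ell\gtrsim\ell^{d-1}$ vertex-disjoint LR crossings, pass $\ell\to\infty$ via Fact~\ref{teo_eff_cond}, and then optimize in $\b$. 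The reduction to $\cG[1,1]$ that you sketch is exactly what the paper formalizes as Lemma~\ref{lasagne25} (via Lemma~\ref{fadeev_new} and Proposition~\ref{paradiso_PPP}), so that part is fine.

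Where you diverge is in how you turn the crossings into a conductivity lower bound, and here your version introduces an unnecessary dependency. You propose the naive series/parallel estimate: each LR crossing of length $|\p_i|$ contributes conductance $\geq e^{-\z}/|\p_i|$, so you need a per-crossing length bound $|\p_i|=\mathrm{O}(L)$ before summing over the $\cN_L$ parallel paths; you flag this as an extra input you would extract from \cite{FagMim2}-type tail estimates. But Fact~\ref{cortocircuito} as stated gives only a lower bound on the \emph{number} of vertex-disjoint crossings, not a per-crossing length control, and you would have to go back into \cite{FagMim2} to obtain and transport the latter. The paper's Lemma~\ref{lemma_LB_chiave} shows this is not needed: after the same Cauchy--Schwarz step giving $\s_\ell(\bbG,1)\geq\sum_i 1/|\p_i|$, it applies Jensen to the convex map $x\mapsto 1/x$, obtaining $\sum_i 1/|\p_i|\geq n^2/\sum_i|\p_i|$, and then uses vertex-disjointness to bound the \emph{total} length $\sum_i|\p_i|\leq 2n+|\bbV\cap\L_\ell|$. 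This replaces the per-crossing length control you wanted by a bound on the total vertex count in the box, which is supplied directly by the ergodic theorem (the paper obtains $|\widehat{\o_{\z,\b}}\cap\L_L|\sim\l(2L)^d$). This is both a genuine simplification and important for the statement for $\nu\in\cv$: there the only hypothesis taken as an Ansatz is the good LR crossing distribution, i.e.\ the lower bound on $\cN_L$, and no tail bound on crossing lengths. If you build your argument on a per-crossing length estimate, you have quietly strengthened the Ansatz beyond what the theorem assumes. Adopting the Jensen/harmonic-mean inequality closes that gap and makes your proof match the paper's.
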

The proof of \verde{Theorem \ref{parataLB}} is provided in \verde{Section~\ref{sec_proof_parataLB} as a consequence of a more general result given by Theorem~\ref{santantonio}}. Due to \eqref{nanna},  \eqref{LB} implies \eqref{preLB}. We will  prove \eqref{LB} by means of  Rayleigh monotonicity law for resistor networks.

\medskip

Theorems \ref{parataUB} and \ref{parataLB} have the following immediate consequence:
\begin{Corollary}[Mott's law] \label{immenso}
Suppose that  $\nu\in \cvp$ and take $\bbP={\rm PPP}[\rho,\nu]$.
Then we have 
 \[  \lim _{\b\to +\infty} \b^{- \rosso{\frac{\a+1}{\a+1+d}} } \ln D(\b)_{1,1}  =     \lim _{\b\to +\infty} \b^{- \rosso{\frac{\a+1}{\a+1+d}} } \ln \s(\b)=    \chi (\rho, \nu)\,.
\]
  The same results hold for $\nu\in \cv$ and $\bbP={\rm PPP}[\rho,\nu]$ if 
the random graph $\cG[1,1](\o)$ with $\o$ sampled according to ${\rm PPP}[ \l, \nu_{1,\alpha}] $ has a good LR crossing distribution  for all  $\l>\l_c(\a)$  \magenta{(as claimed in \cite{FH})}.
\end{Corollary}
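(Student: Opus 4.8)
The plan is to obtain Corollary~\ref{immenso} as a purely formal consequence of Theorems \ref{parataUB} and \ref{parataLB} together with the identity \eqref{nanna} from Fact~\ref{teo_eff_cond}. First I would check that Fact~\ref{teo_eff_cond} applies: since $\bbP={\rm PPP}[\rho,\nu]$ is stationary, ergodic, satisfies \eqref{non_periodico} (automatically, because $\nu$ is non-degenerate, as $\nu\in\cvp\cup\cv$), and has $\bbE[\widehat\o([0,1]^d)^2]<+\infty$ (the Poisson count has all moments finite), the hypotheses hold; moreover by isotropy of the PPP the matrix $D(\b)$ is a multiple of the identity, so $e_1$ is an eigenvector and \eqref{nanna} gives $\s(\b)=\rho\,D(\b)_{1,1}$. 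Consequently $\b^{-\frac{\a+1}{\a+1+d}}\ln\s(\b)=\b^{-\frac{\a+1}{\a+1+d}}\ln D(\b)_{1,1}+\b^{-\frac{\a+1}{\a+1+d}}\ln\rho$, and the last summand tends to $0$ as $\b\to+\infty$; hence the limiting behaviour of $\b^{-\frac{\a+1}{\a+1+d}}\ln\s(\b)$ and of $\b^{-\frac{\a+1}{\a+1+d}}\ln D(\b)_{1,1}$ coincide. So it suffices to treat $D(\b)_{1,1}$.

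Next, for $\nu\in\cvp$ and $\bbP={\rm PPP}[\rho,\nu]$, Theorem~\ref{parataUB} yields
\[
\limsup_{\b\to+\infty}\b^{-\frac{\a+1}{\a+1+d}}\ln D(\b)_{1,1}\leq\chi(\rho,\nu),
\]
while Theorem~\ref{parataLB} yields the matching
\[
\liminf_{\b\to+\infty}\b^{-\frac{\a+1}{\a+1+d}}\ln D(\b)_{1,1}\geq\chi(\rho,\nu).
\]
Combining the two inequalities shows that $\limsup$ and $\liminf$ agree and equal $\chi(\rho,\nu)$, i.e.\ the limit exists and equals $\chi(\rho,\nu)$. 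The corresponding statement for $\s(\b)$ then follows from the identity of limits established above, which proves the first assertion of the corollary for $\nu\in\cvp$.

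For the second assertion, I would observe that the upper bound \eqref{preUB}–\eqref{UB} in Theorem~\ref{parataUB} is already stated for $\nu\in\cvp\cup\cv$ unconditionally, so nothing extra is needed there. For the lower bound, Theorem~\ref{parataLB} supplies \eqref{preLB}–\eqref{LB} in the case $\nu\in\cv$ under precisely the hypothesis that the random graph $\cG[1,1](\o)$ with $\o\sim{\rm PPP}[\l,\nu_{1,\a}]$ has a good LR crossing distribution for all $\l>\l_c(\a)$. Under that Ansatz the same squeeze argument as above applies, giving the limit $\chi(\rho,\nu)$ for $D(\b)_{1,1}$ and hence, via \eqref{nanna}, for $\s(\b)$.

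There is essentially no obstacle here: the corollary is a bookkeeping consequence of the two theorems plus the homogenization identity, the only points requiring a word of justification being the verification of the hypotheses of Fact~\ref{teo_eff_cond} for the PPP and the harmless remark that $\b^{-\frac{\a+1}{\a+1+d}}\ln\rho\to0$. If anything deserves care, it is making explicit that in \eqref{nanna} the relevant eigenvector condition is automatic by isotropy, which I would note once at the start of the proof; the rest is the elementary fact that a sequence whose $\limsup$ and $\liminf$ coincide has a limit equal to their common value.
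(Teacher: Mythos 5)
Your proposal is correct and follows the same route as the paper, which presents the corollary as an immediate consequence of Theorems~\ref{parataUB} and \ref{parataLB}: the squeeze between \eqref{preUB}/\eqref{UB} and \eqref{preLB}/\eqref{LB} gives the claimed limits directly, with the same conditional hypothesis carried over in the $\nu\in\cV$ case. Your additional remarks about verifying Fact~\ref{teo_eff_cond} and the vanishing of $\b^{-\frac{\a+1}{\a+1+d}}\ln\rho$ are harmless but not strictly needed, since the two theorems already state the bounds for both $D(\b)_{1,1}$ and $\s(\b)$ separately.
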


\begin{Remark} The property that the constant $\chi(\rho,\nu)$ appears in all the above asymptotics confirms the validity of the paradigm of critical path analysis (CPA) in our context (see \cite{Ji} for other CPA  rigorous results in   a different model of transport in disordered media).
\end{Remark}

\section{Main results concerning universality}\label{MR_univ}

 The results presented in this section hold for all dimensions $d\geq 1$, \magenta{but they become effective only for $d\geq 2$}.
 
 \rot{The derivation of Theorem~\ref{parataUB} and \ref{parataLB}}
  holds, \rot{to a large extent,} for generic marked SPPs. \rot{In this derivation (cf.~Sections \ref{sec_UB} and \ref{sec_LB})},
   a central role is played by a  mechanism of thinning and rescaling which ultimately  captures the backbone of  filaments in the \magenta{v.r.h.} resistor network mainly contributing to the electron transport. This mechanism is described by the marked SPP \eqref{gandhi} in Theorem \ref{paradiso} below, which will frequently appear in the next sections. 
  
 \blu{We show here  that, also  when starting with a marked SPP not of the form ${\rm PPP}[\rho,\nu]$, this mechanism asymptotically leads to ${\rm PPP}[\magenta{\l},\nu^+_{1,\a}]$ or ${\rm PPP}[\magenta{\l},\nu_{1,\a}]$ \magenta{for some density $\l$}, thus \magenta{supporting} the universality behind Mott's law as Physics law and the central role played by the  PPP  (see Section~\ref{universo}).}

 \begin{Definition}[Probability measure $\nu_{\star, \g}$] \label{stella_stellina}
Given   a probability measure $\nu$ on $\bbR$ having $0$ in its support and given $\g>0$, we define  $\nu_{\star, \g}$  as the probability measure with support inside $[-1,1]$ \rosso{such that}
 \begin{equation}
 \nu_{\star, \g}(B):= \frac{\nu(\g B \cap [-\g, \g])}{\nu([-\g, \g])}\,, \qquad B \in \cB (\bbR)\,.
 \end{equation}
 \end{Definition}
 Note that $ \nu _{\star,\g} ( \cdot )= P\big(\,  X/\g \in \cdot \,\big{|}\, |X/\g |\leq 1\,\big) $, where $X$ is a random variable with distribution  $\nu$.
\begin{Theorem}\label{paradiso} Consider a stationary ergodic SPP  on $\bbR^d$ with law  $\hat \bbP$  and  intensity $\rho \in (0,+\infty) $ and consider  a probability measure $\nu$ on $\bbR$ with $0$ in its support such that 
\begin{itemize}
\item[(i)] $\nu_{\star, \g}$  converges \rosso{weakly}  to $\nu^+_{1,\a}$ or to  $\nu_{1,\a}$ as $\g \da 0$;
\item[(ii)]    the following limit exists, finite and  positive:
 \be\label{vichinghi100}
C_*:=\lim _{\g \da 0} \frac{\nu([-\g, \g] )}{ \g ^{\a+1}}\in (0,+\infty)\,.
\en
\end{itemize}
Fix $\l>0$ and set 
\be\label{canditi105} \ell(\b):= \bigl(\l /(C_* \rho)\bigr)^{\frac{1}{\a+1+d}} \b ^{\frac{\a+1}{\a+1+d}}\,.
\en
Then, when  $\o $ is  sampled according to   $\hat{\bbP}_\nu$, the \verde{law $\bbP^\b$ of the} rescaled marked SPP \be\label{gandhi}
\magenta{\o_{\ell(\b),\b}:=}\left\{ \Big ( \frac{x}{\ell(\b)}, \frac{\b}{\ell(\b)} E_x\Big) \,: \, x\in\widehat \o  \,, |E_x| \leq \frac{\ell(\b)}{\b} \right \}
\en 
 converges weakly \rot{as $\b\to+\infty$} to   
\be\label{ghibaudo}
\begin{cases}
{\rm PPP}[\l, \nu^+_{1, \a}] & \text{ if } \lim_{\g\da 0}\nu_{\star, \g}=\nu^+_{1,\a}\,,\\
{\rm PPP}[\l, \nu_{1, \a}] & \text{ if }\lim_{\g \da 0}\nu_{\star, \g}=\nu_{1,\a}\,.
\end{cases}
\en
\end{Theorem}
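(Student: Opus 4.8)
The plan is to verify weak convergence of the rescaled marked point process by checking convergence of void probabilities together with convergence of the factorial moment measures (equivalently, by the standard criterion that for a Poisson limit it suffices to control avoidance functions on relatively compact sets), exploiting that the target $\mathrm{PPP}[\l,\nu^{\pm}_{1,\a}]$ is characterized by its intensity and independence properties. First I would recall that the spatial marginal of the displayed process \eqref{gandhi} is obtained by keeping only those points $x\in\widehat\o$ with $|E_x|\leq \ell(\b)/\b$ and then rescaling by $1/\ell(\b)$. Since the marks are i.i.d.\ with law $\nu$ and independent of $\widehat\o$, the point process of retained sites is, conditionally on $\widehat\o$, a $p(\b)$-thinning of $\widehat\o$ with $p(\b)=\nu([-\ell(\b)/\b,\,\ell(\b)/\b])$; writing $\g(\b):=\ell(\b)/\b$ and using hypothesis (ii) together with \eqref{canditi105} one computes $p(\b)\sim C_*\,\g(\b)^{\a+1}$ and $\ell(\b)^d p(\b)\rho \to \l$. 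So the expected number of retained and rescaled points per unit volume converges to $\l$.

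\textbf{Key steps.} The main step is to show that thinning with a vanishing probability followed by a diverging spatial rescaling turns an arbitrary stationary ergodic SPP of intensity $\rho$ into a Poisson process of intensity $\l$. I would do this by a blocking / coupling argument: partition $\bbR^d$ into boxes of a fixed macroscopic size $L$ (in rescaled coordinates), which correspond to boxes of side $L\ell(\b)\to\infty$ in the original coordinates; by the ergodic theorem (Definition \ref{def_spp} and the stationarity/ergodicity of $\hat\bbP$), the number of original points in each such box is $\rho (L\ell(\b))^d(1+o(1))$, and the retained points inside each box form, conditionally, a sum of i.i.d.\ indicators with success probability $p(\b)\to 0$ whose total mean converges to $\l L^d$. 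A Poisson approximation (Le Cam / Stein–Chen bound, using that $p(\b)\to 0$ so the second-order term is $O(p(\b))\to 0$) then shows the count in each box converges to a Poisson$(\l L^d)$ random variable; independence across disjoint boxes in the limit follows because the thinning coins are independent and the spatial correlations of $\widehat\o$ over boxes separated by a fixed rescaled distance decay after rescaling (again by ergodicity/mixing-type reasoning, or more robustly by the fact that after thinning with probability $\to 0$ only the one-point intensity survives). Sending $L\to 0$ identifies the limit as $\mathrm{PPP}[\l]$. For the marks, conditionally on the retained sites, the rescaled mark of a retained point $x$ is $(\b/\ell(\b))E_x = E_x/\g(\b)$ conditioned on $|E_x|\leq \g(\b)$, which by definition is distributed as $\nu_{\star,\g(\b)}$; hypothesis (i) says $\nu_{\star,\g(\b)}\Rightarrow \nu^+_{1,\a}$ (resp.\ $\nu_{1,\a}$), and the marks remain i.i.d.\ and independent of the geometry, so the joint process converges to the $\nu^{\pm}_{1,\a}$-randomization of $\mathrm{PPP}[\l]$, i.e.\ to the right-hand side of \eqref{ghibaudo}, by Definition \ref{def_PPP}. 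To package this cleanly I would phrase the final convergence via test functions: for $f\in C_c(\bbR^d\times\bbR)$ show $\bbE[\exp(-\o_\b[f])]$ converges to the Laplace functional of the target PPP, splitting the integral over spatial boxes and over the mark variable and using the two convergences above.

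\textbf{Main obstacle.} The delicate point is the passage from ergodicity of $\hat\bbP$ to the \emph{independence} of the limiting box counts and, relatedly, to controlling the $o(1)$ errors uniformly enough as $L\to 0$ after the $\b\to\infty$ limit. Pure ergodicity gives law-of-large-numbers control of box counts but not directly a quantitative decoupling between boxes; the saving grace is that thinning with probability $p(\b)\to 0$ is extremely forgiving — after thinning, only the mean measure $\rho\,dx$ is felt in the limit and all higher factorial moment measures of the original process contribute at order $p(\b)^2$ times a second-order correlation, which vanishes provided $\widehat\o$ has a locally finite second factorial moment measure on the relevant scales; this is exactly the content of the Poisson limit theorem for thinnings of point processes (the analogue for i.i.d.\ marks of the classical result that rare thinnings converge to Poisson). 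I would therefore either invoke such a thinning-limit theorem directly, or, to keep the paper self-contained, carry out the Stein–Chen estimate on each box and then a union bound over the $O(L^{-d})$ boxes in a fixed compact set, which is harmless since $L$ is sent to zero only after $\b\to\infty$. The mark part is then routine given hypothesis (i).
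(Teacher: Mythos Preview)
Your outline is correct in its architecture --- separate the problem into (a) the thinned and rescaled spatial process converging to $\mathrm{PPP}[\l]$ and (b) the conditional mark law $\nu_{\star,\g(\b)}$ converging to $\nu^{\pm}_{1,\a}$, then combine --- and this matches the paper. But the paper's execution of step (a) is much shorter than the blocking/Stein--Chen route you sketch, and avoids precisely the obstacle you identify.

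The paper first records (Lemma~\ref{fadeev_250}) that the law of \eqref{gandhi} is exactly $(\hat\bbP_p\circ\Psi_{\ell(\b)}^{-1})_{\nu_{\star,\g}}$ with $p=\nu(\g)$, $\g=\ell(\b)/\b$. It then proves a general lemma (Lemma~\ref{la_bomba}): if $p\,z(p)^d\to c$ and $\mu^{(p)}\Rightarrow\mu$, then $(\hat\bbP_p\circ\Psi_{z(p)}^{-1})_{\mu^{(p)}}\Rightarrow\mathrm{PPP}[c\rho,\mu]$. The mark part is split off by a separate continuity lemma (Lemma~\ref{weak_conv}) showing that weak convergence of base point processes together with weak convergence of mark distributions implies weak convergence of the randomizations. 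For the spatial part, instead of Stein--Chen the paper invokes the Cox-process limit theorem for thinnings (\cite{Kal}, Thm.~16.19): one only has to show that the \emph{random measure} $p\,\hat\xi^{(p)}$ (the rescaled unthinned process, multiplied by $p$) converges weakly to the \emph{deterministic} measure $c\rho\,dx$; a Cox process with deterministic directing measure is Poisson. That convergence is nothing but $\e^d\sum_{x\in\hat\xi}f(\e x)\to\rho\int f\,dx$ a.s., the spatial ergodic theorem, which holds under the bare hypotheses of Theorem~\ref{paradiso}.

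This sidesteps your obstacle entirely: there is no box decomposition, no decoupling argument, no second factorial moment, no uniformity in $L$. Your Stein--Chen route could likely be made to work, but note that your fallback hypothesis of a locally finite second factorial moment measure is \emph{not} assumed in the theorem, so the self-contained version of your argument would prove a strictly weaker statement. The Kallenberg criterion is the right abstract tool here exactly because it asks only for convergence of $p$ times the rescaled intensity measure, which ergodicity alone provides; you even mention ``invoking a thinning-limit theorem directly'' as an alternative --- that is what the paper does.
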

The proof of the above theorem is given in Section \ref{sec_dim_paradiso}.   \verde{Theorem~\ref{paradiso} will not be used to derive other results. On the other hand, combined with the results below,  it helps to  justify  (although not rigorously yet) the universality of Mott's law (cf.~Section~\ref{universo}).}

\verde{The notation $\bbP^\b$ for the law of $\o_{\ell(\b),\b}$ (see \eqref{gandhi}) when $\o$ is sampled by $\bbP$ will appear also in the rest,  for $\ell(\b)$ proportional to $\b ^{\frac{\a+1}{\a+1+d}}$  as in \eqref{canditi105}.
 We stress that in our discussion $\b>0$, hence there is no notational conflict with the use of $\bbP^0$ for the Palm distribution associated to $\bbP$.}

\rosso{As stated in Proposition \ref{paradiso_PPP} below, if $\nu \in \cvp$ or $\nu\in\cv$, then conditions (i) and (ii) of the above theorem are automatically satisfied. Moreover, for} the marked SPPs treated in Section \ref{MR_PPP}, the limit point in the weak convergence of \eqref{gandhi} described above is achieved already for $\b$ large enough:


\begin{Proposition}\label{paradiso_PPP} Let $\nu\in \cvp\cup\cv$.
  Then, for    $0<\g \leq \min\{\rot{C_\nu,\e_\nu}\} $, it holds 
\begin{itemize}
\item[(i)]    $\nu_{\star, \g} = \nu^+_{1,\a}$ if $\nu \in \cvp$ and   $\nu _{\star, \g}=\nu_{1,\a}$ if $\nu\in \cv$; 
\item[(ii)] $\nu([-\g, \g])= \g^{\a+1}/C_{\rot{\nu}}^{\a+1}$, in particular   \eqref{vichinghi100} holds with $C_*:= C_{\rot{\nu}}^{-\a-1}$\,.
\end{itemize}
 Given $\rho, \l>0$ define  $\ell(\beta)$ as in \eqref{canditi105}  with  $C_*:= C_{\magenta{\nu}} ^{-\a-1}$, i.e.
\be\label{canditi200}
 \ell(\b):= \bigl(\l \magenta{C_{\nu}^{\a+1}} /\rho\bigr)^{\frac{1}{\a+1+d}} \b ^{\frac{\a+1}{\a+1+d}}\,.
\en
 Then, for $\beta$ large \rosso{(\magenta{specifically}, when $\ell(\b)/\b\leq \min\{\rot{C_\nu,\e_\nu}\}$)}, the law \verde{$\bbP^\b$}  of \eqref{gandhi} 
with $\o$ sampled according to ${\rm PPP}[\rho,  \nu]$ \verde{is given by}
\be\label{zoomo4}
\verde{\bbP^\b=
\begin{cases}
 {\rm PPP}[\l   ,\nu_{1,\a}^+]   &\text{ if } \nu\in \cV^+_\alpha\,,\\
{\rm PPP}[\l   ,\nu_{1,\a}]   & \text{ if } \nu \in \cV_\alpha\,.
 \end{cases}}
\en
\end{Proposition}

The proof of the above proposition  is given in Section \ref{Trento}.

\verde{In the next theorem, given $\o$ with $0\in\widehat{\o}$,  we call  $C[1,1](\o)$  the connected component  containing  the origin in the graph $\cG[1,1](\o)$.}
\begin{Theorem}[\verde{Upper bound in Mott's law for generic marked SPPs}] \label{supersanto}
\verde{Let $\bbP$ be  a stationary marked SPP with finite and positive intensity $\rho$  satisfying \eqref{non_periodico} and such that     $\bbP^0( E_0 \in (-s,s))>0$ for any $s>0$.
Given $\tilde C>0$ we set
\be\label{zoomo3}
\ell(\b):= (\l \tilde C ^{\a+1}/\rho)^{\frac{1}{\a+1+d}}\b ^{\frac{\a+1}{\a+1+d}}\,.
\en
We call 
  $\bbP^\b $ the law of the configuration $\o_{\ell(\b),\b}$ when $\o$ is sampled by $\bbP$, and we denote by $ \bbE^{\b}_0 $ the expectation w.r.t.~the Palm distribution associated to  $\bbP^\b $.
  We assume that there are 
  conjugate exponents $p,q\in (1,+\infty)$ such that   
    $ \widehat{\o}([0,1]^d) ^{ q +1}$ has finite expectation w.r.t. $\bbP$ and
    \be\label{gioia77}
     \limsup_{\b\to +\infty} {\bbE}^\b_0 \bigl[ ( {\rm diam}\, C[1,1])^{2p}  \big]<+\infty
    \en
 or more generally (instead of \eqref{gioia77})     \be\label{limitino77}
 \limsup_{\b\to +\infty} \b^{-\frac{\a+1}{\a+1+d}}  \ln {\bbE}^\b_0 \bigl[ ( {\rm diam}\, C[1,1])^{2p}  \big]\leq 0\,,
  \en
   Then   for any unit vector $v\in \bbR^d$ we have 
  \be\label{pingpong} \limsup _{\b\to +\infty}  \b^{- \frac{\a+1}{\a+1+d} } \ln (v\cdot D(\b) v) 
\leq
- (\l \tilde{C}^{\a+1} /\rho )^{\frac{1}{\a+1+d}} \,.\en}
\end{Theorem}
\verde{The proof of Theorem~\ref{supersanto} is given in Section~\ref{sec_UB}. We point out that, in Theorem~\ref{supersanto},  $D(\b)$ is well defined  due to our assumptions and Remark~\ref{chitarrina} (cf.~Definition~\ref{cuore}), while the Palm distribution associated to $\bbP^\b$ is well defined due to Lemma~\ref{quinoa} in the next section.}

\begin{Theorem}[\verde{Lower bound in Mott's law for generic  marked SPPs}] 
\label{santantonio} \verde{Consider a marked SPP $\bbP$ satisfying the same assumptions of Fact~\ref{teo_eff_cond}.
Given $\tilde C>0$ let $\ell(\b)$ be defined as in \eqref{zoomo3} and call
  $\bbP^\b $  the law of the configuration $\o_{\ell(\b),\b}$ when $\o$ is sampled by $\bbP$. Suppose that $\bbP^\b$--a.s. it holds 
\be\label{squid_game_33}
\limsup_{L\to +\infty} L^{1-d}\cN_L\left( \cG[1,1]\right)> 0 
\en
or, more generally (instead of \eqref{squid_game_33}), that for $\bbP$--a.a.~$\o$ it holds
\be\label{squid_game_3}
 \liminf_{\b \to +\infty}\limsup_{L\to +\infty}  \b^{-\frac{\a+1}{\a+1+d}} \ln \left( L^{1-d}\cN_L\left( \cG[1,1](\o_{\ell(\b),\b}\right)\right)\geq  0\,.
\en
Then
\be\label{polpetta} \liminf _{\b\to +\infty}  \b^{- \frac{\a+1}{\a+1+d} } \ln \s(\b)
\geq
- (\l \tilde{C}^{\a+1} /\rho )^{\frac{1}{\a+1+d}} \,.
\en}
\end{Theorem}
  \verde{The proof of Theorem~\ref{supersanto} is given in Section~\ref{sec_LB}.}
 
\subsection{Towards universality}\label{universo} \verde{Let $\hat \bbP$ and $\nu$ be as in Theorem~\ref{paradiso}. W.l.o.g. suppose that in Item (i) of Theorem~\ref{paradiso} the convergence is towards $\nu_{1,\a}$ (the other case is similar). We  now motivate why one should expect for many $\hat\bbP$ and $\nu$ as above that, given a unit vector $v\in \bbR^d$, 
\be\label{amore1}
 \limsup _{\b\to +\infty}  \b^{- \frac{\a+1}{\a+1+d} } \ln (v\cdot D(\b) v) 
\leq
- (\l^*_c(\a) /(C_*\rho) )^{\frac{1}{\a+1+d}} \,.
\en
Take $\ell(\b)$ as in \eqref{canditi105} with $\l<\l_c^*(\a)$. By Theorem~\ref{paradiso} the law $\bbP^\b$ of $\o_{\ell(\b),\b}$ with $\o$ sampled by $\bbP$ converges to ${\rm PPP}[\l,\nu_{1,\a}]$. The distribution of  ${\rm diam} \,C[1,1]$ has exponential tail when $\o$ is sampled according to the Palm distribution associated to ${\rm PPP}[\l,\nu_{1,\a}]$ (see~Fact~\ref{patatine} below from \cite{FagMim1}). This would suggest that for a large family of $\hat\bbP$ and $\nu$ as above, $ {\bbE}^\b_0 \bigl[ ( {\rm diam}\, C[1,1])^{2p}  \big]$ is well approximated by the analogous expectation w.r.t.~the Palm distribution of  ${\rm PPP}[\l,\nu_{1,\a}]$, which is finite. This would imply \eqref{gioia77}. To apply Theorem~\ref{supersanto} it would then been enough to impose the moment condition $\bbE\left[\widehat{\o}([0,1]^d) ^{ q +1}\right]<+\infty$ and one would get \eqref{pingpong}. By the arbitrariness of $\l<\l_c^*(\a)$ we then get \eqref{amore1}.}

\verde{By similar arguments one would expect from Theorem~\ref{paradiso} and Theorem~\ref{santantonio} (together with Fact~\ref{cortocircuito} from \cite{FagMim2} and \cite[Section~3]{FH})  that for a large class of marked SPPs as in Theorem \ref{paradiso}  it holds }
\be\verde{\label{amore2} \liminf _{\b\to +\infty}  \b^{- \frac{\a+1}{\a+1+d} } \ln \s(\b)
\geq
(\l_c^*(\a) /(\rho C_*) )^{\frac{1}{\a+1+d}} \,.}
\en

\verde{In conclusion, the above arguments would suggest that for a large class of marked SPPs as in Theorem \ref{paradiso}  Mott's law \eqref{mottino}  holds with constant $\k$ given by }
\be\label{carezza}
\verde{\k=- (\l_c^*(\a) /(\rho C_*) )^{\frac{1}{\a+1+d}} \,.}
\en

\section{Some preliminary results on marked SPPs}\label{prel_marked_SPP}

The results presented in this section hold for all dimensions $d\geq 1$.

\medskip

\begin{Definition}[Constant $\nu(\g)$ and probability measure $\nu_\g$] \label{san_benedetto}
Let  $\nu$ be   a  probability measure on $\bbR$ and let  $\g\geq 0$.  We set 
\begin{equation}\label{alloro}
\nu(\g):=  \nu \bigl([ -\g , \g]\bigr)\in [0,1]\,.
 \end{equation}
 If $0$ belongs to the support of $\nu$, given $\g>0$ we define the  probability measure   $\nu_{ \g} $ as
 \be
 \nu _{\g} ( B ) =\frac{\nu ( B \cap [-\g, \g])}{\nu([-\g, \g])} \,, \qquad B \in \cB (\bbR) \,.\label{sette1}
 \en
\end{Definition}
\verde{Recall Definition \ref{stella_stellina} of $\nu_{\star,\g}$.}
Note that the  probability measures  $\nu_{ \g} $ and  $\nu_{\star,\g}$ have  support contained in   $[ -\g , \g]$ and $[-1,1]$, respectively. Moreover, it holds 
\begin{align}
& \nu _{\g} ( B )= P\big(\,   X \in B \,\big{|}\, |X |\leq \g \,\big)\,,\label{sette1_bis}\\
&   \nu _{\star,\g} ( B )= P\big(\,  (X/\g) \in B \,\big{|}\, |X/\g |\leq 1\,\big) =\nu_\g(\g B) \,,\label{sette2}
 \end{align}
where $X$ is a random variable with distribution  $\nu$ and $B\in \cB(\bbR)$.

\begin{Definition}[$ \widehat\o_\g$, $\o_\g$] \label{porto_dascoli}
Given $\o \in \O$ and $\g >0$ we define 
\begin{align}
& \widehat\o_\g:=\{x\in \widehat \o\,:\, |E_x| \leq \g\}\,,\label{flautino1}\\
& \o_\g :=\{ (x, E_x)\,:\, x\in \widehat{\o} \,, \; |E_x| \leq \g\}\,. \label{flautino2}
\end{align}
\end{Definition}
Note that $\widehat{\,\o_\g\, } = {\widehat{\o}}_\g$. 

Recall Definitions \ref{def_randomization}  and \ref{def_thinning} of $\nu$--randomization and $p$--thinning, respectively. The above  \rosso{configurations}   $\widehat\o_\g$ and $\o_\g$ are obtained from  $\widehat\o$ and $\o$ by a kind of energy-based thinning.  We now clarify the relation between the above energy-based thinning and the random one introduced in Definition  \ref{def_thinning}. 

\begin{Lemma}\label{lemma_2021} Let $\g>0$,  let $\hat\bbP$ be  the law of a SPP  on $\bbR^d$   and let $\nu$ be a probability measure on $\bbR$.
If $\o$ is sampled according to $\hat \bbP_\nu$, then $\o_\g $ has law $(\hat \bbP_p)_{\nu_\g}$ where $p:=\nu(\g)\rosso{=\nu([-\g,\g])}$.\end{Lemma}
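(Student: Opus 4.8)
The plan is to verify the claim by a direct description of the joint law of $\o_\g$ viewed as a marked point process, comparing the two constructions point by point. Recall that $\hat\bbP_\nu$ is obtained from $\hat\bbP$ by attaching to each point of $\hat\o$ an i.i.d.\ mark with law $\nu$, independently of everything; and $\o_\g$ keeps exactly those marked points whose mark lies in $[-\g,\g]$. On the other side, $(\hat\bbP_p)_{\nu_\g}$ is obtained by first performing a $p$--thinning of $\hat\bbP$ (retain each point independently with probability $p=\nu([-\g,\g])$), and then attaching to each surviving point an i.i.d.\ mark with law $\nu_\g$, the measure $\nu$ conditioned to $[-\g,\g]$.

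First I would condition on a realization $\hat\o$ of the underlying SPP, so that it suffices to check the equality of the two conditional laws of the resulting marked configuration given $\hat\o$; integrating against the law of $\hat\o$ under $\hat\bbP$ then yields the unconditional statement (here one uses that $\hat\bbP_\nu$ and $\hat\bbP_p$ share the same underlying SPP law, namely $\hat\bbP$, before marking/thinning). Given $\hat\o=\{x_i\}$, under $\hat\bbP_\nu$ the marks $(E_{x_i})_i$ are i.i.d.\ $\sim\nu$; the operation $\o\mapsto\o_\g$ is exactly: independently for each $i$, keep $x_i$ iff $E_{x_i}\in[-\g,\g]$, and if kept record the mark $E_{x_i}$. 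By the elementary fact that for a single random variable $E\sim\nu$ the pair (indicator $\mathds{1}(|E|\le\g)$, conditional value $E$ given $|E|\le\g$) has: the indicator Bernoulli$(p)$ with $p=\nu([-\g,\g])$, and the conditional law of $E$ on the event $\{|E|\le\g\}$ equal to $\nu_\g$ by \eqref{sette1_bis}, and these being independent across the points $x_i$ by the i.i.d.\ structure, the induced law on the retained, marked points is precisely: retain each $x_i$ independently with probability $p$, then give retained points i.i.d.\ marks $\sim\nu_\g$. That is exactly the conditional law of $(\hat\bbP_p)_{\nu_\g}$ given $\hat\o$.

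To present this cleanly I would phrase it via a coupling: build $\o\sim\hat\bbP_\nu$, and from the same randomness extract the thinning coins $\chi_i:=\mathds{1}(|E_{x_i}|\le\g)$ and, on $\{\chi_i=1\}$, the marks $E_{x_i}$; then observe (a) the SPP $\{x_i:\chi_i=1\}$ has law $\hat\bbP_p$ since the $\chi_i$ are i.i.d.\ Bernoulli$(p)$ independent of $\hat\o$, which matches Definition \ref{def_thinning}; (b) conditionally on this thinned SPP, the retained marks are i.i.d.\ $\sim\nu_\g$ and independent of it — this is the only genuinely substantive verification, and it follows from the fact that conditioning i.i.d.\ variables on the individual events $\{\chi_i=1\}$ leaves them independent with the common conditional law $\nu_\g$. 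Hence $\o_\g$ has law $(\hat\bbP_p)_{\nu_\g}$, as claimed.

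The main (mild) obstacle is purely a matter of bookkeeping rather than mathematics: one must be careful that the $p$--thinning in Definition \ref{def_thinning} is defined with coins independent of the SPP, and argue that the $\chi_i$ genuinely have this independence — which holds because the marks $E_{x_i}$ in $\hat\bbP_\nu$ are drawn independently of $\hat\o$ — and that the measurability issues (the number of points may be infinite, indexing of points, etc.) are handled by the standard descriptions of $\hat\bbP_\nu$ and $\hat\bbP_p$ recalled in Definitions \ref{def_randomization} and \ref{def_thinning}. Once these formalities are in place, the identity of laws is immediate from the one-dimensional computation on a single mark together with independence.
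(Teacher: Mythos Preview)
Your proof is correct. Both your argument and the paper's hinge on conditioning on $\hat\o$ and exploiting the i.i.d.\ product structure of the marks, but the formalizations differ: the paper appeals to the Laplace-functional characterization of point processes (\cite[Lemma~12.1]{Kal}) and verifies
\[
\int d\hat\bbP_\nu(\o)\,e^{-\sum_{x\in\widehat\o_\g}f(x,E_x)}=\int d(\hat\bbP_p)_{\nu_\g}(\o)\,e^{-\sum_{x\in\widehat\o}f(x,E_x)}
\]
by a direct product computation, whereas you argue via a coupling, matching conditional laws given $\hat\o$ using the elementary fact that for i.i.d.\ $E_{x_i}\sim\nu$ the indicators $\chi_i=\mathds{1}(|E_{x_i}|\le\g)$ are i.i.d.\ Bernoulli$(p)$ and, conditionally on $(\chi_i)_i$, the retained marks are i.i.d.\ $\nu_\g$. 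Your route is more probabilistic and arguably more transparent, at the cost of the measurability and independence bookkeeping you flag (which is indeed routine); the paper's Laplace-functional route packages this bookkeeping into a single cited characterization and reduces the proof to a one-line algebraic identity.
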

\begin{proof} 
By  \cite[Lemma~12.1]{Kal} we just need to prove that, 
given  any measurable function $f: \bbR^d\times\bbR\to [0,+\infty)$,  the random variable $\sum_{x\in \widehat{\o}_\g }f(x, E_x)$ with $\o$ sampled according to $\hat \bbP_\nu$ has the same law of the random variable $\sum_{x\in \widehat{\o} }f(x, E_x)$ with 
 $\o$ sampled according to $(\hat \bbP_p)_{\nu_\g}$.  \rosso{Moving to the Laplace transform}, it is enough to show that,  for any $f$ as above, it holds 
%
%
 \be\label{gente} \int_\O d \hat \bbP_\nu(\o)
 e^{- \sum_{x\in \widehat \o_\g} f(x,E_x)}  =\int _\O d (\hat \bbP_p)_{\nu_\g} (\o) e^{-\sum_{x\in \widehat \o} f(x,E_x)} \,.
 \en
\rosso{ Since $e^{- \sum_{x\in \widehat \o_\g} f(x,E_x)}=\prod_{x\in \widehat \o} e^{- f(x,E_x) \mathds{1}( |E_x| \leq \g )  }$}, it is easy to rewrite \eqref{gente}  as \begin{multline*}
\int _{\hat \O} d \hat \bbP(\hat \o) \prod_{x\in \hat \o} \Big[ \nu( [ -\g, \g]^c)+\int_{[-\g, \g]} \nu( dE)  e^{- f(x,E) }\Big]\\ = \int _{\hat \O} d \hat \bbP(\hat \o) \prod_{x\in \hat \o}     \Big[ 1-p+ p\int_{[-\g, \g]} \nu_\g ( dE)  e^{- f(x,E) }   \Big]  \,.
\end{multline*}
The claim follows from the identity  $p=\nu([-\g,\g])$ and the definition of $\nu_\g$.
\end{proof}

\verde{The following result will be applied to $\bbP=\hat\bbP_\nu$ in the next section, but the arguments used in its proof are completely general, hence we state it in full generality.}
\begin{Lemma}\label{quinoa} Let $\bbP$ 
be the law of a stationary marked SPP with   intensity $\rho\in (0,+\infty)$ and let 
 $\vpz$ be the Palm distribution associated to    $\bbP$.  Fix $\g>0$ and 
    denote by $\bbQ$ the law of $\o_\g$ when $\o$ is sampled according to $\bbP$. 
Assume that   $\bbP( \o_\g \not =\emptyset)>0$.
Then $\bbQ $ is a stationary marked SPP  and \orc{its intensity, denoted by $\rho_\g$, is finite and positive}. Moreover, 
$\vpz(0\in \rosso{\widehat\o}_\g)=\rho_\g/\rho $ and 
 the Palm distribution $\vqz$ associated  to  $\bbQ $
equals 
  the law of $\o_\g$,  with $\o$ sampled according to $\vpz(\cdot | 0\in \rosso{\widehat\o}_\g)$.
\end{Lemma}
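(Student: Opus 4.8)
The plan is to verify each assertion in turn, reducing everything to Campbell's formula \eqref{campanello} and elementary manipulations. First I would observe that $\bbQ$ is stationary: the operation $\o \mapsto \o_\g$ commutes with every translation $\t_x$ (since $\t_x$ does not change the energy marks, only the spatial positions), so $\t_x(\o_\g) = (\t_x\o)_\g$, and stationarity of $\bbP$ passes directly to $\bbQ$. Next I would compute the intensity $\rho_\g := \bbE_{\bbQ}[\widehat{\o}([0,1]^d)]$. By definition of $\bbQ$ this equals $\bbE_{\bbP}\bigl[\widehat{\o}_\g([0,1]^d)\bigr] = \bbE_{\bbP}\bigl[\#\{x\in\widehat\o\cap[0,1]^d : |E_x|\le\g\}\bigr]$. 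Finiteness is immediate from $\rho_\g \le \rho<+\infty$; positivity follows from the hypothesis $\bbP(\o_\g\neq\emptyset)>0$ combined with stationarity (if $\rho_\g=0$ then $\bbP$-a.s.\ $\widehat\o_\g$ is empty in every unit box, hence $\widehat\o_\g=\emptyset$ a.s., contradicting the hypothesis).

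Then I would establish $\vpz(0\in\widehat\o_\g) = \rho_\g/\rho$. Apply Campbell's formula \eqref{campanello} with the test function $f(x,\o) := \mathds{1}_{[0,1]^d}(x)\,\mathds{1}(0\in\widehat\o_\g)$, noting that $0\in\widehat{(\t_x\o)}_\g$ exactly when $x\in\widehat\o$ and $|E_x|\le\g$. The right-hand side becomes $\frac1m\,\bbE_{\bbP}\bigl[\#\{x\in\widehat\o\cap[0,1]^d:|E_x|\le\g\}\bigr] = \rho_\g/m \cdot \ell([0,1]^d)$ up to the normalization (here $m=\rho$), while the left-hand side is $\ell([0,1]^d)\,\vpz(0\in\widehat\o_\g)$; comparing gives the claim. (One must be slightly careful that the indicator $\mathds{1}(0\in\widehat\o_\g)$ is $\vpz$-measurable and that $\vpz$ is concentrated on $\O_0$, so $0\in\widehat\o$ always; only the condition $|E_0|\le\g$ is at issue.)

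Finally, and this is where the real content lies, I would identify the Palm distribution $\vqz$ of $\bbQ$ with the law of $\o_\g$ under $\vpz(\,\cdot\mid 0\in\widehat\o_\g)$. The cleanest route is to check the defining formula \eqref{marlena} for $\bbQ$: for $A\in\cB(\O)$,
\[
\vqz(A) = \frac{1}{\rho_\g}\int_{\O}d\bbQ(\o)\int_{[0,1]^d}d\widehat\o(x)\,\mathds{1}_A(\t_x\o)
= \frac{1}{\rho_\g}\int_{\O}d\bbP(\o)\int_{[0,1]^d}d\widehat{\o_\g}(x)\,\mathds{1}_A(\t_x\o_\g)\,,
\]
where the second equality just unwinds the definition of $\bbQ$. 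Now $\int d\widehat{\o_\g}(x)$ sums over $x\in\widehat\o$ with $|E_x|\le\g$ and $x\in[0,1]^d$, so I can rewrite this as $\frac{1}{\rho_\g}\int d\bbP(\o)\sum_{x\in\widehat\o\cap[0,1]^d}\mathds{1}(|E_x|\le\g)\,\mathds{1}_A((\t_x\o)_\g)$, using the commutation $\t_x\o_\g = (\t_x\o)_\g$ together with the fact that translating does not change whether $|E_x|\le\g$. Applying Campbell's formula \eqref{campanello} in reverse with $f(x,\o):=\mathds{1}_{[0,1]^d}(x)\,\mathds{1}(0\in\widehat\o_\g)\,\mathds{1}_A(\o_\g)$ turns the right-hand side into $\frac{\rho}{\rho_\g}\int_{\O_0}d\vpz(\o)\,\mathds{1}(0\in\widehat\o_\g)\,\mathds{1}_A(\o_\g) = \frac{1}{\vpz(0\in\widehat\o_\g)}\int d\vpz(\o)\,\mathds{1}(0\in\widehat\o_\g)\,\mathds{1}_A(\o_\g)$, which is exactly the law of $\o_\g$ under $\vpz(\,\cdot\mid 0\in\widehat\o_\g)$ evaluated on $A$. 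By uniqueness of the Palm distribution this proves the claim. The main obstacle is bookkeeping: making sure the indicator $\mathds{1}(0\in\widehat\o_\g)$ is inserted in exactly the right place so that the sum over $x\in\widehat\o\cap[0,1]^d$ is restricted to those $x$ with $|E_x|\le\g$, and checking that $\o\mapsto\o_\g$ is measurable from $(\O,\cB(\O))$ to itself (which follows from the characterization of $\cB(\O)$ by the maps $\o\mapsto\o(A)$, since $\o_\g(A) = \o(A\cap(\bbR^d\times[-\g,\g]))$).
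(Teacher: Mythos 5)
Your proof is correct and follows essentially the same route as the paper: apply the defining Campbell formula for $\vqz$ (equivalently, \eqref{marlena} or \eqref{campanello} for $\bbQ$), unwind the definition of $\bbQ$ to get a sum over $x\in\widehat\o\cap[0,1]^d$ with the indicator $\mathds{1}(|E_x|\le\g)$, and then reapply Campbell for $\bbP$ with the test function $\mathds{1}_{[0,1]^d}(x)\,\mathds{1}(0\in\widehat\o_\g)\,\mathds{1}_A(\o_\g)$ to convert back into an integral against $\vpz$. The only cosmetic differences are that the paper works with a generic nonnegative test function $h$ rather than indicators $\mathds{1}_A$ and obtains $\vpz(0\in\widehat\o_\g)=\rho_\g/\rho$ as the special case $h\equiv 1$ of the main identity rather than separately.
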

\begin{proof}  \rosso{We recall  our notation \eqref{flautino1}:  $\widehat\o_\g:=\{x\in \widehat \o\,:\, |E_x| \leq \g\}$
(note that $\widehat{\o_\g}=\widehat{\o}_\g$).}
The stationary of $\bbQ $ is trivial. The intensity $\rho_\g$  of $\bbQ $  is finite since  \rot{it is} upper bounded by the intensity  $\rho$ of $\bbP$. If  $\rho_\g $ was zero, then by stationarity we would have $\int_\O d\bbP(\o)  \widehat{\o}_\g (A)=0 $ for any $A\in \cB(\bbR^d)$. This would imply   that $\bbP( \o_\g =\emptyset)=1$, thus contradicting our assumption.

It remains to  prove the last statement in the lemma concerning Palm distributions.
To this aim let $h : \O_0 \to \bbR$ be a nonnegative measurable function. 
We  use Campbell's identity \eqref{campanello}  with  $\bbP$ replaced by $\bbQ$ and with test function $f(x,\o):= \mathds{1}_{[0,1]^d}(x) h(\o)$. We then get 
\be\label{sonnone1}
\begin{split}
\int _{\O_0} d \vqz (\o) h(\o) & = \frac{1}{\rho_\g} \int _\O d \bbQ (\o) \sum_{x\in \widehat{\o}\cap [0,1]^d} h(\t_x \o)\\
& = \frac{1}{\rho_\g} \int _\O d \bbP(\o) \sum_{x\in \widehat{\o}_\g \cap [0,1]^d}  h\bigl(\t_x (\o_\g)\bigr) \\
&= \frac{1}{\rho_\g} \int _\O d \bbP(\o) \sum_{x\in \widehat{\o}\cap [0,1]^d}h\bigl(\t_x (\o_\g) \bigr)\mathds{1}(|E_x|\leq \g) \,.
\end{split}
\en

We now apply Campbell's identity   \rosso{\eqref{campanello}  with} $f(x,\o)=  \mathds{1}_{[0,1]^d}(x) h(\o_\g) \mathds{1} ( 0\in \widehat{\o}_\g)$. Using that $(\t_x \o)_\g=\t_x (\o_\g)$, we get 
\begin{equation}\label{sonnone2}
   \int _{\O_0} d\vpz ( \o) h(\o_\g) \mathds{1}(0\in \widehat{\o}_\g )  =\frac{1}{\rho} \int _{\O}d \bbP(\o)  \sum_{x\in \widehat{\o}\cap [0,1]^d}   h\bigl(\t_x(\o_\g)\bigr) \mathds{1}(|E_x| \leq \g) \,.
 \end{equation}
 In the special case $h\equiv 1$  in \eqref{sonnone2} we get
 \begin{equation}\label{sonnone3}
   \vpz(0\in \widehat{\o}_\g ) 
   =  \frac{1}{\rho} \int _{\O}d \bbP(\o)  \sum_{x\in \widehat{\o}\cap [0,1]^d}   \mathds{1}(|E_x| \leq \g)=
   \frac{\bbE\bigl[ \widehat{\o}_\g([0,1]^d)\big]}{\rho}
 = \frac{\rho_\g}{\rho}\,. 
 \end{equation}
 By combining \eqref{sonnone1}, \eqref{sonnone2} and \eqref{sonnone3} we get 
 \be
 \int _{\O_0} d \vqz (\o) h(\o)=\frac{1}{ \vpz (0\in \widehat \o_\g ) }  \int _{\O_0} d\vpz ( \o) h(\o_\g) \mathds{1}(0\in \widehat \o_\g ) \,,
 \en
 thus proving our final claim in Lemma \ref{quinoa}.
\end{proof}

\verde{Rescaling arguments  will reveal crucial. To this aim we introduce the following:}
\begin{Definition}[Homothety $\Psi_z$ \verde{and law $\hat \bbP\circ \Psi_z ^{-1}$}] \label{def_zaino}
Given $z >0$ we define $\Psi_z: \bbR^d\to \bbR^d$ as the map $\Psi_z (x) := x/z$. \verde{If we have} a SPP $\hat{\xi}$ on $\bbR^d$ with law $\hat\bbP$,  we write $\hat \bbP\circ \Psi_z ^{-1}$  for the law of $\psi_z(\hat{\xi})=\{x/z\,:\, x\in \hat\xi\}$, \magenta{i.e.}
$\bigl(\hat \bbP\circ \Psi_z ^{-1}\bigr)(B):= \hat \bbP\bigl( \Psi_z^{-1} (B)\bigr) $ for all $B \in \cB (\hat\O)$.
\end{Definition}

 Recall \eqref{alloro},  \eqref{sette1}, \eqref{sette2} and \eqref{flautino2}.
 The following technical fact will be frequently used in the rest:
 \begin{Lemma}
\label{fadeev_250}  Let  $\z,\b,\rho >0$, let $\nu$ be a  probability measure   on $\bbR$ and let $\hat\bbP$ be a  SPP.
   Sample 
 $\o$  according to   $\hat \bbP_\nu$ and    set  $p:=\nu( \z/\b)$. 
 Then
$\o_{\z/\b}$ has law $(\hat \bbP_p)_{\nu_{\z/\b}} $ 
and 
  \be\label{box_pokemon}
  \o_{\z,\b}:=\left\{ \big (x/\z, \b E_x/\z\big) \,: \,  x\in \widehat{\o}_ {\z/\b} \right \}
\en
  has law  $(\hat \bbP_p\circ \Psi_{\z}^{-1})_{\nu_{\star,  \z/\b}}$.  In particular, if    $\hat \bbP={\rm PPP}[\rho]$ and $\hat \bbP_\nu={\rm PPP}[\rho,\nu]$, then 
 \[ 
  (\hat \bbP_p)_{\nu_{\z/\b}} ={\rm PPP}[\rho \nu(\z/\b) ,\nu_{\z/\b}]\qquad 
   (\hat \bbP_p\circ \Psi_{\z}^{-1})_{\nu_{\star,  \z/\b}}= {\rm PPP}[\rho \nu(\z/\b)\z^d   ,\nu_{\star,  \z/\b}] \,.
  \]
\end{Lemma}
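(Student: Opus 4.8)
The plan is to reduce everything to the already-established Lemma~\ref{lemma_2021} and Proposition~\ref{cotechino}, the only new ingredient being a change of variables. First I would observe that the first assertion is nothing but Lemma~\ref{lemma_2021} applied with $\g:=\z/\b$: since $p=\nu(\z/\b)=\nu([-\z/\b,\z/\b])$, that lemma directly gives that $\o_{\z/\b}$ has law $(\hat\bbP_p)_{\nu_{\z/\b}}$.

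For the second assertion I would use the description of a $\nu_{\z/\b}$--randomization from Definition~\ref{def_randomization}: one samples $\hat\o'$ according to the $p$--thinning $\hat\bbP_p$ and attaches to the points of $\hat\o'$ i.i.d. marks with common law $\nu_{\z/\b}$, independently of $\hat\o'$. By \eqref{box_pokemon}, $\o_{\z,\b}$ is the image of $\o_{\z/\b}$ under the map $(x,E)\mapsto(x/\z,\b E/\z)$. On the spatial coordinate this is the homothety $\Psi_\z$ of Definition~\ref{def_zaino}, so the point set of $\o_{\z,\b}$ is $\Psi_\z(\hat\o')$, which by definition has law $\hat\bbP_p\circ\Psi_\z^{-1}$. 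On the energy coordinate each mark gets mapped deterministically, and independently of everything else, by $E\mapsto\b E/\z$; hence the marks of $\o_{\z,\b}$ remain i.i.d. and independent of the point set, with common law the push-forward of $\nu_{\z/\b}$ under $E\mapsto\b E/\z$. The point to check is that this push-forward equals $\nu_{\star,\z/\b}$, but this is exactly \eqref{sette2} (which reads $\nu_{\star,\g}(B)=\nu_\g(\g B)$, here with $\g=\z/\b$, so that $\b E/\z$ has law $\nu_{\star,\z/\b}$ when $E$ has law $\nu_{\z/\b}$). Therefore $\o_{\z,\b}$ is the $\nu_{\star,\z/\b}$--randomization of $\hat\bbP_p\circ\Psi_\z^{-1}$, i.e.\ it has law $(\hat\bbP_p\circ\Psi_\z^{-1})_{\nu_{\star,\z/\b}}$.

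For the last assertion I would specialize to $\hat\bbP={\rm PPP}[\rho]$. By Proposition~\ref{cotechino}(i) the $p$--thinning is $\hat\bbP_p={\rm PPP}[p\rho]$, so its $\nu_{\z/\b}$--randomization is ${\rm PPP}[p\rho,\nu_{\z/\b}]={\rm PPP}[\rho\nu(\z/\b),\nu_{\z/\b}]$, using $p=\nu(\z/\b)$. Next, $\Psi_\z$ rescales each point $x$ into $x/\z$, so by Proposition~\ref{cotechino}(ii) we get $\hat\bbP_p\circ\Psi_\z^{-1}={\rm PPP}[\z^d p\rho]$, whence its $\nu_{\star,\z/\b}$--randomization is ${\rm PPP}[\z^d p\rho,\nu_{\star,\z/\b}]={\rm PPP}[\rho\nu(\z/\b)\z^d,\nu_{\star,\z/\b}]$, which is the second displayed identity. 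I do not expect any genuine obstacle here: the whole argument is a routine change of variables, the only mild care being to notice that the rescaling of positions commutes with the independent marking and that the mark rescaling $E\mapsto\b E/\z$ sends $\nu_{\z/\b}$ to $\nu_{\star,\z/\b}$, which is precisely identity \eqref{sette2}.
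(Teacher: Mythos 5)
Your argument is correct and follows essentially the same route as the paper: the first claim is Lemma~\ref{lemma_2021} with $\g=\z/\b$, the second is the observation (via \eqref{sette1_bis} and \eqref{sette2}) that the deterministic mark rescaling $E\mapsto\b E/\z$ pushes $\nu_{\z/\b}$ to $\nu_{\star,\z/\b}$ while the spatial rescaling is $\Psi_\z$, and the PPP case follows from Proposition~\ref{cotechino}. The only difference is that you spell out the commutation of independent marking with position rescaling a bit more explicitly than the paper does.
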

Note that $\o_{\z,\b}$ equals \eqref{gandhi} when $\z=\ell(\b)$.
\begin{proof}
  By sampling $\o$ according to $\hat\bbP_\nu$, $\o_{\z/\b}$ has law $(\hat \bbP_p)_{\nu_{\z/\b}} $  by    Lemma \ref{lemma_2021}.
The fact that $\o_{\z,\b}$ has law $(\hat \bbP_p\circ \Psi_{\z}^{-1})_{\nu_{\star,  \z/\b}}$ follows  in particular from the observation that  if   the random variable  $E$ has law  $\nu$, then conditioning to the event  $|E|\leq \z/\b$ the random variable 
$\b E /\z $ has law $\nu_{\star,  \z/\b}$ (see  \eqref{sette1_bis} and  \eqref{sette2}).
The conclusion concerning marked PPP's follows from 
 Proposition \ref{cotechino}. 
\end{proof}

\subsection{Convergence of marked SPPs}
\verde{The results in this subsection are used only in the proof of Theorem~\ref{paradiso}.}
\begin{Lemma}\label{weak_conv}
Consider a sequence of SPPs on $\bbR^d$  with laws $\pn$ and a sequence of probability measures $\mu_n$ on $\bbR$. Suppose that there exists a SPP   with law $\hat\bbP$ and a  probability measure $\mu$ on $\bbR$ such that  
$ \pn \,\Rightarrow \, \hat \bbP$  and $\mu_n \,\Rightarrow \, \mu$ as $n\to +\infty$.
Suppose also that $\int_{\hat \O}  d \hat \bbP(\hat \o ) \hat \o (\partial A)=0$ for any box $A\subset \bbR^d$.
Then $
 ( \pn)_{\mu_n} \,\Rightarrow \, \hat \bbP_\mu$ as $n\to +\infty$.
\end{Lemma}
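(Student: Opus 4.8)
The plan is to verify weak convergence through Laplace functionals. For (marked) simple point processes on $\bbR^{d+1}=\bbR^d\times\bbR$ one has $Q_n\Rightarrow Q$ if and only if $\int e^{-\o[f]}\,dQ_n(\o)\to\int e^{-\o[f]}\,dQ(\o)$ for every continuous $f:\bbR^d\times\bbR\to[0,\infty)$ with bounded support (cf.~\cite[Ch.~11]{DV}); since $(\pn)_{\mu_n}$ and $\hat\bbP_\mu$ are concentrated on the Borel subset $\O\subset\hat\O_{d+1}$, it suffices to check this. Fix such an $f$. Exactly as in the Laplace-transform computation in the proof of Lemma~\ref{lemma_2021}, conditioning on the underlying point configuration and using the independence of the marks gives
\begin{align*}
\int_{\O} e^{-\o[f]}\,d(\pn)_{\mu_n}(\o)
&=\int_{\hat\O}\Big(\prod_{x\in\hat\o}\int_{\bbR}e^{-f(x,E)}\,\mu_n(dE)\Big)\,d\pn(\hat\o)\\
&=\int_{\hat\O}e^{-\hat\o[h_n]}\,d\pn(\hat\o)\,,\qquad h_n(x):=-\log\!\int_{\bbR}e^{-f(x,E)}\,\mu_n(dE),
\end{align*}
and likewise $\int_\O e^{-\o[f]}\,d\hat\bbP_\mu(\o)=\int_{\hat\O}e^{-\hat\o[h]}\,d\hat\bbP(\hat\o)$ with $h(x):=-\log\int_\bbR e^{-f(x,E)}\,\mu(dE)$. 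Since $0\le f\le\|f\|_\infty$, each $h_n$ (and $h$) is nonnegative, bounded, continuous, and vanishes off the compact set $K:=\pi_{\bbR^d}(\mathrm{supp}\,f)$ (if $x\notin K$ then $f(x,\cdot)\equiv0$, so the inner integral equals $1$).

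The next step is to show $h_n\to h$ uniformly on $\bbR^d$. The map $(x,\sigma)\mapsto\int_\bbR e^{-f(x,E)}\,\sigma(dE)$, defined on $K\times\mathcal P(\bbR)$ with $\mathcal P(\bbR)$ carrying the weak topology, is sequentially continuous: for $x_n\to x$ and $\sigma_n\Rightarrow\sigma$ the difference is at most $\sup_E|e^{-f(x_n,E)}-e^{-f(x,E)}|+\big|\int_\bbR e^{-f(x,E)}(\sigma_n-\sigma)(dE)\big|$, and the first term tends to $0$ by the uniform continuity of $f$, the second by $\sigma_n\Rightarrow\sigma$. Restricting to the compact metric space $K\times(\{\mu_n:n\ge1\}\cup\{\mu\})$, uniform continuity yields $\sup_{x\in K}\big|\int e^{-f(x,E)}\mu_n(dE)-\int e^{-f(x,E)}\mu(dE)\big|\to0$; since both integrals equal $1$ off $K$ and are $\ge e^{-\|f\|_\infty}$ on $K$ while $-\log$ is Lipschitz on $[e^{-\|f\|_\infty},1]$, this gives $\|h_n-h\|_\infty\to0$.

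To conclude, fix a closed box $A\supseteq K$; by hypothesis $\hat\bbP(\hat\o(\partial A)=0)=1$. If $\hat\o_n\to\hat\o$ in $\hat\O$ with $\hat\o(\partial A)=0$, then $\hat\o_n(A)\to\hat\o(A)$ (the portmanteau-type property for this topology on locally finite measures: vague convergence plus null mass on $\partial A$), hence
\[
|\hat\o_n[h_n]-\hat\o[h]|\le\|h_n-h\|_\infty\,\hat\o_n(A)+|\hat\o_n[h]-\hat\o[h]|\longrightarrow0
\]
because $h$ is continuous with bounded support, and therefore $e^{-\hat\o_n[h_n]}\to e^{-\hat\o[h]}$. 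Since the functions $\hat\o\mapsto e^{-\hat\o[h_n]}$ are bounded by $1$ and $\hat\o\mapsto e^{-\hat\o[h]}$ is continuous, the extended continuous mapping theorem applied to $\pn\Rightarrow\hat\bbP$ gives $\int e^{-\hat\o[h_n]}\,d\pn\to\int e^{-\hat\o[h]}\,d\hat\bbP$, which is exactly the convergence of Laplace functionals required. (Alternatively one splits off $\int e^{-\hat\o[h]}(d\pn-d\hat\bbP)\to0$ and bounds $\int|e^{-\hat\o[h_n]}-e^{-\hat\o[h]}|\,d\pn\le M\|h_n-h\|_\infty+\pn(\hat\o(A)>M)$ for each $M$, using $\limsup_n\pn(\hat\o(A)>M)\le\hat\bbP(\hat\o(A)\ge M+1)\to0$ via upper semicontinuity of $\hat\o\mapsto\hat\o(A)$ and the portmanteau theorem.)

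The step I expect to be the main obstacle is the uniform convergence $h_n\to h$: the weak convergence $\mu_n\Rightarrow\mu$ only yields it pointwise in $x$, and upgrading it to uniformity rests on the equicontinuity of the family $\{e^{-f(x,\cdot)}\}_{x\in K}$ — i.e.\ on the uniform continuity of $f$ — together with the compactness of $K$ and of $\{\mu_n:n\ge1\}\cup\{\mu\}$ in $\mathcal P(\bbR)$. Everything else is bookkeeping: the Laplace-functional rewriting is the one already carried out in Lemma~\ref{lemma_2021}, and in the final passage to the limit the hypothesis $\int\hat\o(\partial A)\,d\hat\bbP=0$ intervenes only to make $\hat\o\mapsto\hat\o(A)$ almost surely continuous.
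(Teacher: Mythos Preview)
Your proof is correct and takes a genuinely different route from the paper's. Both proofs start from the Laplace-functional criterion (Kallenberg~Thm.~16.16), but you then exploit the product structure of the $\nu$-randomization to rewrite $\int e^{-\o[f]}\,d(\pn)_{\mu_n}=\int e^{-\hat\o[h_n]}\,d\pn$ with $h_n(x)=-\log\int e^{-f(x,E)}\mu_n(dE)$, establish the uniform convergence $h_n\to h$ via equicontinuity on the compact set $K\times(\{\mu_n\}\cup\{\mu\})$, and finish with the extended continuous mapping theorem. The paper instead approximates $f$ from above and below by step functions $g=\sum c_i\mathds{1}_{A_i\times B_i}$ with $\mu(\partial B_i)=0$, reduces to finite-dimensional convergence of the vector $(\hat\o(A_1),\dots,\hat\o(A_k))$, and closes with a sandwich argument. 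Your approach is cleaner and more conceptual---it avoids the approximation layer entirely and uses the boundary hypothesis only once (to make $\hat\o\mapsto\hat\o(A)$ almost surely continuous for a single box $A\supseteq K$)---at the price of invoking the extended continuous mapping theorem; the paper's argument is more elementary in that it needs nothing beyond finite-dimensional weak convergence and dominated convergence, but it requires more bookkeeping (disjointifying the $A_i$, handling the $\delta$-perturbation, and controlling the approximation error).
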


Above  the symbol $\Rightarrow$ refers to weak convergence of probability measures \rosso{\cite{Bi}. When $\mu_n= \mu$ for all $ n$, the above result follows from \cite[Exercise~15,\,Chp.~16]{Kal}.}

 \begin{Remark}\label{condimento}The condition $\int_{\hat \O}  d \hat \bbP(\hat \o ) \hat \o (\partial A)=0$  for boxes $A\subset \bbR^d$  is satisfied in particular by any stationary \magenta{$\hat\bbP$} with finite intensity, as the Lebesgue measure gives zero mass to $(d-1)$--dimensional sets. 
 \end{Remark}

\begin{proof} Below we identify the configurations $\hat\o\in \hat\O$ and $\o\in \O$ with the atomic measures $\sum_{x\in \hat\o} \d_x$ and $\sum_{x\in \widehat \o} \d_{(x,E_x)}$ respectively, keeping the same names $\hat\o$ and $ \o$. In particular, $\o[f]=\sum_{x\in \widehat{\o} }f(x,E_x)$. Moreover, we denote by $f\in C^+_c(\bbR^{d+1})$ the family of 
  continuous functions $f\geq 0$ on $\bbR^{d+1}$ with compact support.

 A marked SPP on $\bbR^d$ is in particular a SPP  on $\bbR^{d+1}$, thus allowing to apply \cite{Kal}[Thm.~16.16]. 
 As a byproduct  with the properties of Laplace transform,
  to prove that $(\pn)_{\mu_n} \,\Rightarrow \,  \hat \bbP_\mu$ it is enough to show \rosso{for any $f\in C_c^+(\bbR^{d+1})$} that 
 \be\label{prati} \int d(\pn)_{\mu_n} (\o) e^{- \o [f]} \to  \int d\hat \bbP _{\mu} (\o) e^{- \o [f]}  \;\; \text{ as }n\to+\infty\,.
 \en

 \rosso{Let 
$\cG$ be the class of functions  
$g$ of the 
 form $g=\sum_{i=1}^N c_i \mathds{1}_{A_i \times B_i}$, where all  $c_i$'s are positive  numbers, 
 all $A_i\times B_i$ are pairwise disjoint, 
  all $A_i $  are boxes of the form $\prod_{k=1}^d [a_k,b_k)$ and all $B_i$ are intervals of $\bbR$ whose extremes have zero $\mu$-measure. For any $\e>0$ one can find $f^{\pm }_\e\in \cG$  with support included in the neighborhood of the support of $f$ with radius $1$, such that $f^-_\e\leq f \leq f^+_\e$ and  $\|f- f^\pm _\e\|_\infty \to 0$ as $\e\da 0$. 
  Hence, by dominated convergence, $\int d \hat \bbP_{\mu} (\o) e^{- \o [f^\pm_\e]}\to \int d \hat \bbP_{\mu} (\o) e^{- \o [f]}$ as $\e \da 0$, and then one 
  easily \magenta{derives}  \eqref{prati} with    $f\in C_c^+(\bbR^{d+1}) $  once getting it for any
  $f\in \cG$.
 }

Let us now   prove \eqref{prati}
for any given $f=\sum_{i=1}^N c_i \mathds{1}_{A_i \times B_i}$ in $\cG$. At cost to split each  $A_i$ into smaller boxes, 
  we assume that $A_1,A_2, \dots, A_k$ are pairwise disjoint, while $A_{k+1},\dots, A_N$ equal some sets between $A_1,A_2, \dots, A_k$. Hence 
  we can rearrange the sum to write $f$ in  the form 
$
 f(x,E)=\sum_{i=1}^k \mathds{1}_{A_i}(x)  g_i(E)$, where $ g_i (E) := \sum _{\substack{j: 1\leq j \leq N\\
 A_j=A_i}} c_j \mathds{1}_{B_j} (E)$.
Then  we have 
\be\label{sole}
\begin{split}
 \int d(\pn)_{\mu_n} (\o) e^{- \o [f]} 
& =
 \int d \pn (\hat \o)\exp\Big\{ \sum_{i=1}^k \hat \o (A_i)  \ln \mu_n[ e^{-g_i}] \Big\}\,.
\end{split}
\en
For each $i=1,\dots, k$, \rosso{$g_i:\bbR\to \bbR$ is a piecewise-constant function, with a finite family of   discontinuity points having zero $\mu$-mass. Hence   $\mu_n [ e^{-g_i}]\to \mu[e^{-g_i}]$  as $n\to \infty$.} 
Hence, fixed $\d>0$, for $n$ large enough we have
\begin{multline}\label{fulmini}
 \int d \pn (\hat \o)\exp\Big\{ \sum_{i=1}^k \hat \o (A_i)  (\ln \mu [ e^{-g_i}] -\d) \Big\}
\leq 
\int d(\pn)_{\mu_n} (\o) e^{- \o [f]} 
\\ \leq 
 \int d \pn (\hat \o)\exp\Big\{ \sum_{i=1}^k \hat \o (A_i)  \{ \ln \mu[ e^{-g_i}] +\d \}_- \Big\}\,.
\end{multline}
For any  box $A_i$ it holds  $\hat \o (\partial A_i)=0$ for $\hat \bbP$--a.a.~$\hat \o$ as 
  $\int d \hat \bbP(\hat \o ) \hat \o (\partial A_i)=0$ by assumption.  Since in addition the sets $A_i$ are relatively compact, 
 we can apply \cite[Thm.~16.16]{Kal} getting that  the vector $(\hat \o (A_1), \hat \o (A_2), \dots , \hat \o (A_k))$ with $\hat \o$ sampled according to $\pn$  converges in distribution to the same vector with $\hat \o$  sampled now according to $\hat\bbP$. \rosso{This implies the convergence of expectations of bounded continuous functions applied to the random  vectors. Hence we get}
  \begin{align}
 & \lim_{n\to +\infty} \text{l.h.s. of \eqref{fulmini}}= \int d \hat\bbP (\hat \o)\exp\big\{ \sum_{i=1}^k \hat \o (A_i)  (\ln \mu [ e^{-g_i}] -\d) \big\}\,, \label{merenda1}\\
  & \lim_{n\to +\infty} \text{r.h.s. of \eqref{fulmini}}= \int d \hat\bbP (\hat \o)\exp\big\{ \sum_{i=1}^k \hat \o (A_i)  \{ \ln \mu[ e^{-g_i}] +\d \}_- \big\}\,.\label{merenda2}
\end{align}
By dominated convergence,  the limits as $\d\da 0$ of \eqref{merenda1} and \eqref{merenda2} both equal
\[ \int d \hat\bbP (\hat \o)\exp\big\{ \sum_{i=1}^k \hat \o (A_i)   \ln \mu[ e^{-g_i}]  \big\}=\int d\hat \bbP_{\mu} (\o) e^{- \o [f]} \,.
\]
 By the above observations, taking the limit $n\to +\infty$ and afterwards the limit  $\d \da 0$ in \eqref{fulmini}, we get \eqref{prati}.
  \end{proof}

\verde{By applying Lemma~\ref{weak_conv} we get the following result:}
\begin{Lemma}\label{la_bomba}
Fix $c>0$ and, for each $p\in (0,1)$, fix a number $z(p)>0$ and  a probability measure $\mu^{(p)}$ on $\bbR^d$. Suppose that 
(i)  $\lim_{p\da 0} p\, z(p)^d=c$, 
(ii) $\mu^{(p)}\Rightarrow  \mu$ as $p \downarrow 0$.
Then, for any  stationary  ergodic   SPP  on $\bbR^d$ with law $\hat \bbP$ and finite  intensity $\rho$,  it holds 
$
\bigl( \hat \bbP_p \circ \Psi_{z(p)}  ^{-1}\bigr)_{\mu^{(p)}} \Longrightarrow {\rm PPP}[c \rho, \mu]$ as $p\downarrow 0$.
\end{Lemma}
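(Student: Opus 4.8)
The plan is to decouple the thinning-plus-rescaling step from the randomization step. First I would reduce to the \emph{unmarked} statement: it suffices to prove that $\hat\bbP_p\circ\Psi_{z(p)}^{-1}\Rightarrow{\rm PPP}[c\rho]$ as $p\downarrow0$. Indeed, the $p$--thinning of a locally finite configuration, rescaled by $\Psi_{z(p)}$, is still locally finite, so $\hat\bbP_p\circ\Psi_{z(p)}^{-1}$ is the law of an SPP; moreover ${\rm PPP}[c\rho]$ is stationary with finite intensity $c\rho$, hence (Remark \ref{condimento}) satisfies $\int d\,{\rm PPP}[c\rho](\hat\o)\,\hat\o(\partial A)=0$ for every box $A\subset\bbR^d$. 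Therefore, along any sequence $p_n\downarrow0$, Lemma \ref{weak_conv} applied with $\mu_n:=\mu^{(p_n)}\Rightarrow\mu$ and with limiting SPP law ${\rm PPP}[c\rho]$ gives $\bigl(\hat\bbP_{p_n}\circ\Psi_{z(p_n)}^{-1}\bigr)_{\mu_n}\Rightarrow({\rm PPP}[c\rho])_\mu={\rm PPP}[c\rho,\mu]$; since $p_n\downarrow0$ is arbitrary, this is the assertion. Note that $z(p)\to+\infty$ as $p\downarrow0$, since $p\,z(p)^d\to c>0$ together with $p\to0$ forces $z(p)^d\to+\infty$.

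For the unmarked statement I would use the Laplace-functional criterion for weak convergence of point processes (\cite[Thm.~16.16]{Kal}, exactly as in the proof of Lemma \ref{weak_conv}): since ${\rm PPP}[c\rho]$ is a.s.\ simple, it is enough to check that for every $f\in C_c^+(\bbR^d)$ the Laplace functional converges to $\exp\{-c\rho\int(1-e^{-f(y)})\,dy\}$. Conditioning on the configuration before thinning and using independence of the coin tosses, the Laplace functional of $\hat\bbP_p\circ\Psi_{z(p)}^{-1}$ at $f$ equals $\hat\bbE\bigl[\prod_{x\in\widehat\o}(1-p\,u_p(x))\bigr]$ with $u_p(x):=1-e^{-f(x/z(p))}\in[0,1)$. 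Taking logarithms, writing $\log(1-p\,u_p(x))=-p\,u_p(x)+O\bigl(p^2u_p(x)^2\bigr)$ uniformly (legitimate since $p\,u_p(x)<p$), and noting $\sum_x u_p(x)^2\le\sum_x u_p(x)$, the leading term is $p\sum_x u_p(x)=\bigl(p\,z(p)^d\bigr)\cdot z(p)^{-d}\sum_x u_p(x)$ and the remainder is $O\bigl(p\cdot p\sum_x u_p(x)\bigr)$. By the spatial ergodic theorem for the stationary ergodic SPP $\hat\xi$ applied to $g:=1-e^{-f}\in C_c^+(\bbR^d)$, one has $z(p)^{-d}\sum_x u_p(x)\to\rho\int(1-e^{-f})\,dy$ a.s.; hence $p\sum_x u_p(x)\to c\rho\int(1-e^{-f})\,dy$ a.s.\ and the remainder tends to $0$ a.s., so $\prod_{x\in\widehat\o}(1-p\,u_p(x))\to\exp\{-c\rho\int(1-e^{-f})\,dy\}$ a.s. These products lie in $[0,1]$, so by dominated convergence the expectations converge too, which is the desired Laplace-functional limit.

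The step I expect to require the most care is the ergodic-theorem input: I need the a.s.\ convergence $z^{-d}\sum_{x\in\widehat\o}g(x/z)\to\rho\int g\,dy$ as $z\to+\infty$ ranging over the reals (so that it specializes to the possibly non-monotone sequence $z=z(p_n)$), for the fixed continuous compactly supported $g=1-e^{-f}$. This follows from the multidimensional Birkhoff ergodic theorem — yielding $\hat\o(zB)/z^d\to\rho\,\ell(B)$ a.s.\ for boxes $B$, the type of ergodic argument already used in the paper (cf.~\cite[Prop.~4.2]{Fhom2}) — combined with a routine sandwiching of $g$ between step functions on a fine grid, plus a standard monotonicity argument to pass from integer $z$ to real $z$. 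The remaining ingredients (the conditioning identity for the Laplace functional of a thinned-and-rescaled process, the uniform logarithmic estimate, and the bounded-convergence interchange of limit and expectation) are all routine.
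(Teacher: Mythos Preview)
Your proof is correct and shares the same overall architecture as the paper's: both first reduce to the unmarked statement $\hat\bbP_p\circ\Psi_{z(p)}^{-1}\Rightarrow{\rm PPP}[c\rho]$ via Lemma~\ref{weak_conv} and Remark~\ref{condimento}, and both finish the unmarked step by invoking the ergodic theorem for stationary ergodic SPPs to get $z^{-d}\sum_{x\in\hat\xi}g(x/z)\to\rho\int g$ a.s.\ for $g\in C_c^+(\bbR^d)$. The difference lies only in how the unmarked step is packaged. The paper observes that a $p$--thinning with $p\to 0$ converges to a Cox process directed by the limit of $p\cdot\hat\xi^{(p)}$ (this is \cite[Thm.~16.19]{Kal}), so one only needs to show the random measure $p\,\Psi_{z(p)}(\hat\xi)$ converges weakly to the deterministic measure $c\rho\,dx$; by \cite[Thm.~16.16]{Kal} this amounts to $p\sum_{x\in\hat\xi}f(x/z(p))\to c\rho\int f$, which is exactly the ergodic limit times $p\,z(p)^d\to c$. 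You instead compute the Laplace functional of the thinned-rescaled process explicitly as $\hat\bbE\bigl[\prod_{x}(1-p\,u_p(x))\bigr]$, expand the logarithm, and control the second-order remainder by the first-order sum. Your route is more self-contained (it does not rely on the thinning/Cox convergence theorem) and makes the mechanism transparent, at the cost of the extra Taylor bookkeeping; the paper's route is shorter because it outsources that bookkeeping to Kallenberg's theorem. Both require the same care about the ergodic limit along real $z\to\infty$, which you flag correctly and the paper handles by citing \cite[Prop.~3.1]{Fhom1} and \cite{T}.
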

\begin{proof}[Proof of Lemma \ref{la_bomba}]
Since {\rm PPP}$[c \rho,\mu]$ is the $\mu$--randomization of {\rm PPP}$[c\rho]$\rosso{,} since 
$\mu^{(p)}\Rightarrow  \mu$ \rosso{and due to Remark \ref{condimento}, we can apply   Lemma \ref{weak_conv}. As a consequence,} it is enough to prove that  $ \hat \bbP_p \circ \Psi_{z(p)}  ^{-1}\Rightarrow {\rm PPP}[c \rho]$.
 Let us call $\hat \xi$ the SPP  with law $\hat\bbP$ (shortly, $\hat \xi \stackrel{\cL}{\sim} \hat \bbP$).
 Then, we define $\rosso{\hat \xi}^{(p)}$ as the SPP  $\rosso{\hat \xi}^{(p)}:= \Psi_{z(p) } (\hat \xi)$.  Note that $\rosso{\hat \xi}^{(p)} \stackrel{\cL}{\sim}  \hat \bbP\circ \Psi_{z(p)}  ^{-1}$. We finally define $\rosso{\hat \z}^{(p)}$ as the $p$--thinning of $\rosso{\hat \xi}^{(p)}$. \rosso{Then}
$\rosso{\hat  \z}^{(p)} \stackrel{\cL}{\sim}  (\hat \bbP\circ \Psi_{z(p)}  ^{-1})_p=\hat \bbP_p\circ \Psi_{z(p)}  ^{-1}$. \rosso{Hence, we need to prove that the SPP ${\hat\z}^{(p)}$  converges \rosso{weakly} to  {\rm PPP}$[c \rho]$.}
 As  {\rm PPP}$[c \rho]$ corresponds to a Cox process directed by $c  \rho dx$ (cf.~\cite[Chp.~12]{Kal}), by  \cite[Thm.~16.19]{Kal} it is enough to prove that the random measure  $p\rosso{\hat  \xi^{(p)} }$ (defined as $[p\rosso{\hat \xi}^{(p)} ](B):=p \cdot \rosso{\hat \xi}^{(p)}(B)$)  converges \rosso{weakly}  to the deterministic measure $c \rho dx$.  Due to \cite[Thm.~16.16]{Kal}  it is enough to prove that, \rot{for a fixed} continuous function $f:\bbR^d \to \bbR$ with compact support, the random variable \rosso{$p \sum _{x \in \rosso{\hat \xi}^{(p)} } f(x)$}  
  converges \rosso{weakly} to $c \rho \int f(x) dx$ as $p\downarrow 0$. Setting $\e=\e(p):= 1/z(p)$, we have 
 \[p \sum _{x \in \rosso{\hat \xi}^{(p)} } f(x) =
 p \sum_{x\in \hat \xi}  f( x/z(p) )=( p /\e(p)^{-d})  \e(p)^d   \sum_{x\in \hat \xi}  f(\e(p) x) 
\]
 Note that $\e (p) \da 0$ as $p\da 0$ since  $p z(p) ^d\to c>0$ as $p\da 0$. 
 As $\hat \xi$ is an ergodic stationary SPP,   $\e^d  \sum_{x\in \hat \xi} f(\e x)$ converges a.s. (and therefore weakly) to $\rho \int f(x) dx$  as $\e\da 0$ (see e.g.~\cite[Prop.~3.1]{Fhom1},  \cite{T}). Since  $p /\e(p)^{-d}=p z(p) ^d\to c$ as $p\downarrow 0$,  $p \sum _{x \in \rosso{\hat \xi}^{(p)} } f(x) $  converges  \rosso{weakly} to $c \rho \int f(x) dx$ as $p\downarrow 0$.
 \end{proof}



\section{Rescaling of  $\cG[\z,\b]$ and proof of Theorem \ref{teo1}}\label{escher}

The results presented in the first part of this section hold for all dimensions $d\geq 1$.  In Subsection  \ref{sec_proof_teo1} we will restrict to $d\geq 2$.
 
 \smallskip
 
In this section we investigate the effect of rescaling on the random graph $\cG[\z,\b]$ introduced in  Definition \ref{giggio_bello}. We recall that, given $\o\in \O$, 
$\cG[\z,\b](\o)$  has vertex set $\widehat{\o}$  and edges given by pairs $\{x,y\}$ with $x\not =y$ satisfying 
\eqref{connectcont}, i.e.~$|x-y|+\beta(|E_x|+|E_y|+|E_x-E_y)|)\leq \z$. Afterwards we will apply these results to derive  Theorem \ref{teo1}.

\smallskip

For the next \orc{lemma} recall Definition \ref{stella_stellina}  of  $\nu_{\star,\g}$, Definition   \ref{san_benedetto} of  $\nu(\g)$ and  $\nu_\g$ and  
recall Definition \ref{porto_dascoli} of  $\widehat{\o}_\g$ and $\o_\g$.

\begin{Lemma}
\label{fadeev_new}  Let  $\z,\b,\rho >0$  and let $\nu$ be a  probability measure   on $\bbR$.  Then the  following holds:
\begin{itemize}
\item[(i)] For any $\o\in \O$
the graph $\cG[\z, \b](\o)$  is  the  union of the graph
$\cG[\z,\b](\o_{\z/\b})$   and the  singletons $\{x\}$ with  $x\in \widehat \o  \setminus \widehat \o_{\z/\b}$, which are isolated points in $\cG[\z, \b](\o)$. 
\item[(ii)]   
Calling $\Psi_\z \cG[\z,\b] (\o_{\z/\b})   $  the image of $ \cG[\z,\b] (\o_{\z/\b}) $ under the map  $\Psi_\z:x\mapsto x/\z$, the graph $\Psi_\z \cG[\z,\b] (\o_{\z/\b}) $  equals the graph $\cG[1,1](\o_{\z,\b})$ where  
  $\o_{\z,\b}:=\left\{ \big (x/\z, \b E_x/\z\big) \,: \,  x\in \widehat{\o}_ {\z/\b} \right \}
  $
  as in \eqref{box_pokemon}.
 \end{itemize}
\end{Lemma}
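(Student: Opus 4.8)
The plan is to prove the two items by directly unwinding the definition of the graph $\cG[\z,\b]$ (Definition \ref{giggio_bello}): namely, $\{x,y\}$ with $x\neq y$ is an edge of $\cG[\z,\b](\o)$ if and only if $|x-y|+\b(|E_x|+|E_y|+|E_x-E_y|)\leq \z$. The key preliminary observation — to be stated first — is that for any edge $\{x,y\}$ of $\cG[\z,\b](\o)$ one necessarily has $\b(|E_x|+|E_y|+|E_x-E_y|)\leq \z$, and in particular $2\b|E_x|\leq \z$ and $2\b|E_y|\leq\z$ (using $|E_x|+|E_y|+|E_x-E_y|\geq 2\max\{|E_x|,|E_y|\}$, which is the content of \eqref{balzetto}). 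Hence $|E_x|\leq \z/(2\b)\leq \z/\b$ and likewise for $y$, so both endpoints of any edge lie in $\widehat\o_{\z/\b}$.

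For Item (i): Given this observation, no edge of $\cG[\z,\b](\o)$ can touch a vertex $x\in\widehat\o\setminus\widehat\o_{\z/\b}$, so each such $x$ is an isolated point. The vertex set $\widehat\o$ partitions as $\widehat\o_{\z/\b}\sqcup(\widehat\o\setminus\widehat\o_{\z/\b})$, and since energy marks are unchanged by the restriction $\o\mapsto\o_{\z/\b}$, the edge set of $\cG[\z,\b](\o)$ restricted to $\widehat\o_{\z/\b}$ is exactly the edge set of $\cG[\z,\b](\o_{\z/\b})$ (the edge condition \eqref{connectcont} for a pair $\{x,y\}\subset\widehat\o_{\z/\b}$ reads identically whether computed in $\o$ or in $\o_{\z/\b}$). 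This gives the claimed decomposition as a disjoint union of $\cG[\z,\b](\o_{\z/\b})$ and the isolated singletons.

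For Item (ii): This is a scaling computation. Write $\o_{\z/\b}=\{(x,E_x):x\in\widehat\o_{\z/\b}\}$ and $\o_{\z,\b}=\{(x/\z,\b E_x/\z):x\in\widehat\o_{\z/\b}\}$. The map $\Psi_\z$ is a bijection from $\widehat\o_{\z/\b}$ onto $\widehat{\o_{\z,\b}}$, sending $x\mapsto x/\z$. It remains to check that $\Psi_\z$ carries edges to edges. For $x\neq y$ in $\widehat\o_{\z/\b}$, the pair $\{x,y\}$ is an edge of $\cG[\z,\b](\o_{\z/\b})$ iff $|x-y|+\b(|E_x|+|E_y|+|E_x-E_y|)\leq\z$; dividing by $\z$ and writing $x'=x/\z$, $y'=y/\z$, $E'_{x'}=\b E_x/\z$, $E'_{y'}=\b E_y/\z$, this is equivalent to $|x'-y'|+(|E'_{x'}|+|E'_{y'}|+|E'_{x'}-E'_{y'}|)\leq 1$, which is precisely the condition \eqref{connectcont} with $\z=1$, $\b=1$ for the pair $\{x',y'\}$ in $\o_{\z,\b}$. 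Hence $\Psi_\z\cG[\z,\b](\o_{\z/\b})=\cG[1,1](\o_{\z,\b})$.

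I do not expect a genuine obstacle here; the only point requiring minor care is the preliminary observation that edges force both marks into $[-\z/\b,\z/\b]$ (indeed into the smaller interval $[-\z/(2\b),\z/(2\b)]$), which is where \eqref{balzetto} is used, and the bookkeeping that passing to $\o_{\z/\b}$ changes neither the surviving vertices' marks nor the edge relation among them.
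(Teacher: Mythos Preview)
Your proof is correct and follows essentially the same approach as the paper's own proof: for (i) you observe that the edge condition forces both endpoints to have marks in $[-\z/\b,\z/\b]$ (the paper states this in one line without invoking \eqref{balzetto}), and for (ii) you divide \eqref{connectcont} by $\z$ to obtain the edge condition for $\cG[1,1]$ in the rescaled variables. The only difference is that you are more explicit about the bookkeeping and derive the slightly sharper bound $|E_x|\leq \z/(2\b)$, which is not needed.
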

\begin{proof}
We start with Item (i).
Due to \eqref{connectcont}, if $ |E_x|>\z/\b$, then $x$ is an isolated point in the graph $\cG[\z,\b](\o) $. The conclusion is then immediate. Item
(ii) follows from the observation that \eqref{connectcont} is equivalent to 
$
\big|x/\z -y/\z\big |+\left(\big|\b E_x/\z \big|+\big|\b E_y/\z\big|+\big| \b E_x/\z-\b E_y/\z\big|\right)\leq 1.$
\end{proof}

An immediate consequence of   \orc{Lemmas  \ref{fadeev_250} and \ref{fadeev_new}} is the following fact:
\begin{Corollary}\label{funghetto} We have
\[
{\rm PPP}[\rho,
\nu]\big(\, \cG[\z, \b] \text{ percolates}\,\big)
= {\rm PPP}[\rho \nu(\z/\b)\z^d   ,\nu_{\star,  \z/\b}]
 \big( \, \cG[1,1]\text{ percolates}\, \big)\,.
 \]
\end{Corollary}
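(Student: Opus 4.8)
The plan is to deduce Corollary \ref{funghetto} directly from the two lemmas it cites, essentially by tracking the percolation property through the graph isomorphisms and measure identifications they provide. The statement to prove is
\[
{\rm PPP}[\rho,\nu]\bigl(\,\cG[\z,\b]\text{ percolates}\,\bigr)
= {\rm PPP}[\rho\,\nu(\z/\b)\,\z^d,\,\nu_{\star,\z/\b}]\bigl(\,\cG[1,1]\text{ percolates}\,\bigr)\,,
\]
and the point is that the left-hand side only depends on $\o$ through those points of $\widehat\o$ with $|E_x|\le\z/\b$, rescaled.

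First I would sample $\o$ according to ${\rm PPP}[\rho,\nu]$. By Lemma \ref{fadeev_new}(i), the graph $\cG[\z,\b](\o)$ differs from $\cG[\z,\b](\o_{\z/\b})$ only by the addition of isolated vertices (the points $x\in\widehat\o\setminus\widehat\o_{\z/\b}$). Adding isolated points does not change whether a graph has an unbounded connected component, so $\cG[\z,\b](\o)$ percolates if and only if $\cG[\z,\b](\o_{\z/\b})$ percolates. Next, by Lemma \ref{fadeev_new}(ii), the dilation $\Psi_\z:x\mapsto x/\z$ carries $\cG[\z,\b](\o_{\z/\b})$ onto $\cG[1,1](\o_{\z,\b})$, where $\o_{\z,\b}$ is as in \eqref{box_pokemon}; since $\Psi_\z$ is a bijection of $\bbR^d$, it maps unbounded components to unbounded components, so $\cG[\z,\b](\o_{\z/\b})$ percolates if and only if $\cG[1,1](\o_{\z,\b})$ percolates. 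Combining, the event $\{\cG[\z,\b]\text{ percolates}\}$ for $\o$ equals, up to the above equivalences, the event $\{\cG[1,1]\text{ percolates}\}$ read on $\o_{\z,\b}$.

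It then remains to identify the law of $\o_{\z,\b}$ when $\o$ has law ${\rm PPP}[\rho,\nu]$. This is exactly the content of Lemma \ref{fadeev_250} applied with $\hat\bbP={\rm PPP}[\rho]$ and $\hat\bbP_\nu={\rm PPP}[\rho,\nu]$: with $p:=\nu(\z/\b)$, the marked SPP $\o_{\z,\b}$ has law $(\hat\bbP_p\circ\Psi_\z^{-1})_{\nu_{\star,\z/\b}}={\rm PPP}[\rho\,\nu(\z/\b)\,\z^d,\,\nu_{\star,\z/\b}]$. Plugging this into the previous paragraph yields the claimed equality of probabilities. Strictly speaking one should note that all the sets involved are measurable (the percolation event lies in $\cB(\hat\O)$, hence in $\cB(\O)$ via the spatial projection, and the map $\o\mapsto\o_{\z,\b}$ is measurable, as already used in Lemma \ref{fadeev_250}), but this is routine.

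There is no real obstacle here: the corollary is a bookkeeping consequence of Lemmas \ref{fadeev_250} and \ref{fadeev_new}. The only point requiring a (trivial) observation — and the closest thing to a ``step'' — is that neither adjoining isolated vertices nor applying the homothety $\Psi_\z$ affects the existence of an unbounded connected component; everything else is substitution of the already-established graph isomorphism and law identification.
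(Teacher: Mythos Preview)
Your proof is correct and follows exactly the route the paper intends: the corollary is stated there as ``an immediate consequence of Lemmas \ref{fadeev_250} and \ref{fadeev_new}'', and your argument unpacks precisely that, using (i) to discard isolated points, (ii) for the graph isomorphism under $\Psi_\z$, and Lemma \ref{fadeev_250} to identify the law of $\o_{\z,\b}$.
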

For $\nu$ given by  \eqref{speciale2} or \eqref{speciale1} or in general for $\nu\in \cvp \cup  \cv$,  the above r.h.s. can be made more explicit due to \magenta{Lemma~\ref{fissato} below. We do not prove it since it} is an immediate consequence of Items (i) and (ii) in Proposition \ref{paradiso_PPP} applied with $\g= \z/\b$. \verde{Proposition \ref{paradiso_PPP} is proved in the next section}. 

\begin{Lemma}\label{fissato} \rosso{Let  $\a\geq 0$, $\nu \in \cvp\cup\cv$ and  let $\z,\b>0$ be  such that $\z\leq \min \{\rot{C_\nu, \e_\nu}\}\b $. Then $\nu_{\star,\z/\b} $ equals $\nu^+_{1,\a}$ when $\nu\in \cvp$ and equals $\nu_{1,\a}$ when $\nu\in \cv$. Moreover, $  \rho \nu(\z/\b)\z^d    = \rho  \left(  \b \rot{C_\nu}  \right) ^{-(\a+1)} \z^{\a+1+d}$.}
\end{Lemma}


\subsection{Proof of Theorem \ref{teo1}}\label{sec_proof_teo1} We take $d\geq 2$. 
We prove the theorem  for $\nu\in \cvp$. The   case $\nu\in \cv$  is similar.
To simplify the notation we call $\eta$ the r.h.s. of \eqref{zetacritico}.
We need to show  for  $\eta<  \min\{\rot{C_\nu , \e_\nu}\}  \b $  that 
\begin{itemize}
\item[(1)]  if $\z<\eta$ then ${\rm PPP}[\rho,\nu] \left(\, \cG[\z, \b] \text{ percolates}\, \right)=0$, 
\item[(2)] if $\z>\eta$ then ${\rm PPP}[\rho,\nu]\left(\, \cG[\z, \b]\text{ percolates}\, \right)=1$.  
\end{itemize}
In Item (1)  we have \orc{$\z <  \min\{\rot{C_\nu , \e_\nu}\} \beta$} as $\eta <   \min\{\rot{C_\nu , \e_\nu}\} \beta$ by assumption. 
In Item (2) we can   restrict to $\z\in \left(\eta, \min\{\rot{C_\nu , \e_\nu}\} \beta\right]$ as this  set  is nonempty and the function  $\z\to  {\rm PPP}[\rho,\nu]\bigl( \cG[\z, \b] \text{ percolates}\, \bigr)$ does not decrease. Hence  in both Items (1) and (2)  we can restrict  $\z \leq \min\{\rot{C_\nu , \e_\nu}\} \beta $. Then, as a byproduct of   Corollary \ref{funghetto} and Lemma \ref{fissato}, we have 
  \begin{multline*}
  {\rm PPP} [\rho,\nu] \bigl( \cG[\z, \b] \text{ percolates}\, \bigr)
  = 
 {\rm  PPP}[\rho \nu(\z/\b) \z^d ,\nu_{\star,  \z/\b}] \bigl(  \cG[1, 1]  \text{ percolates}\,\bigr )\\
  ={\rm PPP}[ \rho  \left(  \b \rot{C_\nu}  \right) ^{-(\a+1)} \z^{\a+1+d}, \nu^+_{1,\a}]\bigl(  \cG[1, 1]  \text{ percolates}\,\bigr )\,.
 \end{multline*}
 We now observe that $\z\lessgtr \eta$ if and only if 
\[  \rho  \left(  \b \rot{C_\nu}  \right) ^{-(\a+1)} \z^{\a+1+d} 
\lessgtr  \rho  \left(  \b \rot{C_\nu}  \right) ^{-(\a+1)} \eta^{\a+1+d}=\l^+_c(\a)
\]
(the last identity follows immediately from \eqref{zetacritico}). 
  As $\l^+_c(\a)$ is the critical density for the graph  $\cG[1, 1](\o)$ with $\o$ sampled according to  ${\rm PPP}[\l,\nu^+_{1,\a}] $, we can conclude.

\section{Proof of Theorem \ref{paradiso} and Proposition \ref{paradiso_PPP}}\label{Tarvisio}
In this section we prove Theorem \ref{paradiso} and Proposition \ref{paradiso_PPP}.
\subsection{Proof of Theorem  \ref{paradiso}}\label{sec_dim_paradiso} We set
\be \label{meta}
p:=\nu( \ell(\b)/\b)\,, \qquad \g := \ell(\b)/\b\,, \qquad z = \ell (\b)\,.
\en
By Lemma \ref{fadeev_250}, \rot{the marked SPP defined in}
  \eqref{gandhi} has law $(\hat \bbP_p \circ \Psi_z^{-1} ) _{\nu_{\star,\g}}$.

We now want to apply Lemma \ref{la_bomba}. 
Firstly we   note that, by \eqref{vichinghi100} and \eqref{canditi105},   for $\b \to +\infty$ \rot{it holds}
\[ 
p z^d= \nu\bigl( \ell(\b)/\b\bigr) \ell(\b)^d
\rot{=(1+o(1)) }C_*  \bigl(\ell(\b)/\b) ^{\a+1} \ell(\b)^d 
\rot{=(1+o(1))}\l/ \rho \,.
\]
As a consequence $\lim_{\b \to +\infty }p z^d = \l/\rho$.
  Secondly  we observe that 
  \[
  \g:=\ell(\b)/\b= \bigl(\l /(C_* \rho)\bigr)^{\frac{1}{\a+1+d}} \b ^{-\frac{d}{\a+1+d}}
  \] goes to zero when $\b \to+\infty$. \rosso{Then,} by \eqref{vichinghi100},  
  $p:=\nu( \ell(\b)/\b)=\nu(\g)$  goes to zero as  $\b \to +\infty$. \rosso{Moreover,} \rosso{by Item (i) of Theorem \ref{paradiso}}, $\nu_{\star,\g}$ converges to $\nu^+_{1,\a}$ or $\nu_{1,\a}$ as $\b \to +\infty$. \rosso{Due to the above observations}, the weak convergence of \eqref{gandhi} to \eqref{ghibaudo}  follows from Lemma \ref{la_bomba}.

\subsection{Proof of  Proposition \ref{paradiso_PPP}}\label{Trento}
%

We prove Items (i) and (ii) only for $\nu \in \cvp$ as the case $\nu \in \cv$ is similar.
 We first prove our claim for $\nu=\nu_{\rot{C},\a}^+$. In this case  $\rot{\e_\nu=C_\nu=C}$, hence the upper bound  $\g \leq \rot{\min\{C_\nu, \e_\nu\}}$ reads $\g\leq \rot{C}$.
 Let $X$ be a random variable with law $\nu$. For $0 \leq t \leq \rot{C}$ we have $\nu(t)=\rosso{\nu([-t,t])=}(t/\rot{C})^{\a+1}$. Then, 
given $0\leq u \leq 1$ and  using that  $\g \leq \rot{C}$, we get 
 \begin{equation*}
 \nu_{\star,\g} ([0,u])= P\bigl(  X/\g\in [0,u]\,\big{|}\, X/\g \leq 1\bigr) =  \nu ( u\g)/\nu(\g)   =  u^{\a+1}=\nu^+_{1,\a} (  [0,u])\,.
  \end{equation*}
  This proves  that $  \nu_{\star,\g} =\nu^+_{1,\a}$.  Trivially, $\nu([-\g, \g])= \nu(\g)=(\g/\rot{C})^{\a+1}$.   Items (i) and (ii) are  then proved for $\nu=  \nu_{\rot{C},\a}^+$.
 We now move to a generic $\nu\in \cvp$. The value $\nu(\g)$ and the probability  $\nu_{\star,  \g}$ depend only on the measure $\nu$ restricted to $[-\g,\g]$  which equals the measure
 $\nu_{\rot{C_\nu},\a}^+$ restricted to $[-\g,\g]$ (as $\g\leq\rot{ \e_\nu}$). Hence  
    we have  $\nu(\g)=\nu'(\g)$  and  $\nu_{\star,  \g}=\nu'_{\star,  \g}$ where $\nu':=\nu_{\rot{C_\nu},\a}^+$.  The conclusion then follows from the special case 
 $\nu=\nu_{\rot{C},\a}^+$ treated above since $\g\leq \rot{C_\nu}$.

\smallskip
We move to the proof of  the final statement.
If one starts with $\o$ sampled by  ${\rm PPP}[\rho, \nu]$, then by Lemma \ref{fadeev_250}
%
%
%
 the law of \eqref{gandhi} is $
{\rm PPP}\left[\,\rho \nu( \g ) \ell(\b)^d  , \nu_{*,\g} \,\right]$, \magenta{where $\g:=\ell(\b)/\b$}.
As soon as  $\b$ is  large enough to assure that  $\g\leq \min\{\rot{C_\nu,\e_\nu}\}$,  by Item (ii) we have 
\[
\rho \,\nu( \g ) \ell(\b)^d = \frac{\rho \g^{\a+1}}{\rot{C_\nu^{\a+1}}} \ell(\b)^d= \frac{\rho}{\rot{C_\nu^{\a+1}}} \ell(\b)^{\a+1+d} \b^{-\a-1}= 
\l\,.\]
As a consequence, $
{\rm PPP}\left[\,\rho \nu( \g ) \ell(\b)^d  , \nu_{*,\g} \,\right]=
{\rm PPP}\left[\,\l , \nu_{*,\g} \,\right]$. Item (i) allows now to conclude.

\section{\verde{Proof of Theorems~\ref{parataUB} and~\ref{supersanto} (upper bounds in Mott's law)}}\label{sec_UB}

\verde{In this section we prove Theorem~\ref{supersanto}. By applying it, in Section \ref{sec_proof_parataUB} we prove Theorem~\ref{parataUB}}. 

\verde{To simplify the notation we take $v:=e_1=(1,0,\dots,0)$ but the arguments below are completely general as can be easily checked. Then to prove Theorem~\ref{supersanto} we}  need to upper bound 
 \begin{equation}\label{poh}
 D(\b)_{1,1} =\inf _{ f\in L^\infty(\bbP_0) } \frac{1}{2}\bbE_0 \Big[
 \sum_{x\in \widehat \o}  c_{0,x}(\o,\b) \left
 (x_1 - \nabla_x f (\o) 
\right)^2\Big]\,,
 \end{equation}
 where $\nabla_x f (\o) := f(\t_x \o) - f(\o)$.  \orc{Above, $\bbE_0$ denotes the expectation w.r.t. $\vpz$}.
 
 Due to the above variational characterization \eqref{poh} of $D(\b)_{1,1}$, for each $f\in L^\infty (\vpz)$ one has an upper bound of $D(\b)_{1,1}$. Our construction of the right functions $f$, giving efficient upper bounds, is inspired by the one in \cite{PR} (see Item (3) in
  \cite[Proof~of~Thm.~3.12]{PR}).
 
\medskip

Given a subset 
$A\subseteq\bbR^d$, we write  $\text{diam}\,A$ for the diameter of $A$ w.r.t. the uniform norm, i.e. $\text{diam}\,A:=\sup\{|x_i-y_i|\,:\,x,y\in A,\,i=1,\ldots,d\}$.

\begin{Definition}
 Given $x\in\bbR^d$, we denote by $C[\z,\b](x,\o)$   the connected component  containing  $x$ in the graph $\cG[\z,\b](\o)$.  If $x=0$, we simply  write  $C[\z,\b](\o)$.
 \end{Definition}

Given $n \in \bbN$ and given the parameters $\z,\b>0$, we consider the test function  $f_n=f_{n,\z,\b}\in L^\infty(\O_0)$ defined as \begin{equation}
\label{ftest}
f_n (\o):=
\begin{cases}
-\min \left\{ x_1:  x\in C[\z,\b](\o)\right\}  &\text{if } {\rm diam}\,C[\z,\b](\o) \leq n \,,
\\0 &\text{otherwise}\,.
\end{cases}\end{equation}
By \eqref{poh}, we have
\be\label{st1} 
2 D(\b)_{1,1} \leq \bbE_0 \Big[ \sum_{x\in \widehat \o}  c_{0,x}(\o,\b) \left
 (x_1 - \nabla_x f_n  (\o)\right)^2\Big]
 \leq A_1+A_2+A_3 \,,
\en
where
\begin{align}
& A_1:= \bbE_0 \Big[ \sum_{x\in \widehat \o}  c_{0,x}(\o,\b) \left
  (x_1 - \nabla _x f_n  (\o)\right)^2  \mathds{1} _{H_x}(\o)
  \mathds{1}_{
  \{ {\rm diam}\, C[\z,\b] \leq n \}} (\o)
  \Big]\,,\\
  & A_2:= \bbE_0 \Big[ \sum_{x\in \widehat \o}  c_{0,x}(\o,\b) \left
  (x_1 - \nabla _x f_n  (\o)\right)^2  \mathds{1} _{H_x}(\o)
  \mathds{1}_{
  \{ {\rm diam}\, C[\z,\b] > n \}}(\o)
 \Big]\,,\\
& A_3= \bbE_0 \Big[ \sum_{x\in \widehat \o}  c_{0,x}(\o,\b) \left
 (x_1 - \nabla_x f_n  (\o)\right)^2  \mathds{1} _{H^c_x}(\o) \Big]\,,
\end{align}
and $H_x:=\{\o \in \O_0\,:\,  x\in C[\z,\b](\o)\}$.

The terms $A_1$, $A_2$ and $A_3$ are similar to the terms $I_N^{(1)}(\b)$, $I_N^{(2)}(\b)$ and $I_N^{(3)}(\b)$ in the arxiv version of  \cite[Section~3]{FM}, respectively
\footnote{We point out an error in  the proof of \cite[Prop.~1]{FM}.  \cite[Eq.~(3.15)]{FM}   is valid only  by inserting in $I^{(1)}_N(\b)$ the characteristic function of the event $\{|C_0^\b(\xi)|\leq N\}$. One has then to treat another contribution given by $I^{(1)}_N(\b)$ with inside  the     characteristic function of the event $\{|C_0^\b(\xi)|> N\}$. This contribution is treated in the arxiv version of  \cite[Prop.~1]{FM}, and also in our proof of Lemma \ref{santo} (in \cite{FM} one  has  to use that $\nabla_x f_N^\b(\xi)=0$ in this case).}. By arguments similar to the ones  used there, one gets the following result:   
\begin{Lemma}\label{santo}  Fix  $\d\in (0,1)$ and a pair of conjugate exponents $p,q\in (1,+\infty)$.  Let $\bbP$ be a stationary marked SPP \magenta{with finite and positive intensity $\rho$}
satisfying \eqref{non_periodico} and  \magenta{$\bbE\bigl[ \widehat{\o}([0,1]^d) ^{ q+1}\bigr]<+\infty$}.
Then, for a suitable constant  $c=c( \d,p)>0$, it holds 
\be\label{francesco} D(\b)_{1,1} \leq c n^{-1/p}\bbE_0\big[ {\rm diam}\, C[\z,\b]\big]^{1/p} +
c
e^{-(1-\d)\z} (1+ \bbE_0 \bigl[ ( {\rm diam}\, C[\z,\b])^{2p}  \bigr] ^{1/p} \magenta{)}\,. \
\en
\end{Lemma}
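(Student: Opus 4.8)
The plan is to bound the three terms $A_1$, $A_2$, $A_3$ from \eqref{st1} separately, using that $c_{0,x}(\o,\b)\leq e^{-|x|}$ always (since the energy contribution is nonnegative) and that the energy-cost term is bounded by $e^{-\z}$ on edges of $\cG[\z,\b]$.

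\textbf{Term $A_3$.} On the event $H_x^c$, the point $x$ lies outside $C[\z,\b](\o)$, so in particular $\{0,x\}$ is not an edge of $\cG[\z,\b](\o)$, which means $|x|+\b(|E_0|+|E_x|+|E_0-E_x|)>\z$, hence $c_{0,x}(\o,\b)<e^{-\z}$. Also $\nabla_x f_n(\o)=f_n(\t_x\o)-f_n(\o)$; since $x\notin C[\z,\b](\o)$ the two clusters $C[\z,\b](\o)$ and $C[\z,\b](x,\o)=C[\z,\b](0,\t_x\o)$ are distinct, and one checks that on $H_x^c$ one has $\nabla_x f_n(\o)=0$ unless exactly one of the two diameters exceeds $n$; in any case $|x_1-\nabla_x f_n(\o)|\leq |x_1|+\min\{x_1 : x'\in C[\z,\b](0,\t_x\o)\}+\dots$ can be crudely bounded by a polynomial in the relevant cluster diameters plus $|x|$. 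The decisive point is the extra factor $e^{-\z}\cdot e^{-|x|/2}$ available from $c_{0,x}$, so that summing over $x\in\widehat\o$ and taking $\bbE_0$ gives a bound of the form $c\,e^{-(1-\d)\z}$ times a finite moment of cluster diameters, after using H\"older with exponents $p,q$ to separate the $e^{-|x|/2}$-weighted sum (whose moments are controlled by $\bbE[\widehat\o([0,1]^d)^{\lceil q\rceil+1}]<\infty$ via a standard greedy/Poisson-type estimate) from the $({\rm diam}\,C)^{2p}$ factor. This produces the second summand of \eqref{francesco}.

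\textbf{Terms $A_1$ and $A_2$.} On $H_x$, $x$ is in the same cluster as $0$; if moreover ${\rm diam}\,C[\z,\b](\o)\leq n$ then $f_n(\o)=-\min\{x'_1:x'\in C[\z,\b](\o)\}$ and, crucially, $\t_x\o$ has the same cluster of the origin translated by $x$, so $\min\{x'_1:x'\in C[\z,\b](\t_x\o)\}=\min\{x'_1:x'\in C[\z,\b](\o)\}-x_1$, giving $\nabla_x f_n(\o)=x_1$ exactly, hence $x_1-\nabla_x f_n(\o)=0$. Therefore $A_1=0$. For $A_2$, we are on ${\rm diam}\,C[\z,\b](\o)>n$ and $x\in C[\z,\b](\o)$, so $|x_1|\leq {\rm diam}\,C[\z,\b](\o)$ and $|\nabla_x f_n(\o)|$ is likewise bounded by cluster diameters; moreover the whole event forces $C[\z,\b](\o)$ to have diameter $>n$. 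Bounding $c_{0,x}(\o,\b)\leq e^{-|x|/2}e^{-|x|/2}$ and using $\sum_{x\in\widehat\o}e^{-|x|/2}(\cdot)$ with H\"older, together with the indicator $\mathds{1}({\rm diam}\,C[\z,\b]>n)\leq n^{-1/p}({\rm diam}\,C[\z,\b])^{1/p}$ (Chebyshev in disguise) applied to pull out one factor of the diameter, yields $A_2\leq c\,n^{-1/p}\,\bbE_0[{\rm diam}\,C[\z,\b]]^{1/p}$ up to combining with the finite weighted moments of $\widehat\o$; this is the first summand of \eqref{francesco}.

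\textbf{Main obstacle.} The delicate part is the bookkeeping in $A_2$ and $A_3$: verifying that $|x_1-\nabla_x f_n(\o)|$ is indeed dominated by (a power of) the diameters of the at most two clusters involved, uniformly in $n$, and then arranging the H\"older split so that the $e^{-|x|}$-decay from $c_{0,x}$ absorbs the sum over $x\in\widehat\o$ while leaving a clean finite moment $\bbE_0[({\rm diam}\,C[\z,\b])^{2p}]^{1/p}$. This is exactly the point where the correction to the proof of \cite[Prop.~1]{FM} flagged in the footnote enters: one must keep the indicator $\mathds{1}({\rm diam}\,C[\z,\b]\leq n)$ inside $A_1$ (making it vanish) and handle the complementary regime through $A_2$. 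Once these combinatorial-geometric bounds are in place, the moment bounds on $\widehat\o([0,1]^d)$ guarantee all the $\bbE_0$-expectations appearing are finite, and collecting the three estimates gives \eqref{francesco} with a constant depending only on $\d$ and $p$.
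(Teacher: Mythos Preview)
Your decomposition into $A_1,A_2,A_3$ and the treatment of $A_1$ are correct and match the paper. However, the arguments for $A_2$ and $A_3$ contain genuine gaps.

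\textbf{For $A_2$:} You miss the key simplification. On $H_x$ we have $C[\z,\b](\o)=C[\z,\b](x,\o)=C[\z,\b](\t_x\o)+x$, so ${\rm diam}\,C[\z,\b](\t_x\o)={\rm diam}\,C[\z,\b](\o)>n$. Hence $f_n(\o)=f_n(\t_x\o)=0$ and $x_1-\nabla_x f_n(\o)=x_1$ \emph{exactly}. The paper then applies H\"older to get $A_2\leq \bbE_0\bigl[(\sum_{x\in\widehat\o}e^{-|x|}x_1^2)^q\bigr]^{1/q}\,\vpz({\rm diam}\,C[\z,\b]>n)^{1/p}$, and Markov gives the first term of \eqref{francesco}. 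Your route---bounding $|x_1|$ and $|\nabla_x f_n|$ each by ${\rm diam}\,C$---would produce extra diameter factors that do \emph{not} collapse to $n^{-1/p}\bbE_0[{\rm diam}\,C]^{1/p}$.

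\textbf{For $A_3$:} Two problems. First, the bound ``$e^{-\z}\cdot e^{-|x|/2}$'' is not available; from $c_{0,x}<e^{-\z}$ and $c_{0,x}\leq e^{-|x|}$ one only gets the interpolation $c_{0,x}\leq e^{-(1-\d)\z}e^{-\d|x|}$, and this is precisely why $\d$ appears in the statement. Second, and more seriously, you never use hypothesis \eqref{non_periodico}. The paper expands $(x_1-\nabla_x f_n)^2\leq 3x_1^2+3f_n(\o)^2+3f_n(\t_x\o)^2$, bounds $|f_n|\leq{\rm diam}\,C$, and then needs to control $C_2:=e^{-(1-\d)\z}\bbE_0\bigl[\sum_{x\in\widehat\o}e^{-\d|x|}f_n(\t_x\o)^2\bigr]$. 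This is where the non-periodicity assumption (equivalently \eqref{non_periodico_bis}) enters, via \cite[Lemma~1--(i)]{FM}, to conclude $C_2=C_3:=e^{-(1-\d)\z}\bbE_0\bigl[\sum_{x\in\widehat\o}e^{-\d|x|}f_n(\o)^2\bigr]$; then H\"older and $|f_n|\leq{\rm diam}\,C$ give the second term of \eqref{francesco}. Your claim that ``$\nabla_x f_n(\o)=0$ unless exactly one of the two diameters exceeds $n$'' is also incorrect: on $H_x^c$ the two clusters are unrelated, and even if both have diameter $\leq n$ their leftmost-coordinate minima need not agree.
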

We sketch the proof (for completeness and due an  error in the proof of \cite[Prop.~1]{FM} already commented).
\begin{proof}
 When   $H_x $ occurs, we have    
$
C[\z,\b](\o)=C[\z,\b](x,\o)= C[\z, \b](\t_x \o) +x$
and therefore $
{\rm diam}\, C[\z,\b](\t_x\o) =  {\rm diam}\, C[\z,\b](\o) $. Hence, if in addition   ${\rm diam}\, C[\z,\b](\o) \leq n$, we get $x_1-\nabla_x f_{n}(\o) =0$, thus implying that $A_1=0$. On the other hand, if $H_x$ occurs and ${\rm diam}\, C[\z,\b](\o) > n$,  then we  have $f_n(\o)=f_n(\t_x\o)=0$, hence  $x_1-\nabla_x f_{n}(\o) =x_1$
and by H\"older's inequality 
 $A_2 \leq  \bbE_0 \big[ \bigl( \sum_{x\in \widehat \o}  e^{-|x|} x_1^2 \bigr)^q \big]^{1/q} 
 \vpz\bigl( {\rm diam}\, C[\z,\b] > n  \bigr)^{1/p}$ (the last \magenta{probability} can be further bounded by Markov inequality). \magenta{At the end we will bound  $\bbE_0 \big[ \bigl( \sum_{x\in \widehat \o}  e^{-|x|} x_1^2 \bigr)^q \big]$.}

 When $H_x^c$ occurs and $x\in \widehat{\o}\setminus\{0\}$, it must be   $c_{0,x}(\o,\b) < e^{-\z}$ (otherwise $\{0,x\}$ would be an edge of $\cG[\z,\b](\o)$). Since we always have $c_{0,x}(\o,\b) \leq e^{-|x|}$, we conclude that $c_{0,x}(\o,\b) \leq e^{-(1-\d)\z}e^{-\d|x|}$.
 As a consequence   we can bound
$
A_3 \leq  3 C_1+3C_2+3C_3$, where 
$C_1:=e^{-(1-\d)\z}\bbE_0 \big[ \sum_{x\in \widehat \o} e^{-\d|x|}  |x_1|^2\big]$, $C_2:=e^{-(1-\d)\z} \bbE_0 \big[ \sum_{x\in \widehat \o}  e^{-\d|x|} f_n(\t_x\o) ^2  \big]$ and 
$ C_3:=e^{-(1-\d)\z} \bbE_0 \big[ \sum_{x\in \widehat \o}  e^{-\d|x|} f_n(\o) ^2  \big]$. By \eqref{non_periodico} (and its equivalent version \eqref{non_periodico_bis}) and \cite[Lemma~1]{FM}--(i), $C_2=C_3$. To bound  $C_3$ we use H\"older's inequality and  that $|f_n|\leq  {\rm diam}\, C[\z,\b]$. We then   get
$C_3\leq e^{-(1-\d)\z}
\bbE_0 \big[ \bigl( \sum_{x\in \widehat \o}  e^{-\d|x|}\bigr) ^q  \big]^{1/q}
   \bbE_0 \bigl[ ( {\rm diam}\, C[\z,\b])^{2 p}  \bigr] ^{1/p}
   $.
   
\magenta{To conclude we need to bound by some $(\d,p)$--dependent constant  the expectations  $\bbE_0 \big[ \bigl( \sum_{x\in \widehat \o}  e^{-|x|} x_1^2 \bigr)^q \big] $,
      $\bbE_0 \big[  \sum_{x\in \widehat \o}  e^{-|x|} x_1^2  \big] $
     and $\bbE_0 \big[ \bigl( \sum_{x\in \widehat \o}  e^{-\d|x|}\bigr) ^q  \big]$  appearing above.
     Let us set $\D_k:=k+ [0,1]^d$. Trivially it is enough to bound $\bbE_0 \big[ \bigl( \sum_{k\in \bbZ^d}  e^{-\frac{\d}{2}|k|}\widehat{\o}(\D_k)  \bigr) ^q  \big]$, which by H\"older's inequality is bounded by
     \be\label{tempo25}\Big( \sum_{k\in \bbZ^d} e^{-\frac{\d}{4} |k|p}\Big)^{q/p}
     \bbE_0 \Big[ \sum_{k\in \bbZ^d}  e^{-\frac{\d}{4}|k|q}\widehat{\o}(\D_k) ^q   \Big]\,.
     \en
   By Campbell's identity \eqref{campanello} with $f(x,\o):=\mathds{1}(x\in [0,1]^d) \widehat{\o}(\D_k) ^q $, the expectation in \eqref{tempo25} is bounded by 
$\rho^{-1}\sum_{k\in\bbZ^d} e^{-\frac{\d}{4}|k|q} \bbE\Big[ \widehat{\o}(\D_0) \widehat{\o}( \D'_k)^q\Big]$ where $\D'_k:=k+[-1,2]^d$ ($\bbE[\cdot]$ being the expectation w.r.t.~$\bbP$). We can bound  $\bbE\big[ \widehat{\o}(\D_0) \widehat{\o}( \D'_k)^q\big]$ uniformly in $k$ by using that $xy^q\leq x^{q+1}+ y^{q+1}$ for $x,y\geq 0 $, the stationarity of $\bbP$ and the assumption that $\bbE\big[ \widehat{\o}([0,1]^d)^{q+1}\big]<+\infty$. As a consequence, the  expectation in \eqref{tempo25}  is bounded  by some  constant determined by $\d$ and $p$.}
\end{proof}

\begin{Remark}\label{bischero} Instead of the above analysis, we could have  used the bound (3.9) in \cite[Prop.~1]{FM}, applied by taking as set   \cite[Eq.~(3.1)]{FM} the edge set  of $\cG [\z,\b]$ and $\z=\ell(\b)$ for a suitable function $\ell(\b)$. On the other hand, we found that fixing now $\ell(\b)$ and introducing the notation of \cite{FM} would have been less trasparent. Note that, in any case, the final choice of the test functions differs from the one in the proof of \cite[Theorem~1]{FM} (see in particular \cite[p.~276]{FM}).
  \end{Remark}

For the next result recall  the definition \eqref{box_pokemon}  of the configuration $\o _{\z,\b}\in\O$     for $\o \in \O$  and $\z,\b>0$, i.e. 
\begin{equation*}
\begin{split}
 \o_{\z,\b}:& =\big\{ \big ( x / \z, \b E_x/\z \big) \,: \,  x\in \widehat{\o}_{\z/\b} \big \}=\big\{ \big ( x/\z, \b E_x/\z\big) \,: \, x\in\widehat \o  \,, |E_x| \leq \z /\b \big\} \,.
\end{split}
\end{equation*}
 Recall that $\o_{\z, \b}$ equals   \eqref{gandhi} 
when $\z=\ell(\b)$. \rosso{Recall also that $\nu(\g)=\nu([-\g,\g])$ and that  $\nu_{\star,\g}$ is the law of  $X/\g$ conditioned to the event $|X/\g|\leq 1$ when $X$ is a random variable with law $\nu$ (cf. respectively \eqref{alloro} and \eqref{sette2}).}

 \begin{Lemma}\label{serafico} Let $\bbP$  be a stationary marked SPP with finite and positive intensity \orc{$\rho$}.  Let  $\z,\b>0$ and let  $\bar \bbP$ be the law of the configuration  $\o_{\z,\b}$   when $\o$ is sampled according to $\bbP$. 
Suppose that $\vpz(\cD)>0$, where
\[ \cD:=\{\o\in \O_0\,:\,  0
 \in \widehat{\o}_{\z/\b} \}=\{\o\in \O_0\,:\,   |E_0|\leq \z/\b\}  \,.
 \] Then:
\begin{itemize}
\item[(i)]  $\bar\bbP$ is a stationary marked  SPP with finite and positive  intensity; 
\item[(ii)] for any  nonnegative Borel function $h$ on $\bbR$  with $h(0)=0$ it holds 
 \be\label{zoomo}
\bbE_0 \left[ \, h \left({\rm diam}\, C[ \z, \b]  \right) \,\right]=  \vpz (\cD)    \bar \bbE_0 \left[ \, h \left(\z\, {\rm diam}\,C[ 1, 1] \right)\, \right] 
  \en
  where $\bar\bbE_0$ is  the expectation w.r.t. the  Palm distribution  $\bvpz$ associated to $\bar \bbP$.
  \end{itemize}
  
  Moreover, if $\bbP=\hat\bbP_\nu$,  then $\vpz(\cD)=\nu( \z/\b)$ and $\bar \bbP$ equals 
   $(\hat \bbP_p\circ \Psi_{\z}^{-1})_{\nu_{\star,  \z/\b}}$ \rot{with $p:=\nu(\z/\b)$}. If in addition $\bbP={\rm PPP}[\rho,\nu]$, then $\bar \bbP$ equals $ {\rm PPP}[\rho \nu(\z/\b)\z^d   ,\nu_{\star,  \z/\b}] $.
   
      \end{Lemma}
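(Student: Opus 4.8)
The plan is to exploit Lemma \ref{quinoa} together with the rescaling identities in Lemmas \ref{fadeev_250} and \ref{fadeev_new}. First, observe that $\o_{\z,\b}$ is obtained from $\o$ in two steps: an energy-based thinning $\o \mapsto \o_{\z/\b}$ followed by the homothety $\Psi_\z$ on space coordinates combined with the rescaling $E_x \mapsto \b E_x/\z$ on energies. Write $\bbQ$ for the law of $\o_{\z/\b}$ when $\o \sim \bbP$. Since $\bbP(\o_{\z/\b}\not=\emptyset) \geq \vpz(\cD)>0$ (by the relation $\rho\,\vpz(\cD)=\rho_\g$ from Lemma \ref{quinoa}, or directly since a positive Palm probability of $\{|E_0|\le\z/\b\}$ forces $\bbP(\o_{\z/\b}\neq\emptyset)>0$), Lemma \ref{quinoa} applies with $\g=\z/\b$: $\bbQ$ is a stationary marked SPP with finite positive intensity $\rho_{\z/\b}$, one has $\vpz(0\in\widehat\o_{\z/\b})=\rho_{\z/\b}/\rho=\vpz(\cD)$, and the Palm distribution $\vqz$ of $\bbQ$ equals the law of $\o_{\z/\b}$ with $\o\sim\vpz(\cdot\mid 0\in\widehat\o_{\z/\b})=\vpz(\cdot\mid\cD)$. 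Then $\bar\bbP$ is the pushforward of $\bbQ$ under the (invertible, space-homothety-type) map $\o\mapsto\{(x/\z,\b E_x/\z):x\in\widehat\o\}$, which commutes with spatial translations in the sense $\t_{x/\z}(\o_{\z,\b})=(\t_x\o)_{\z,\b}$; this immediately gives Item (i), and shows that $\bvpz$, the Palm distribution of $\bar\bbP$, is the pushforward under the same map of $\vqz$. Concretely, $\bar\bbP$-a.s. configurations are stationary because a space-homothety of a stationary point process is stationary, and the intensity scales by $\z^{-d}$, hence stays finite and positive.

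For Item (ii), combine the two descriptions. By Lemma \ref{fadeev_new}(i), $C[\z,\b](\o)$ — the cluster of the origin in $\cG[\z,\b](\o)$ — is the singleton $\{0\}$ unless $0\in\widehat\o_{\z/\b}$, in which case it coincides with the cluster of the origin in $\cG[\z,\b](\o_{\z/\b})$. Since $h(0)=0$ and $\mathrm{diam}\{0\}=0$, only the event $\cD=\{0\in\widehat\o_{\z/\b}\}$ contributes, so
\[
\bbE_0\bigl[h(\mathrm{diam}\,C[\z,\b])\bigr]=\vpz(\cD)\,\bbE_0\bigl[h(\mathrm{diam}\,C[\z,\b](\o_{\z/\b}))\bigm|\cD\bigr].
\]
Under $\vpz(\cdot\mid\cD)$ the law of $\o_{\z/\b}$ is precisely $\vqz$ by Lemma \ref{quinoa}. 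Now apply Lemma \ref{fadeev_new}(ii): the map $\Psi_\z:x\mapsto x/\z$ is a graph isomorphism from $\cG[\z,\b](\o_{\z/\b})$ onto $\cG[1,1](\o_{\z,\b})$, sending the cluster of $0$ to the cluster of $0$ and scaling Euclidean diameters by $1/\z$, i.e. $\mathrm{diam}\,C[\z,\b](\o_{\z/\b})=\z\,\mathrm{diam}\,C[1,1](\o_{\z,\b})$. Transporting the expectation under $\vqz$ to an expectation under its pushforward $\bvpz$ yields
\[
\bbE_0\bigl[h(\mathrm{diam}\,C[\z,\b])\bigr]=\vpz(\cD)\,\bar\bbE_0\bigl[h(\z\,\mathrm{diam}\,C[1,1])\bigr],
\]
which is \eqref{zoomo}. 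One technical point to check here is that the cluster-of-the-origin and its diameter are genuinely measurable functions on $\O_0$ so that the pushforward of expectations is legitimate; this is routine given the generators of $\cB(\O)$ recalled in Section \ref{ssec_marked_SPP}.

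For the final sentence, assume $\bbP=\hat\bbP_\nu$. Then Lemma \ref{fadeev_250} (applied with $\g=\z/\b$, $p=\nu(\z/\b)$, $z=\z$) gives directly that $\o_{\z,\b}$ has law $(\hat\bbP_p\circ\Psi_\z^{-1})_{\nu_{\star,\z/\b}}$, i.e. $\bar\bbP=(\hat\bbP_p\circ\Psi_\z^{-1})_{\nu_{\star,\z/\b}}$; and by Lemma \ref{quinoa} applied to $\bbP=\hat\bbP_\nu$ one has $\vpz(\cD)=\vpz(|E_0|\le\z/\b)=\nu([-\z/\b,\z/\b])=\nu(\z/\b)$, using that for a $\nu$-randomized SPP the Palm distribution marks the origin with an independent $\nu$-distributed energy (as recalled after Definition \ref{def_palm}). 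Finally, if $\bbP={\rm PPP}[\rho,\nu]$, then $\hat\bbP={\rm PPP}[\rho]$, so by Proposition \ref{cotechino}(i)–(ii) $\hat\bbP_p\circ\Psi_\z^{-1}={\rm PPP}[p\rho]\circ\Psi_\z^{-1}={\rm PPP}[p\rho\z^d]$, whence $\bar\bbP={\rm PPP}[\rho\,\nu(\z/\b)\,\z^d,\nu_{\star,\z/\b}]$, exactly the last identity in Lemma \ref{fadeev_250}. The only place requiring any real care is the bookkeeping of which map (energy-thinning vs.\ homothety) acts on which coordinate and the verification that it commutes with the spatial translations defining the Palm distribution; everything else is an assembly of the cited lemmas.
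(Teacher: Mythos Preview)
Your proof is correct and follows essentially the same approach as the paper: both introduce the intermediate law $\bbQ$ of $\o_{\z/\b}$, invoke Lemma~\ref{quinoa} to identify $\vqz$ with the law of $\o_{\z/\b}$ under $\vpz(\cdot\mid\cD)$, realize $\bar\bbP$ and $\bvpz$ as pushforwards of $\bbQ$ and $\vqz$ under the homothety-plus-energy-rescaling, and then apply Lemma~\ref{fadeev_new} to translate the cluster diameter. The only cosmetic difference is that the paper computes $\vpz(\cD)=\nu(\z/\b)$ via an explicit Campbell identity, while you invoke the fact $\vpz=(\hvpz)_\nu$ recalled after Definition~\ref{def_palm}; both are valid.
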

 Note that by Item (i) the Palm distribution $\bvpz$ associated to $\bar\bbP$ and appearing in Item (ii)  is well defined.
 \begin{proof}
We first observe that 
 \be\label{ho_sete_78}
  \begin{split}
   \bbE_0 \left[ h\bigl( {\rm diam}\,  C[\z,\b] \bigr)    \right]& =
     \bbE_0 \left[ h\bigl( {\rm diam}\,  C[\z,\b] \bigr)  \rosso{\mathds{1}_\cD} \right]\\
     & = \vpz(\cD)  \bbE_0 \left[ h\bigl( {\rm diam}\, C[\z,\b] \,\bigr)  |\, \cD \right] \,.
     \end{split} 
     \en
     Indeed, the first identity follows from the hypothesis that $h(0)=0$ and the fact that  $ C[\z,\b] (\o) =\{0\}$ if   $|E_0 | >\z/\b$  (see \orc{Lemma}~\ref{fadeev_new}--(i)).
The second identity is just a  conditioning, which is allowed by the hypothesis that \orc{$\vpz(\cD)>0$}.

%
We call 
 $\vvv{\bbQ}$ the law of 
  $\o _{\z/\b}$  when $\o$ is sampled according to $\bbP$. By Lemma 
  \ref{quinoa}  $\vvv{\bbQ}$ is a stationary marked  SPP with finite and positive  intensity. We call $\vqz$ the Palm distribution associated to $\vvv{\bbQ}$.
 By  Lemma \ref{quinoa} $\vqz$ equals the law of $\o _{\z/\b}$ when $\o$ is sampled according to $\vpz(\cdot | \orc{\cD} )$. Since in addition (by \orc{Lemma}~\ref{fadeev_new}--(i))  $\text{diam}\, C[\z,\b] (\o)= \text{diam}\, C[\z,\b] (\o_{\z/\b})$ if $\o\in \cD$, we conclude that 
  \be \label{ho_sete_1} 
 \bbE_0 \left[ h\bigl( \text{diam}\, C[\z,\b]  \bigr) | \cD \right] =\int_{\O_0} 
  d\vqz(\o) 
 h \left( \text{diam}\, C[\z ,\b](\o)\right)\,.
 \en 
Note that  $\bar \bbP$ is the image of $ \vvv{\bbQ}$ when applying the point-mark transformation $(x,E_x)\mapsto  \bigl ( x/\z, \b E_x/\z \bigr) $. It then follows that 
$\bar \bbP$ is a stationary SPP with finite and positive  intensity, and that 
$\bvpz$ is the image of $ \vqz$ by the above transformation. To conclude we observe that, by \orc{Lemma} \ref{fadeev_new}, 
the bijection 
\be\label{vettorino}
\o_{\z/\b}=\{(x,E_x)\,:\, x\in \widehat{\o} _{\z/\b} \}\mapsto \{(x/\z, \b E_x/\z)\,:\,x\in \widehat{\o} _{\z/\b}\} \rot{=}\o_{\z,\b}
\en
induces naturally a bijection between $\cG[\z,\b](\o_{\z/\b})$ and $\cG[1,1](\o_{\z,\b})$.
 By combining this bijection  with the observation that  $\bvpz$ is the image of $ \vqz$ by the  transformation $(x,E_x)\mapsto  \bigl ( x/\z, \b E_x/\z \bigr) $, we then conclude that 
 \be\label{smeraldo} 
 \int_{\O_0} 
  d\vqz\orc{(\o)}
 h \left( \text{diam}\, C[\z ,\b](\o) \right)
 = \int_{\O_0} d\bvpz(\o) 
 h \left(\z \text{diam}\, C[1 ,1](\o) \right)
 \,.
\en
By combining \eqref{ho_sete_78}, \eqref{ho_sete_1} 
and \eqref{smeraldo}  we then get  
 \eqref{zoomo}.
 
  We now move to the last  statement. Let $\bbP:=\hat\bbP_\nu$. Then by  Campbell's formula \eqref{marlena}, by conditioning on $\widehat{\o}$  and by the definition of $\nu$--randomization,  we have 
 \[
 \vpz ( \cD)
 =  \frac{1}{\orc{\rho}}\bbE \Big[ \bbE\big[ \sum _{x\in \widehat \o \cap [0,1]^d}  \mathds{1} ( \t_x \o \in \cD)\,|\, \widehat \o \big]\Big]= \frac{\nu (\z/\b) }{\orc{\rho}}\bbE \big[ \widehat{\o} \big( [0,1]^d\bigr) \big]= 
 \nu (\z/\b) \,. \]
This proves that $ \vpz ( \cD)=\nu(\z/\b)$. The rest of the final statement follows  
   from \orc{Lemma \ref{fadeev_250}}.   \end{proof}

\verde{As a combination of Lemmas~\ref{santo} and \ref{serafico} we get the following:}
\begin{proof}[Proof of Theorem~\ref{supersanto}] \verde{Fix $\d\in (0,1)$. Due to  Lemma~\ref{serafico} with $\z:=\ell(\b)$ and since trivially $\bar{\bbP}^0(\cD)\leq 1$ there,  the bound \eqref{francesco} implies 
\be\label{francescobis} 
\begin{split}
D(\b)_{1,1} & \leq c n^{-1/p}\ell(\b)^{1/p} {\bbE}^\b_0\big[ {\rm diam}\, C[1,1]\big]^{1/p} \\
&+
c
e^{-(1-\d)\ell(\b)} (1+\ell(\b)^{2} {\bbE}^\b_0 \bigl[ ( {\rm diam}\, C[1,1])^{2p}  \bigr] ^{1/p})\,. 
\end{split}
\en
Due to \eqref{limitino77}, for $\b$ large both the expectations in the r.h.s. of \eqref{francescobis} are finite. We first take the limsup as $n\to+\infty$ to remove the first term in the r.h.s. Then we get
$D(\b)_{1,1} \leq c
e^{-(1-\d)\ell(\b)} (1+\ell(\b)^{2} {\bbE}^\b_0 \bigl[ ( {\rm diam}\, C[1,1])^{2p}  \bigr] ^{1/p})$.
 By using \eqref{limitino77} and  the arbitrariness of $\d$ we get \eqref{pingpong}.}
\end{proof}

\subsection{Proof of Theorem \ref{parataUB}}\label{sec_proof_parataUB}
 \verde{By Fact~\ref{teo_eff_cond} it is enough to prove \eqref{preUB}.  We apply  Theorem~\ref{supersanto} by taking $\bbP:={\rm PPP}[\rho,\nu]$,   $\nu \in \cV^+_\a\cup  \cV_\a$ and $\tilde C:= C_\nu$.  Since by this choice  \eqref{pingpong} coincides with \eqref{preUB}, it remains to verify the assumptions of 
Theorem~\ref{supersanto}  in the present setting.
 Due to  Proposition~\ref{paradiso_PPP}, for $\b $ large enough to have $\ell(\b)\leq \min\{C_\nu, \e_\nu\}\b$,   the law $ \bbP^\b$ of $\o_{\ell(\b),\b}$ is given by \eqref{zoomo4}.
Then  all moments of  $\widehat{\o}([0,1]^d)$ are finite for the  marked PPP \eqref{zoomo4} (due to the analogous property for homogeneous PPPs on $\bbR^d$). On the other hand,  \eqref{gioia77} is an immediate consequence of the following percolation result which assures that  ${\rm diam}\,C [1, 1] $ has finite moments w.r.t. the  marked PPP \eqref{zoomo4}.}

\begin{Fact}[A.~Faggionato, A.H.~Mimun \cite{FagMim1}]\label{patatine} Let \rot{$\bbP^0$} be the Palm distribution associated to $\rot{\bbP}:= {\rm PPP}[\l,\nu_{1,  \a}]$ with $\l<\l_c (\a)$ (subcritical case).
Then
there exists a positive constant $c=c(\l,\a)$  such that 
\be
\rot{\bbP^0}\left( {\rm diam}\,C [1, 1] > n   \right) \leq e^{- c \,n } \qquad 
 \forall n \in \bbN\,.
 \en
 The same result holds if we replace $\nu_{1,  \a}, \l_c (\a)$ with $\nu^+_{1,  \a}, \l^+_c (\a)$, respectively.
 \end{Fact}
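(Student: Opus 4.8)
The plan is to reduce the claim to exponential decay of the two-point connectivity function of the subcritical graph $\cG[1,1]$, and to obtain the latter from sharpness of the phase transition. For the reduction, recall that under $\bvpz$ the origin is a.s.\ a point of the process, and in fact $\bvpz$ is the law of ${\rm PPP}[\l,\nu_{1,\a}]$ together with an independent extra point $(0,E)$, $E\sim\nu_{1,\a}$ (see the realization of the Palm distribution recalled in Section~\ref{palmato}); thus $C[1,1]$ is the cluster of the origin. The structural feature I would exploit is that $\cG[1,1]$ has \emph{uniformly bounded edges}: by \eqref{connectcont} every edge $\{x,y\}$ satisfies $|x-y|\le 1$. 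Hence, if ${\rm diam}\,C[1,1]>n$, then the cluster of the origin contains a point at sup-distance $>n/2$ from the origin and, following a path of unit-length edges, must meet the shell $\{z:\,n/2-1\le|z|_\infty\le n/2\}$. Covering this shell by $O(n^{d-1})$ unit boxes $Q$ and writing $\{0\longleftrightarrow Q\}$ for the event that the cluster of the origin meets $Q$, a union bound gives
\[
\bvpz\bigl({\rm diam}\,C[1,1]>n\bigr)\ \le\ C\,n^{d-1}\!\!\!\sup_{\,Q\,\text{at distance}\,\ge n/2-1}\!\!\!\bvpz\bigl(0\longleftrightarrow Q\bigr),
\]
so it suffices to prove $\bvpz(0\longleftrightarrow Q)\le C'e^{-c'\,{\rm dist}(0,Q)}$ for all unit boxes $Q$; passing from this to the stated bound $e^{-cn}$ valid for \emph{all} $n\in\bbN$ is a routine adjustment of constants, using in addition that the origin is isolated in $\cG[1,1]$ with positive probability.

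For the connectivity decay I would invoke sharpness of the subcritical phase. Realize the marked process as a Poisson point process on $\bbR^d\times[-1,1]$ with intensity $\l\,dx\otimes\nu_{1,\a}$, so that the family of graphs $\cG[1,1]$ is increasing in $\l$ (adding Poisson points only adds edges) and the events $\{0\longleftrightarrow Q\}$ are increasing in the Poisson configuration. In this monotone, bounded-range, Poisson-based setting one can apply the OSSS inequality within a randomized decision-tree (Margulis--Russo) argument, exactly as in the treatment of Poisson--Boolean percolation by Duminil-Copin, Raoufi and Tassion: applying the OSSS inequality to the algorithm that reveals the cluster of the origin inside $\L_n$ produces, schematically, a differential inequality of the form $\theta_n'(\l)\ge\frac{c\,n}{\sum_{k\le n}\theta_k(\l)}\,\theta_n(\l)$ for $\theta_n(\l):=\bvpz(0\longleftrightarrow\partial\L_n)$, which forces the dichotomy: either $\theta_\infty(\l)>0$, or $\theta_n(\l)$ decays exponentially in $n$. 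Since $\l<\l_c(\a)$ excludes the first alternative, $\theta_n(\l)$ --- and hence $\bvpz(0\longleftrightarrow Q)$ for boxes at distance of order $n$ --- decays exponentially, which is what the first step requires. A classical alternative is to run Menshikov's scheme, or the Aizenman--Barsky differential inequalities, in the continuum.

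The $\nu^+_{1,\a}$ case is handled identically: the graph $\cG[1,1]$ on ${\rm PPP}[\l,\nu^+_{1,\a}]$ still has uniformly bounded edges, is monotone in $\l$, and $\l^+_c(\a)$ is the same percolation threshold, so the two steps apply verbatim. The part I expect to require real work is the rigorous implementation of the sharpness step for a \emph{marked} point process: the connection rule $|x-y|+(|E_x|+|E_y|+|E_x-E_y|)\le1$ depends on the marks in a genuinely two-dimensional way, so this is not literally a Boolean model, and one must either set up the OSSS and decision-tree apparatus directly for Poisson processes on $\bbR^d\times[-1,1]$ (measurability of the revealed region, control of the portion of the configuration it reveals, a continuum Margulis--Russo formula) or pass to a discretized lattice model while preserving monotonicity and the identification of critical points. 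Once sharpness is available, the geometric reduction of the first step is entirely standard.
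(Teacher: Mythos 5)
Your proposal and the paper's argument are genuinely different in character. The paper's proof of this Fact is essentially a pointer: it invokes \cite[Thm.~1.4]{FagMim1} as a black box, and the only new content supplied here is the verification (Appendix~\ref{liberato}) that the critical intensity appearing in \cite[Eq.~(1.5)]{FagMim1} coincides with $\l_c(\a)$, resp.~$\l^+_c(\a)$, as defined in Definition~\ref{fiordo}. You instead attempt a self-contained proof: reduce the diameter tail to a two-point (box-connectivity) tail using the fact that edges of $\cG[1,1]$ have Euclidean length $\leq 1$, and then derive exponential decay of the two-point function from a continuum OSSS/Margulis--Russo sharpness argument \`a la Duminil-Copin--Raoufi--Tassion. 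Your geometric reduction is fine as stated: the coordinate-wise diameter exceeding $n$ forces the cluster of $0$ to meet a shell at sup-distance of order $n/2$, a union bound over $O(n^{d-1})$ unit boxes applies, and the polynomial prefactor and small-$n$ cases are absorbed by shrinking $c$, using that under $\bvpz$ the origin has positive probability of small clusters.

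The gap is exactly where you flag it, and it is not cosmetic: what you propose to prove via OSSS is, in effect, a re-derivation of the content of \cite[Thm.~1.4]{FagMim1} by a different method, and your sketch stops at the point where the real work begins (setting up decision trees, revealment, and a Margulis--Russo formula for the \emph{marked} Poisson process on $\bbR^d\times[-1,1]$ with the connection rule \eqref{connectcont}, and then closing the Menshikov-type differential inequality). A further point that your proposal does not address but the paper does: sharpness gives exponential decay below the threshold at which the Palm connectivity $\bvpz(0\leftrightarrow\infty)$ becomes positive, whereas the statement is phrased in terms of $\l_c(\a)$, defined via a.s.~percolation of the stationary graph in Definition~\ref{fiordo}. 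These two thresholds need to be identified; the paper proves this in Appendix~\ref{liberato} via Campbell's formula, and your argument would need the same identification (or a direct proof that the two thresholds coincide) to conclude for all $\l<\l_c(\a)$. One small reassurance: your worry about FKG is not an obstruction here --- OSSS-based sharpness does not use FKG, only Poisson structure and monotonicity of the relevant events in the configuration, both of which you correctly identify --- and indeed the paper needs FKG only for the \emph{supercritical} LR-crossing result (Fact~\ref{cortocircuito}, relying on \cite{FagMim2}), not for this subcritical Fact, which is why the present statement covers both $\nu_{1,\a}$ and $\nu^+_{1,\a}$.
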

The above fact follows by applying \cite[Thm.~1.4]{FagMim1} together with the observation that the critical intensities   $\l_c(\a)$, $\l_c^+(\a)$ coincide with the one in \cite[Eq.~(1.5)]{FagMim1} as proved in Appendix \ref{liberato} (see also the discussion above Theorem 1.4 in \cite{FagMim1} for the check of the assumptions in our case).

\section{\verde{Proof of Theorems~\ref{parataLB} and~\ref{santantonio}  (lower bound in Mott's law)}}\label{sec_LB}

\verde{In this section we prove Theorem~\ref{santantonio}. By applying it, in Section~\ref{sec_proof_parataLB} we prove Theorem~\ref{parataLB}}. 

We can extend the definitions introduced in  Section \ref{fumata} to general  graphs. More precisely, 
given a weighted non--oriented graph $\bbG=(\bbV,\bbE)$ with weight function $C: \bbE\to [0,+\infty)$ and vertex set $\bbV \subset \bbR^d$ and given $\ell>0$, we consider the resistor network on $S_\ell$ with nodes $x\in \bbV \cap S_\ell$, edges $\{x,y\}$ with $x,y\in \bbV \cap S_\ell$ and $\{x,y\}\cap \L_\ell \not = \emptyset$ and electrical conductivity of the edge  $\{x,y\}$ given by $C(x,y)$.

As in the derivation of \eqref{papero}, one can easily \rot{prove} Dirichlet's  principle (cf.~\cite[Exercise~1.3.11]{DS}):
\be\label{papero_bis}
\s_\ell (\bbG,C )=\inf \left\{ \cD_\ell(u) \,\big{|} \, u: \bbV \cap S_\ell \to [0,1]\,, u_{|\bbV \cap S_\ell^-} \equiv 1
\,, \; u_{|\bbV \cap S_\ell^+}\equiv0  \right\}\,,
\en
where 
\begin{align*}
&  \cD_\ell(u) := \sum _{\{x,y\} \in \bbE_\ell} C(x,y) \left[ u(y)- u(x) \right]^2\,,\\
& \bbE_\ell:=\bigl\{\,\{x,y\}\in \bbE\,:\, x \in  \bbV \cap \L_\ell\,,\; y \in \bbV \cap S_\ell, \; x\not=y\bigr\}\,.
\end{align*}
As a consequence,   $\s_\ell (\bbG,C)$ satisfies  Rayleigh's monotonicity law (cf.~\cite[Section~1.4]{DS}): 
\be\label{rayleigh_mon_law}
\bbG\supset \bbG' \text{ and } C\geq C'\;\;\Rightarrow \;\; \s_\ell(\bbG, C)\geq \s_\ell(\bbG', C')\,.
\en


\medskip

Recall that, given a graph $\bbG=(\bbV,\bbE)$ with vertex set  $\bbV\subset \bbR^d$,   $\cN_\ell(\bbG)$ denotes  the  maximal number of  vertex-disjoint LR crossings of $\L_\ell$ in $\bbG$ (cf. Definition \ref{def_LR}).

\begin{Lemma}\label{lemma_LB_chiave}
Given a graph $\bbG=(\bbV,\bbE)$ with vertex set  $\bbV\subset \bbR^d$, we have\be\label{LB_chiave}
\s_\ell (\bbG, 1) \geq \frac{\cN_\ell(\bbG) ^2 }{2\cN_\ell(\bbG)+ |\bbV \cap \L_\ell|}\geq \frac{\cN_\ell(\bbG) ^2 }{3 |\bbV \cap \L_\ell|} \,.
\en
\end{Lemma}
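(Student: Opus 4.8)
The plan is to use Rayleigh's monotonicity law \eqref{rayleigh_mon_law} to discard all edges except those forming a maximal family of vertex-disjoint LR crossings, then to compute exactly the conductivity of the resulting parallel-of-paths network and bound it below by Cauchy--Schwarz. Write $N:=\cN_\ell(\bbG)$. If $N=0$ there is nothing to prove: when $\bbV\cap\L_\ell\neq\emptyset$ the right-hand side of \eqref{LB_chiave} is $0\le\s_\ell(\bbG,1)$, and when $\bbV\cap\L_\ell=\emptyset$ the statement is vacuous (no LR crossing can exist). So we may assume $N\ge 1$, which already forces $|\bbV\cap\L_\ell|\ge 1$, since every LR crossing has at least one interior vertex and such vertices lie in $\L_\ell$.

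First I would fix $N$ pairwise vertex-disjoint LR crossings $P_1,\dots,P_N$ of $\L_\ell$ in $\bbG$ realizing the maximum (Definition \ref{def_LR}), and let $m_j\ge 2$ denote the number of edges of $P_j$, so that $P_j$ has $m_j+1$ vertices, of which $m_j-1$ are interior and lie in $\bbV\cap\L_\ell$, while the two endpoints lie in $S_\ell^-$ and $S_\ell^+$ respectively. Let $\bbG'$ be the subgraph of $\bbG$ whose vertices and edges are exactly those appearing in $P_1,\dots,P_N$. Each edge of each $P_j$ has at least one endpoint in $\L_\ell$ (the interior endpoints, and for the two boundary edges the neighbour of the endpoint), hence belongs to the edge set of the finite-volume resistor network on $S_\ell$; thus $\bbG'$ is a legitimate subnetwork, and Rayleigh's law \eqref{rayleigh_mon_law} applied with unit conductances gives $\s_\ell(\bbG,1)\ge\s_\ell(\bbG',1)$.

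Because the $P_j$ are vertex-disjoint, $\bbG'$ is a disjoint union of $N$ simple paths, the $j$-th joining a vertex of $S_\ell^-$ to a vertex of $S_\ell^+$ with all intermediate vertices in $\L_\ell$; in particular only the two endpoints of each $P_j$ sit on $S_\ell^-\cup S_\ell^+$, so no path is short-circuited by the wired boundary condition. Using Dirichlet's principle \eqref{papero_bis} (the minimizing potential is affine along each path, equal to $1-i/m_j$ at the $i$-th vertex of $P_j$) one computes $\s_\ell(\bbG',1)=\sum_{j=1}^N 1/m_j$. Since the interiors of the $P_j$ are pairwise-disjoint subsets of $\bbV\cap\L_\ell$, we have $\sum_{j=1}^N(m_j-1)\le|\bbV\cap\L_\ell|$, i.e.\ $\sum_{j=1}^N m_j\le|\bbV\cap\L_\ell|+N\le|\bbV\cap\L_\ell|+2N$. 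By Cauchy--Schwarz, $N^2=\bigl(\sum_j m_j^{1/2}m_j^{-1/2}\bigr)^2\le\bigl(\sum_j m_j\bigr)\bigl(\sum_j m_j^{-1}\bigr)$, hence
\[
\s_\ell(\bbG,1)\ \ge\ \sum_{j=1}^N\frac1{m_j}\ \ge\ \frac{N^2}{\sum_{j}m_j}\ \ge\ \frac{N^2}{2N+|\bbV\cap\L_\ell|}\,,
\]
which is the first inequality in \eqref{LB_chiave}. The second follows at once since $N=\cN_\ell(\bbG)\le|\bbV\cap\L_\ell|$ (the $N$ disjoint crossings supply $N$ distinct interior vertices of $\L_\ell$), so $2N+|\bbV\cap\L_\ell|\le 3|\bbV\cap\L_\ell|$.

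The only step requiring genuine care — and what I would flag as the main (still minor) obstacle — is checking that $\bbG'$ is a bona fide subnetwork of the finite-volume resistor network on $S_\ell$ and that its effective conductivity is exactly $\sum_j 1/m_j$: this relies on the crossings' interior vertices lying in $\L_\ell$ (so every crossing edge survives in the restricted edge set) and on each path meeting $S_\ell^-\cup S_\ell^+$ only at its endpoints. Everything else is the series/parallel computation, the vertex count, and Cauchy--Schwarz. Alternatively, one can bypass Rayleigh's law and argue directly via Thomson's principle: routing a flow of mass $1/N$ along each $P_j$ yields a unit flow from $S_\ell^-$ to $S_\ell^+$ of energy $\sum_j m_j/N^2\le(2N+|\bbV\cap\L_\ell|)/N^2$, and $\s_\ell(\bbG,1)$ is the reciprocal of the minimal energy of such a flow.
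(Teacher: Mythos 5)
Your proof is correct and follows essentially the same route as the paper's: restrict attention to a maximal family of vertex-disjoint LR crossings, obtain the lower bound $\sum_j 1/m_j$ for the conductivity, dominate this from below by $N^2/\sum_j m_j$ (you use Cauchy--Schwarz, the paper uses Jensen applied to $1/x$ -- the same elementary inequality), and finish with the vertex count $\sum_j m_j\le N+|\bbV\cap\L_\ell|\le 2N+|\bbV\cap\L_\ell|$. The only cosmetic differences are that you pass to the subgraph $\bbG'$ via Rayleigh and compute its conductivity exactly as a parallel union of series resistors, whereas the paper lower bounds the Dirichlet energy of the electrical potential of $\bbG$ by restricting to the edge-disjoint crossing edges and applying Cauchy--Schwarz per crossing (and counts vertices, $|\pi_i|=m_j+1$, rather than edges); both yield the identical final bound.
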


\begin{proof} Let $n:= \cN_\ell (\bbG)$.
Let $u : \bbV\cap S_\ell \to [0,1]$ be a function with $u_{ |\bbV \cap S_\ell^-}\equiv 1$ and $u_{|\bbV \cap S_\ell^+}\equiv 0$. We set $\nabla_{x,y}u :=u(y)-u(x)$.  We fix a family  $\p_1,\ldots,\p_n$ of  vertex--disjoint LR crossings of $\L_\ell$ in $\bbG$. Recall that $\pi_i $ is a sequence $(x_1, x_2, \dots, x_k)$ of $k\geq 3$ distinct points in $\bbV$ such that 
$\{x_r,x_{r+1}\}\in \bbE$ for all $r\in\{1,\dots, k-1\}$, 
$x_1 \in \cS_\ell^-$, $x_k \in \cS_\ell^+$ and $x_r\in \L_\ell  $ for $1< r<k$. Below, we write $\{x,y\}\in \pi_i$ if $\{x,y\}$ is an edge of $\pi_i$, i.e. $\{x,y\}=\{x_r, x_{r+1}\}$ for some some $r$. We also set $|\p_i|:= k$.

As the $\p_i$'s are vertex-disjoint, they are also edge-disjoint. Hence, by Rayleigh's monotonicity law (cf.~\eqref{rayleigh_mon_law}),  we can bound 
\begin{equation}
\label{low1}
\s_\ell (\bbG,1) \geq \sum_{\{x,y\}\in \bbE_\ell}  (\nabla_{x,y} u)^2 \geq\sum_{i=1}^n\sum_{\{x,y\}\in \p_i}(\nabla_{x,y} u)^2 
\,.
\end{equation}
By Cauchy-Schwarz's inequality we obtain  $1=\Big(\sum_{\{x,y\}\in \p_i}\nabla_{x,y} u\Big)^2$ is upper bounded by $ |\p_i|\sum_{\{x,y\}\in \p_i} (\nabla_{x,y} u)^2$.
Hence the r.h.s. in \eqref{low1} can be bounded from below as
\begin{equation}
\label{low2}
\begin{split}
\sum_{i=1}^n\sum_{\{x,y\}\in \p_i} (\nabla_{x,y} u)^2\geq  \sum_{i=1}^n\frac{1}{|\p_i|}.
\end{split}
\end{equation}
By Jensen's inequality  applied to the convex function $ x \mapsto 1/x$ for $x>0$,  we can bound from below the r.h.s. of \eqref{low2} as
\begin{equation}
\label{low3}
\begin{split}
 \sum_{i=1}^n\frac{1}{|\p_i|}&=n \cdot\frac{1}{n}\sum_{i=1}^n\frac{1}{|\p_i|}\geq n\left(\frac{1}{n}\sum_{i=1}^n|\p_i|\right)^{-1}
=\frac{n^2 }{\sum_{i=1}^n|\p_i|}
\end{split}
\end{equation}
(the above use of Jensen's inequality is not new,  see e.g.~the proof of  \cite[Prop.~3.2]{CC}).
Recall  that the $\pi_i$'s are vertex-disjoint  and  all \rot{vertices} of each $\pi_i$ are in $\bbV \cap \L_\ell$ apart \rot{from} the extreme ones.
As a consequence, we can bound  $\sum_{i=1}^n|\p_i|\leq  2n +| \bbV \cap \L_\ell|$.
 Combining this bound with \eqref{low1}, \eqref{low2} and \eqref{low3} we get  the first bound in \eqref{LB_chiave}. For the second one it is enough to observe  that the $\pi_i$'s are vertex-disjoint and that each $\pi_i$ has at least one point in $\bbV \cap \L_\ell$.
\end{proof}



\rot{The following result gives a lower bound of the conductivity $\s_\ell(\o, \b)$ by means of thinning and rescaling and it is based  on Lemma~\ref{fadeev_new}:}
\begin{Lemma}\label{lasagne25}
Let \magenta{$\z,\b,\ell>0$}. Then  for any $\o\in \O$ it holds 
\be\label{pasta_rossa}
\ell^{2-d}\s_\ell(\o, \b)\geq  e^{-\z} \z^{2-d} L ^{2-d} \s_{L} ( \cG[1,1](\o_{\z,\b}  ),1)\,, \qquad  L:= \ell/\z\,,
\en
where $\o_{\z,\b}$ is defined in \eqref{box_pokemon}.
\end{Lemma}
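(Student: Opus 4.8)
Setting $L:=\ell/\z$ one has $\ell^{2-d}=\z^{2-d}L^{2-d}$, so \eqref{pasta_rossa} is equivalent to $\s_\ell(\o,\b)\geq e^{-\z}\,\s_L\bigl(\cG[1,1](\o_{\z,\b}),1\bigr)$, and this is what the plan is to prove; one then just multiplies through by $\ell^{2-d}$. (The degenerate cases $\ell=0$, and $\b=0$ read with the convention $\z/\b=+\infty$, are trivial and I would set them aside at the start.) The argument consists in thinning the Miller--Abrahams network in three steps, invoking Rayleigh's monotonicity law \eqref{rayleigh_mon_law} each time.

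First I would take as starting point the graph $\bbG:={\rm MA}[\b](\o)$ of Definition~\ref{def_MA}, i.e.\ the complete graph on $\widehat\o$ with weight $C(x,y):=c_{x,y}(\o,\b)$; by Definition~\ref{def_MAelle} and the Dirichlet principle \eqref{papero}, $\s_\ell(\bbG,C)=\s_\ell(\o,\b)$. I would then pass to the subgraph $\bbG':=\cG[\z,\b](\o_{\z/\b})$: by Lemma~\ref{fadeev_new}(i) this is $\cG[\z,\b](\o)$ with the vertices $x$ satisfying $|E_x|>\z/\b$ (which are isolated there) deleted, so $\bbG'\subset\bbG$, and every edge of $\bbG'$ carries weight $C(x,y)=c_{x,y}(\o,\b)\geq e^{-\z}$ by the definition of $\cG[\z,\b]$. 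Equipping $\bbG'$ with the constant weight $e^{-\z}\leq C$, Rayleigh's monotonicity law \eqref{rayleigh_mon_law} gives $\s_\ell(\o,\b)=\s_\ell(\bbG,C)\geq\s_\ell(\bbG',e^{-\z})$; and since the Dirichlet form $\cD_\ell$, hence $\s_\ell(\cdot,\cdot)$, is homogeneous of degree one in the conductances (immediate from \eqref{papero_bis}), $\s_\ell(\bbG',e^{-\z})=e^{-\z}\,\s_\ell(\bbG',1)$.

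It remains to rescale. The homothety $\Psi_\z:x\mapsto x/\z$ of Definition~\ref{def_zaino} maps $S_\ell$, $S_\ell^{\pm}$ and $\L_\ell$ onto $S_L$, $S_L^{\pm}$ and $\L_L$ with $L=\ell/\z$, while by Lemma~\ref{fadeev_new}(ii) it carries $\bbG'=\cG[\z,\b](\o_{\z/\b})$ onto $\cG[1,1](\o_{\z,\b})$, with $\o_{\z,\b}$ as in \eqref{box_pokemon}. Since the class of admissible potentials and the Dirichlet form are invariant under relabelling vertices by a bijection that respects the splitting $S=S^-\cup\L\cup S^+$ (and likewise the convention $V\equiv0$ on the components meeting neither $S^-$ nor $S^+$), this relabelling yields $\s_\ell(\bbG',1)=\s_L\bigl(\cG[1,1](\o_{\z,\b}),1\bigr)$, the vertices lying outside $S_L$ being irrelevant by the very definition of the finite volume network. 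Chaining with the previous paragraph gives $\s_\ell(\o,\b)\geq e^{-\z}\s_L(\cG[1,1](\o_{\z,\b}),1)$, and multiplying by $\ell^{2-d}=\z^{2-d}L^{2-d}$ produces \eqref{pasta_rossa}. The step demanding the most care is this last piece of bookkeeping: verifying that erasing a vertex is covered by \eqref{rayleigh_mon_law} (it is the deletion of all edges incident to it) and that the box condition $\{x,y\}\cap\L_\ell\neq\emptyset$ together with the boundary values of the potential transform consistently under $\Psi_\z$; granted that, no deeper input is needed.
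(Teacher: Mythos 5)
Your argument is correct and follows essentially the same route as the paper's proof: lower-bound the conductances of edges of $\cG[\z,\b]$ by $e^{-\z}$, apply Rayleigh's monotonicity law together with linearity of the Dirichlet form in the conductances, and then use Lemma~\ref{fadeev_new}(ii) and the Dirichlet principle \eqref{papero_bis} to pass to the unit-scale graph $\cG[1,1](\o_{\z,\b})$ on the box $\L_L$. The only cosmetic difference is bookkeeping order — the paper first bounds $\s_\ell(\o,\b)\geq e^{-\z}\s_\ell(\cG[\z,\b](\o),1)$ on the full vertex set and then notes the isolated vertices outside $\widehat\o_{\z/\b}$ do not contribute, while you drop them first — so the content is the same.
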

\begin{proof}
We denote by $\cE[\z,\b]$ the edge set of the graph $\cG[\z,\b]$. By definition of $\cG[\z,\b]$ we have
\be \label{arca0}
c_{x,y}(\o, \b) \geq e^{-\z} \mathds{1} \bigl( \{x,y\} \in \cE[\z,\b](\o)\bigr)\,.
\en
We claim that 
\be \label{arca1}
\s_\ell\bigl(\o, \b\bigr) \geq e^{-\z} \s_\ell \bigl(\cG[\z,\b](\o) , 1\bigr)=   e^{-\z} \s_\ell \bigl(\cG[\z,\b](\o_{\z/\b} ) , 1\bigr) \,.
\en
Indeed, the  bound follows from Rayleigh's monotonicity law and \eqref{arca0}, while the identity follows from   Lemma~\ref{fadeev_new}--(i): points of $\widehat{\o}\setminus \widehat{\o}_{\z/\b}$ are isolated in the graph $\cG[\z,\b] $ and therefore they do not contribute to the conductivity. Recall now the  bijection between $\cG[\z,\b](\o_{\z/\b})$ and $\cG[1,1](\o_{\z,\b})$ given by \orc{Lemma} \ref{fadeev_new}--(ii). This bijection and \eqref{papero_bis} imply that 
$\s_\ell \bigl(\cG[\z,\b](\o_{\z/\b} ) , 1\bigr)= \s_{\ell/\z} \bigl(\cG[1,1](\o_{\z,\b} ) , 1\bigr)$. 
\orc{By combining  this identity with} \eqref{arca1}  we get \eqref{pasta_rossa}.
\end{proof}

By using Lemmas~\ref{lemma_LB_chiave} and~\ref{lasagne25} we can conclude the proof of Theorem~\ref{santantonio}:
\begin{proof}[Proof of Theorem~\ref{santantonio}]
\verde{By Lemmas  \ref{lemma_LB_chiave}  and  \ref{lasagne25}  with $\z=\ell(\b)$ (do not confuse $\ell$ and $\ell(\b)$), setting  $ L:= \ell /\z=\ell/\ell(\b)$ we have 
 \be\label{pasta_rossa_bis}
 \begin{split}
\ell^{2-d}\s_\ell(\o, \b)& \geq  e^{-\ell(\b) } \ell(\b) ^{2-d} L^{2-d} \s_{L} ( \cG[1,1](\o_{\ell(\b),\b}  ),1)\\
& \geq  e^{-\ell(\b) }\ell(\b) ^{2-d}L^{2-d} \frac{\cN_L\left( \cG[1,1](\o_{\ell(\b) ,\b}  )\right)^2}{ 3 | \widehat{\o_{\ell(\b) ,\b} } \cap \L_L|}\,.
\end{split}
\en}

\verde{We first take the limit $\ell \to +\infty$ thinking \rot{of}  $\b$ as fixed (trivially, $L\to +\infty$).  
 By the ergodic theorem we have that $(2L)^{-d} | \widehat{\o_{\ell(\b) ,\b} } \cap \L_L|$ converges to the intensity $\rho(\b)$ of $ \bbP^\b$   as $\ell \to +\infty$.
We then get that $\bbP$--a.s. 
\begin{multline}\label{pasta_bianca}
\limsup_{\ell\to +\infty} L ^{2-d} \frac{\cN_L\left( \cG[1,1](\o_{\ell(\b) ,\b}  )\right)^2}{ 3 | \widehat{\o_{\ell(\b) ,\b} } \cap \L_L|}\\
=\frac{1}{2^{d}}
\limsup_{\ell\to +\infty}\left(\frac{\cN_L\left( \cG[1,1](\o_{\ell(\b) ,\b}  )\right)}{L^{d-1}}\right)^2
\frac{(2L)^d}{3 | \widehat{\o_{\ell(\b) ,\b} } \cap \L_L|}
\\=\frac{1}{3 \rho (\b)2^d}\limsup_{\ell\to +\infty}\left(\frac{\cN_L\left( \cG[1,1](\o_{\ell(\b) ,\b}  )\right)}{L^{d-1}}\right)^2=:\frac{ C(\b)^2}{3 \rho (\b)2^d}
\,.
\end{multline}
We have (cf.~\eqref{gandhi}) that $\rho(\b)$ is upper bounded by the intensity of the SPP $\{ x/\ell(\b): x\in \widehat{\o}\}$ when $\o$ is sampled according to $\bbP$. Hence $\rho(\b) \leq \ell(\b)^d \rho$.
By combining Fact~\ref{teo_eff_cond} with  \eqref{pasta_rossa_bis} and \eqref{pasta_bianca}, we get 
\be\label{rispetto}
\s(\b)=\lim_{\ell\to +\infty}\ell^{2-d}\s_\ell(\o, \b) \geq e^{-\ell(\b) } \ell(\b) ^{2-d}  C(\b)^2 (3 \rho \ell(\b)^d  2^d)^{-1} \,.
\en
Since $\ell(\b):= (\l  \tilde C^{\a+1}/\rho)^{\frac{1}{\a+1+d}} \b ^{\frac{\a+1}{\a+1+d}}$, by taking 
$\liminf _{\b \uparrow \infty}  \b^{-\frac{\a+1}{\a+1+d}} \ln(\cdot) $ of both sides of \eqref{rispetto} and using \eqref{squid_game_3}, we get  \eqref{polpetta}.}
\end{proof}

\subsection{Proof of Theorem~\ref{parataLB}}\label{sec_proof_parataLB}
\verde{By Fact~\ref{teo_eff_cond} it is enough to prove \eqref{LB}. We apply Theorem~\ref{santantonio} with $\bbP:={\rm PPP}[\rho, \nu]$,  $\tilde C:=C_\nu$ and $\l<\l_c(\a)$. To this aim we comment the validity of \eqref{squid_game_33} (the other assumptions can be easily verified). We first observe that  $\bbP^\b$ is given by \eqref{zoomo4} for $\b$ large (cf.~Proposition~\ref{paradiso_PPP}). As a consequence \eqref{squid_game_3} follows from Fact~\ref{cortocircuito} when $\nu\in \cV_\a^+$ and by  our hypothesis of good statistics of LR crossings for $\nu\in\cV_\a$ (which  is a claimed result in \cite{FH}). Having \eqref{polpetta}, to conclude it is enough to invoke the arbitrariness of $\l<\l_c(\a)$.}

\appendix

\section{Critical values $\z_c (\b,\r,\nu)$, $\l_c(\a)$ and  $\l^+_c(\a)$}\label{guizzo}
%

Given $r>0$ and  $\hat \o \in \hat \O$, the Boolean model $G_r(\hat \o)$ with \rot{deterministic} radius $r$ is the graph with vertex set $\hat \o$ and edges $\{x,y\}$ with $0<|x-y|\leq 2 r$. When $\hat \o$ is sampled according to ${\rm PPP}(\l)$, this random graph is called Poisson Boolean model. In this case 
it is known that, for a suitable $\l_c\in (0,+\infty)$, $G_r(\hat \o)$  a.s. does not percolate for $\l<\l_c$ and $G_r(\hat \o)$  a.s.   percolates for $\l>\l_c$  (cf.~\cite{MR}).

 \rosso{As in \eqref{flautino1}, given $\o\in \O$ and $\g>0$, we set $\widehat{\o}_\g:=\{x\in \widehat{\o}\,:\, |E_x|\leq \g\}$.}

\subsection{Critical threshold $\z_c (\b,\r,\nu)$}\label{marino} We now derive the existence of $\z_c (\b,\r,\nu)$ in $ (0,+\infty)$ satisfying \eqref{critico}.  Sample $\o$ according to ${\rm PPP}[\r,\nu]$.
 Since  $\cG [\z,\b](\o)$ has vertex set $\widehat \o$ and edges $\{x,y\}$ with $x\not=y$ and $|x-y|+\b( |E_x|+|E_y|+|E_x-E_y|) \leq \z$,  $\cG [\z,\b](\o)$
 is contained in $G_{\z/2}(\widehat{\o})$ and contains  $G_{\z/10}(\widehat{\o}_{\z/5\b})$. 
 By varying $\o$ the above random graphs are Poisson Boolean models with deterministic radii $\z/2$, $\z/10$ and intensities $\rho$, $\rho \nu(\z/5\b)$, respectively.
  Moreover, they are mapped by the homothety  $\bbR^d\ni x\mapsto x/\z$ into Poisson Boolean models with deterministic radii $1/2$, $1/10$ and intensities $\rho \z^d $, $\rho \nu(\z/5 \b)\z^d$, respectively. Hence, by the above  discussed phase transition of the Poisson Boolean model, a.s. $G_{\z/2}(\widehat{\o})$ does not percolate for $\z$ small and  a.s.  $G_{\z/10}(\widehat{\o}_{\z/5\b})$ percolates for $\z$ large. 
 This proves that $\theta(\z,\b, \rho, \nu)$ 
 equals $0$  for $\z$ small and $1$ for $\z$ large.  
 Since  $\theta(\cdot,\b, \rho, \nu)$ is trivially non-decreasing and, as already mentioned, has value in $\{0,1\}$, we conclude that there exists  $\z_c (\b,\r,\nu)$ in $ (0,+\infty)$ satisfying \eqref{critico}.

\subsection{Critical intensities $\l_c(\a)$ and  $\l^+_c(\a)$}\label{sfondo} We now prove \orc{the} existence of values $\l_c(\a) $ and $\l_c^+(\a)$ as in Definition  \ref{fiordo}.
Let $\o$ be sampled according to $\text{PPP}[\l, \nu]$, where $\nu=\nu_{1,\a}^+$ or $\nu=\nu_{1,\a}$.
As discussed in Section \ref{marino},  $\cG[1,1](\o)$ is contained in $G_{1/2}(\widehat{\o})$ and contains  $G_{1/10}(\widehat{\o}_{1/5})$, which are 
  Poisson Boolean models with deterministic radii $1/2$ and $1/10$  and intensities $\l$ and  $\l \nu(1/5)$ respectively. Since $\nu(1/5)>0$, the conclusion follows from the   phase transition of the Poisson Boolean model and the property that  \orc{$ \theta(1,1, \cdot, \nu)$} is non-decreasing.

%

\subsection{Comparison of $\l_c(\a)$ and  $\l^+_c(\a)$ with the critical intensity  in \cite{FagMim1}} \label{liberato}
In order to apply the results of \cite{FagMim1} we show that 
\be\label{rinoceronte}
 {\rm PPP}^0(\l, \nu_{1,\a})\left( 0\leftrightarrow \infty\text{ in } \cG[1,1]\right) 
\begin{cases}
>0 &\text{ if } \l > \l_c(\a)\,,\\
=0 &\text{ if } \l < \l_c(\a)\,.
\end{cases} \en
Above ${\rm PPP}^0(\l, \nu_{1,\a})$ denotes the Palm distribution of  ${\rm PPP}(\l, \nu_{1,\a})$.
Moreover, a similar expression (with similar derivation) holds when replacing  $\nu_{1,\a}$ and $\l_c(\a)$ by $\nu^+_{1,\a}$ and $ \l^+_c(\a)$. As a consequence,  $\l_c(\a)$, $\l_c^+(\a)$ coincide with the critical intensity in \cite[Eq.~(1.5)]{FagMim1}. 

To shorten the notation let $\bbP:=  {\rm PPP}(\l, \nu_{1,\a})$, $\cA:=\{0 \leftrightarrow \infty\}$  and  $\cC:=\{ \cG[1,1](\o)\text{ percolates}\}$.  When 
$\l < \l_c(\a)$ we have  $\bbP(\cC)=0$. Since $\cC$ is translation invariant, by \cite[Lemma~7.1]{Fhom1}  we conclude that $\vpz(\cC)=0$, thus implying that $\vpz(\cA)=0$. If $\l > \l_c(\a)$, then $\bbP(\cC)=1$, thus implying that $\bbP(\cB)>0$ where $\cB:=\{\exists x\in \widehat{\o}\cap[0,1]^d\,:\, x\leftrightarrow \infty\}$ (since $\cC$ is a countable union of translations of $\cB$).
By Campbell's identity \eqref{campanello} with $f(x,\o):=\orc{\mathds{1}_{[0,1]^d}(x) \mathds{1}_\cA (\o )}$ and since $\bbP(\cB)>0$, we get
$
\vpz(\cA)=\frac{1}{\l} \int _\O d\bbP(\o) \sum_{x\in [0,1]^d} \mathds{1}(x \leftrightarrow \infty)\geq \frac{1}{\l}\bbP(\cB)>0$.

\section{Proof of Fact \ref{cortocircuito}}\label{pinco}
To get Fact \ref{cortocircuito}
it is enough \rot{to} apply  \cite[Thm.~1]{FagMim2} with structural function $h(a,b)=1-(|a|+|b|+|a-b|)$, probability  $P$ given by ${\rm PPP}[\l,\nu^+_{1,\a}]$ and distribution $\nu =\nu^+_{1,\a}$. To this aim we need to   check the  validity of the Assumptions (A1),...,(A5) in \cite[Section~2]{FagMim2}.  Since $\nu$ has support $[0,1]$ and for $a,b\in [0,1]$ it holds $h(a,b)=1-2 \max\{a,b\}$, the function $h$ is decreasing in $a,b$ varying in the support of $\nu$. This proves (A5). As discussed before Theorem 1 in \cite{FagMim2}, since $h$ is continuous, $\nu$ has bounded support and (A5)  is satisfied, (A4) is automatically satisfied.
(A1) and (A3) are trivially satisfied. The core is to check (A2), and our proof below is inspired by \cite[Section~8]{FagMim2}.  In our language, we need to show that there exist $\g\in (0,1)$ and \orc{$\l_*<\l $} such that $\cG[\g,1](\o)$ percolates for ${\rm PPP}[\l_*, \nu]$--a.a.~$\o$ \orc{(we recall that the $1$ in $\cG[\g,1](\o)$ is the inverse temperature)}.

By definition of $\l_c^+(\a)$, for any fixed $m> \l_c^+(\a)$ the graph $\cG[1, 1] (\o)$ percolates for  ${\rm PPP}[m,\nu]$--a.a. $\o$. 
Let us fix also  $\g\in (0,1)$. If $X$ is a random variable with law $\nu$, then   $\g X$ has law  $\nu_\g$ because of  the special form of $\nu_{1,\a}^+$ (see Definition \ref{san_benedetto}). Hence the marked SPP given by $\tilde \o:=\{ (\g x, \g E_x)\,:\, x\in \widehat \o\}$  with $\o$ sampled according to ${\rm PPP}[m,\nu]$ has law ${\rm PPP}[ \g^{-d} m, \nu_\g]$. On the other hand, when $\cG[1, 1] (\o)$ percolates, also $\cG[\g, 1](\tilde \o)$ percolates since  the map $(x,E_x) \mapsto (\g x, \g E_x)$ induces a vertex bijection between the two graphs
 and \orc{since, given $x\not= y$ in $\widehat{\o}$,   $\{x,y\}$ is an edge of of $\cG[1, 1] (\o)$ if and only if}  it holds 
$|\g x- \g y| + |\g E_x|+|\g E_y|+|\g E_x- \g E_y| \leq \g $.
Hence, by the above observations, \orc{$\cG[\g, 1] (\o)$} percolates a.s. when \orc{$\o$}  is sampled according to ${\rm PPP}[ \g^{-d} m, \nu_\g]$. \orc{In conclusion, given $m'>0$, $\cG[\g, 1] (\o)$ with $\o$ sampled by ${\rm PPP}[m',\nu_\g]$ percolates a.s. if $\g^d m'>\l_c^+(\a)$.}

Let us fix $\l_*\in (\l_c^+(\a),\l)$.
By Lemma \ref{lemma_2021}, if $\o $ is sampled by ${\rm PPP}[\l_*, \nu]$, then $\o_\g:=\{(x,E_x)\,: \, x\in \widehat\o,\, |E_x|\leq \g\}$
 has law ${\rm PPP} [\nu(\g) \l_*, \nu_\g]$.  \orc{In particular, since $\cG[\g,1](\o)$  contains $\cG[\g,1](\o_\g)$ and by the previous conclusion,
 to prove that  $\cG[\g,1](\o)$ a.s. percolates with $\o$ sampled by ${\rm PPP}[\l_*, \nu]$ it is enough to have  $\g^d \nu(\g) \l_*>\l_c^+(\a)$. As $ \l_*>\l_c^+(\a)$ and $\nu(\g)=\g^{\a+1}$, it is enough to take $\g$ enough close to $1$. This proves the validity of  (A2).}

\smallskip \noindent {\bf Acknowledgments}:  I thank \magenta{the referees for their careful reading and precious corrections and  suggestions. I also thank Muhammad Sahimi  and Hermann Schulz-Baldes for useful discussions concerning Mott v.r.h. and doped semiconductors.}

\magenta{Much of this work was written in the company of my beloved dog Ciak, who was a cherished member of my family and who passed away too soon. This work is dedicated to him, with infinite love and gratitude. }


\end{document}